\newtheorem{theorem}{Theorem}[section]
\newtheorem{lemma}[theorem]{Lemma}
\newtheorem{prop}[theorem]{Proposition}
\newtheorem{cor}[theorem]{Corollary}
\newtheorem{thm}[theorem]{Theorem}
\newtheorem{lem}[theorem]{Lemma}
\newtheorem{thmintro}{Theorem}
\newtheorem*{cor*}{Corollary}
\newtheorem*{thm*}{Theorem}
\newtheorem*{lem*}{Lemma}
\newtheorem*{prop*}{Proposition}
\theoremstyle{definition}
\newtheorem{defn}[theorem]{Definition}
\newtheorem{example}[theorem]{Example}
\newtheorem*{defn*}{Definition}
\theoremstyle{remark}
\newtheorem{remark}[theorem]{Remark}
\newcommand{\act}{\curvearrowright}
\newcommand{\cA}{\mathcal{A}}
\newcommand{\cB}{\mathcal{B}}
\newcommand{\cC}{\mathcal{C}}
\newcommand{\cG}{\mathcal{G}}
\newcommand{\cH}{\mathcal{H}}
\newcommand{\cK}{\mathcal{K}}
\newcommand{\cL}{\mathcal{L}}
\newcommand{\bC}{{\mathbb{C}}}
\newcommand{\bF}{{\mathbb{F}}}
\newcommand{\bH}{{\mathbb{H}}}
\newcommand{\bN}{{\mathbb{N}}}
\newcommand{\bZ}{{\mathbb{Z}}}
\newcommand{\bR}{{\mathbb{R}}}
\newcommand{\fE}{\mathfrak{E}}
\newcommand{\fI}{\mathfrak{I}}
\newcommand{\dT}{{\mathsf{T}}}
\newcommand{\vf}{{\varphi}}
\newcommand{\Aut}{{\rm Aut}}
\newcommand{\id}{\operatorname{id}}
\newcommand{\G}{\Gamma}
\title[]{Operator space complexification transfigured}
\author[D. P. Blecher]{David P. Blecher}
\address{Department of Mathematics\\ University of Houston\\USA}
\email{dpbleche@central.uh.edu}
\author[M. Kalantar]{Mehrdad Kalantar}
\address{Department of Mathematics\\ University of Houston\\USA}
\email{mkalantar@uh.edu}
\begin{document}

\begin{abstract} Given a finite group $G$, a central subgroup $H\le G$, and an operator space $X$ equipped with an action of $H$ by complete isometries, we construct an operator space $X_G$ equipped with an action of $G$ which is unique under a `reasonable' condition. This generalizes the operator space complexification $X_c$ of $X$. 
As a linear space $X_G$ is the space obtained from inducing the representation of $H$ to $G$ (in the sense of Frobenius). \end{abstract} 

\thanks{DB was supported by a Simons Foundation Collaboration Grant.  MK was supported by the NSF Grant DMS-2155162.}

\maketitle



\section{Introduction}
Complexification is a main technical tool in the study of real spaces $X$.  It produces a complex space $X_c$ canonically constructed from $X$, which also shares almost all structural properties with the original real space $X$.
ln particular, a common strategy 
with real spaces is to pass to the complexification. This is especially profitable for structures like $C^*$-algebras where the notion of positivity for instance is a significant aspect of the theory.

The construction of the complexification of $X$ at the algebraic level is very simple:\ $X_c$ is the direct sum of two copies of $X$, the `real' and the `imaginary' parts, and the module action of complex numbers on $X_c$ is canonically defined, similarly to  the construction of  the complex algebra $\bC$ from $\bR$. 
The more interesting component of this process is  the analytical one, namely the norming of the complexification $X_c$. This turns out to be rather non obvious for general spaces.
Obviously one would want to require that the norm of $X_c$ extends the original norm of $X$, equivalently, the embedding of $X\to X_c$ is isometric. Also, it is  `reasonable' to require that the conjugation map $(x, y) \mapsto (x, -y)$  be isometric as well. It turns out that there are in general infinitely many norms on $X_c$ satisfying both conditions for any given real normed space $X$ (take $\ell_p$-norms on the direct sum for instance).

In contrast, given a real operator space $X$, Ruan proved that there is always a unique operator space norm on the complexification $X_c$ such that both the embedding of $X$ into $X_c$ and the conjugation map are completely isometric \cite{RComp}. 
 This complexification is such a powerful tool that  
 it is completely natural to  reflect on its  
primal qualities, and in particular to ask \emph{is there a more general  concept with some potentially similar applications?} 

This is the main concern of this paper. We give an answer to these questions, by offering a novel framework which generalizes the complexification, as well as the less-studied notion of quaternification of real spaces.

At the purely algebraic level, our construction is an induction process in the sense of Frobenius (see e.g.~\cite{Mack}).
Similarly to the case of complexification, the more interesting aspect of the theory is the analytical one.  
 Our inducing is the natural one for the category of operator spaces.

To get into more specifics of our notion, let us begin with the case of constructing $\bC$ from $\bR$. Consider the action of $\bZ_2$ on $\bR$ by change of sign $x\mapsto -x$. Then $\bC = {\rm Ind}_{\bZ_2}^{\bZ_4}(\bR)$ is the space of the induced representation from $\bZ_2$ to $\bZ_4$.
Then, the conjugation map on $\bC$ is the canonical action of the group of automorphisms of $\bZ_4$ who restrict to identity on $\bZ_2$, and the norm of $\bC$ is the one defined by the representation of ${\rm Ind}_{\bZ_2}^{\bZ_4}(\bR)$ by convolution operators.

A similar construction yields the quaternion algebra $\bH$ of from $\bR$; indeed $\bH={\rm Ind}_{\bZ_2}^{Q_8}(\bR)$, and all structural properties of $\bH$ have a natural representation theoretical interpretation.

In our general setup, given a (central) subgroup $H$ of a (finite) group $G$, and certain $H$-operator spaces, which we call $H$-modular (see Definition \ref{modu}), 
we construct a $G$-modular operator space $X_G$, which we call the $G$-{\em ification} of $X$. As a vector space, $X_G$ is canonically isomorphic  to ${\rm Ind}_H^G(X)$, and we prove 
that it admits a unique `reasonable' operator space norm (see Definition \ref{def:conc-Gifr}).  

\begin{thmintro} \label{int}
Assume that $G$ is finite, $H\le Z(G)$ and that the pair $(G, H)$ is admissible (see Definition {\rm \ref{def:erg+good}}).
Then every real or complex faithful $H$-ergodic central $H$-modular operator space $X$ admits a unique reasonable operator space $G$-ification.
\end{thmintro}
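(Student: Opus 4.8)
\emph{Strategy and existence.} The plan is to get existence from an explicit concrete model and uniqueness from a rigidity argument that generalizes Ruan's for the operator space complexification (and should reduce to it when $(G,H)=(\bZ_4,\bZ_2)$, as well as cover the quaternification when $(G,H)=(Q_8,\bZ_2)$). Recall that, as a vector space,
\[
\Ind_H^G(X)\;=\;\{\,f\colon G\to X\ :\ f(gh)=h^{-1}\!\cdot f(g)\ \text{ for all }g\in G,\ h\in H\,\},
\]
with $G$ acting by $(g_0\cdot f)(g)=f(g_0^{-1}g)$; inside it $X$ sits $H$-equivariantly as the functions supported on $H$, with evaluation at $e\in G$ as a left inverse. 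To norm this I would use central $H$-modularity of $X$ to fix a completely isometric representation $X\subseteq B(\cH)$ together with a unitary representation $v\colon H\to B(\cH)$ implementing the $H$-action, $h\cdot x=v_h\,x\,v_h^{*}$. Inducing the Hilbert space, put $\cK=\Ind_H^G(\cH)$ with the unitary $G$-representation $w=\Ind_H^G(v)$, and let $X_G\subseteq B(\cK)$ be the image of $\Ind_H^G(X)$ under the natural convolution representation of the induced space — the one generalizing Ruan's realization of $X_c$ inside $M_2(X)$ — under which $w$ implements the $G$-action by conjugation. One then verifies that this representation is faithful on $\Ind_H^G(X)$, that $X\hookrightarrow X_G$ is a complete isometry, and that evaluation at $e$ is a completely contractive $H$-module projection $X_G\to X$, so that $X_G$ is a reasonable $G$-ification; here faithfulness of $X$ and admissibility of $(G,H)$ are what keep the convolution representation non-degenerate and faithful.

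\emph{Uniqueness.} Let $Y$ be any reasonable $G$-ification of $X$; I would show the identity map of $\Ind_H^G(X)$ is a completely isometric $G$-isomorphism $Y\to X_G$. Since $H\le Z(G)$, the $G$-action on $Y$ commutes with the $H$-action, and every $G$-translate $g\cdot X\subseteq Y$ ($gH\in G/H$) carries the same $H$-module structure as $X$; hence the completely contractive $H$-module projection $Y\to X$ provided by the reasonableness data restricts on each translate to a scalar (using $H$-ergodicity, i.e.\ triviality of the $H$-endomorphisms of $X$), and the $\Aut(G;H)$-symmetry of $Y$ forces that scalar to vanish off the trivial coset. Consequently the $G$-translates of that projection are exactly the coordinate projections $\tilde E_{gH}\colon Y\to g\cdot X$ of the decomposition $Y=\Ind_H^G(X)=\bigoplus_{gH}g\cdot X$; they are completely contractive with sum the identity, which gives one of the two needed matrix-norm estimates. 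For the reverse estimate I would pass to spatial models of $Y$ in which both the $H$-action and the $G$-action are implemented by unitaries (possible since $G$ is finite), recognize $Y$ inside the same induced convolution picture that defines $X_G$, and read off a $G$-equivariant complete contraction in the opposite direction; admissibility of $(G,H)$ is used to synchronize the two estimates, so that the maps are mutually inverse and $Y\cong X_G$ completely isometrically and $G$-equivariantly.

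\emph{Main obstacle.} The crux is the reverse estimate, i.e.\ the genuinely operator-space rigidity: one must extract from the abstract reasonableness axioms that there is no residual freedom in the matrix norms coupling the different $G/H$-coordinates — equivalently, the distinct $\widehat H$-isotypes occurring in $\Ind_H^G(\,\cdot\,)$. This is precisely the phenomenon separating the non-unique Banach-space complexifications from the unique operator-space one, now played out in the $G/H$-graded setting, and it is the job of admissibility to make the underlying representation-theoretic bookkeeping — how $\Ind_H^G(\chi)$ decomposes over $\widehat H$, and the averaging constants that go with it — close up consistently. Existence, by contrast, should be routine once the spatial implementation of the $H$-action coming from modularity is in hand.
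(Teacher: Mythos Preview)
Your outline has the right shape for existence (induce the Hilbert space, represent $X_G$ by convolution, check the $\Aut_H(G)$-action is implemented by unitaries on $\cH_G$), though two details are off: first, central $H$-modularity means $\rho(h\cdot x)=\theta(h)\rho(x)=\rho(x)\theta(h)$, i.e.\ left multiplication by a unitary commuting with $\rho(X)$, not conjugation $v_hxv_h^{*}$ (which would be trivial here); second, admissibility is \emph{not} used for existence --- faithfulness of the convolution representation is elementary --- it enters only in uniqueness.

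The genuine gap is in uniqueness. Your step ``the projection $Y\to X$ restricted to $g\cdot X$ is a scalar by $H$-ergodicity'' fails: $H$-ergodicity only says the $H$-fixed subspace of $X$ is $\{0\}$, not that $H$-intertwiners $X\to X$ are scalars. Already for $H=\bZ_2$ acting by sign on $X=\bR^n$ every linear map is an $H$-intertwiner, so Schur-type reasoning is unavailable. And your reverse estimate (``recognize $Y$ inside the same convolution picture'') is exactly what has to be proved; passing to a spatial model with $G$ unitarily implemented does not pin down the matrix norms on the $G/H$-graded sum.

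The paper's mechanism is quite different and is what you are missing. Given a reasonable $G$-ification $(V,\pi,\Pi,\cK)$, one embeds $\Pi(X)_G$ into the $G$-ification $\cA_G$ of the commutant $\cA=\pi(G)'$ (a $C^*$-algebra), and defines the ``multiplication'' map
\[
\mathfrak m\colon \cA_G\longrightarrow B(\cK),\qquad \mathfrak m(\Psi)=\tfrac{1}{|H|}\sum_{g\in G}\Psi(g)\,\pi(g),
\]
which is a $*$-homomorphism, hence completely contractive. Admissibility is used exactly once, via the identity $\sum_{\sigma\in\Gamma}\pi(\sigma(g))=0$ for $g\notin H$ (a short lemma using ergodicity), to show that $\bigoplus_{\sigma\in\Gamma}\mathfrak m\circ\sigma$ is \emph{injective} on $\cA_G$. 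Hence $\|\Psi\|=\sup_\sigma\|\mathfrak m(\Psi\circ\sigma)\|$, and reasonableness makes all these equal to $\|\mathfrak m(\Psi)\|$ on $\Pi(X)_G$, so $\mathfrak m$ is completely isometric there. That is the rigidity step; your proposal does not contain an analogue of it.
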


This theorem generalizes Ruan's uniqueness theorem from \cite{RComp}, which even in the case of quaternification is new.
On the other hand, the above theorem, from the representation theoretical side,
may also be interpreted  as an imprimitivity theorem in the sense of the theory of induced group representations,
particularly in conjunction with the results in Section 7 below.

We turn to the structure of our paper.   
In Section \ref{Gif} we discuss the $G$-ification at the algebraic level, and prove
some technical results concerning these.   
Some proofs in this part are rather straightforward, as the reader will notice, we include the arguments both for the sake of completeness, and more importantly, to highlight that all our constructions and  steps en route to our analytic setup of Definition~\ref{modu} are completely natural and canonical. 
Then in the second half of Section \ref{Gif} we use the latter mentioned structure to  define a representation of the $G$-ification by means of convolution operators on the induced Hilbert space, and this gives 
 an operator space structure on the $G$-ification.  

In Section \ref{maith}  we prove our main theorem, Theorem \ref{int} above, for `concrete $G$-ifications'. 

In Section \ref{absc} we briefly  give several characterizations of $H$-modular operator spaces $X$. The novel feature here is that we are transferring the concept of operator modules in the sense of  
Christensen-Effros-Sinclair (see e.g.\ \cite[Theorem 3.3.1]{BLM})  to a class of abstract operator spaces with a group action. We have not hitherto seen this in the literature. We also transfer to such spaces a couple of important aspects of the theory of operator modules (e.g.\ relating to operator space multiplier algebras, see Chapters 3 and 4 in \cite{BLM}
  in the complex case (the real case works out almost identically as noted in \cite{BReal}). We then define  abstract  $G$-ifications of modular operator spaces, and we state Theorem \ref{int} above in this setting.

In Section \ref{applns}  we give a matrix representation in $M_m(X)$ of the $G$-ification of $X$ matching Ruan's  observation that the complexification $X_c$  of $X$ is representable completely isometrically as the operator subspace of $M_2(X)$ 
consisting of matrices of the form 
\begin{equation} \label{ofr} \begin{bmatrix} 
       x    & -y \\ 
       y   & x 
    \end{bmatrix} 
    \end{equation} 
    for $x, y \in X$.    
    Thus the $G$-ification norm becomes the usual operator space norm on $M_m(X)$ restricted to a subspace of matrices. 
    This is often enormously helpful in computations with the complexification. 
We then give several applications of the uniqueness theorem
(Theorem~\ref{int}), identifying the operator space $G$-ifications of various spaces related to the original $H$-modular space $X$.
Some of these are generalizations of facts about the complexification, 
others generalize facts from the theory of induced representations.

Section~\ref{sec:mod} is devoted to the discussion of the extended module action on the $G$-ification. We should note that the primary objective of the complexifying a real space $X$ is to obtain a space on which $\bC$ acts canonically. Put in our framework, this is the canonical module action given by the span of operators defined by the $\bZ_4$-action on $\bR_{\bZ_4}=\bC$. 
In the general case of $G$-ifications of $H$-modular spaces, we will face several natural choices for extending this fact, and norming the algebra that acts on the $G$-ification. We prove that all of these choices yield the same $C^*$-algebra.

Section~\ref{sec:Gified} is concerned with the question of for a given $H\le Z(G)$, which $G$-modular operator spaces $Y$ are $G$-ifications of $H$-modular operator spaces $X$? 
These may be regarded as analogues of the classical imprimitivity theorem.

Finally in Section~\ref{sec:ex} we discuss several concrete examples of $G$-ifications. 

For background and preliminary material used in this work, we refer the reader to \cite{BLM,ER,Pisbk} for the operator space aspects, and for real operator spaces 
and their complexification we refer to \cite{RComp,BReal} and references therein.

\section{The $G$-ification} \label{Gif} 
In this section we introduce and study the notion of $G$-ification $X_G$ of an
operator space  $X$, where $G$ is a group containing $H$ as a subgroup.
 At the basic algebraic level, the $G$-ification $X_G$ as a vector space coincides with the induced space ${\rm Ind}_H^G(X)$. 
The main structural aspect of $X_G$ is 
its operator space structure, and also the order structure in the case where $X$ is an operator system or a $C^*$-algebra.
However we begin with the the linear structure of the $G$-ification before turning our attention to the analytical side of the theory.

\subsection{The algebraic constructions} 
Let $X$ be a vector space over a field $\bF$ (which is usually $\bR$ or $\bC$). Let $H$ be a group of invertible linear maps on $X$. Let $G$ be a group containing $H$ as a subgroup.
\subsubsection{Linear structure} \label{Linstr}  

We begin with many definitions and notation that are used in the paper.
We first recall the notion of \emph{induced representations} in the sense of Frobenius et al.
Let 
\[
X_G:=\{\vf: G\to X \mid \vf(gh) = h^{-1}\vf(g) ~~ \forall g\in G, h\in H\} .
\]
(This is often written as ${\rm Ind}_H^G(X)$ in the literature.) Then $G$ acts on $X_G$ by left translations:
\[
(g\cdot \vf) (k) := \vf(g^{-1}k) ~~~~~~~ \forall g, k\in G, \vf\in X_G .
\]

So, each $g\in G$ defines an invertible linear map $\pi_X(g)$ on $X_G$, and the map $\pi_X$ is a representation of $G$ on $X$. 

If $H\le Z(G)$, then $G$ also acts on $X_G$ by right translations $(\vf\cdot g) (k) := \vf(kg)$, 
$g, k\in G$, $\vf\in X_G$.  This  yields a representation  $\pi_X^r$ 
of $G$ on $X_G$ commuting with $\pi_X$.   

Denote $\cG^\ell:={\rm span}\{\pi_X(g) : g\in G\}$. Then $\cG^\ell$ is an algebra of maps on $X_G$, and $X_G$ is a left $\cG^\ell$-module.
Similarly, if $H\le Z(G)$, we have the algebra $\cG^r:={\rm span}\{\pi_X^r(g) : g\in G\}$, and $X_G$ is a $\cG^\ell$-$\cG^r$-bimodule if 
$\cG^r$ is given its reversed multiplication.
We denote these module actions by $\alpha\cdot \vf$ and $\vf\cdot\beta$ for $\alpha\in \cG^\ell$, $\beta\in \cG^r$, and $\vf\in X_G$.
We also denote by $\mathsf{J}:\cG^\ell\to\cG^r$ the linear extension of the map $\pi_X(g)\to \pi_X^r(g^{-1})$. We will see later that this is
a well defined $*$-isomorphism. 

Given $x\in X$, define $\vf_x: G\to X$ by
\[
\vf_x(g) := \left\{
\begin{array}{lr}
        g^{-1}x & \text{if } g\in H\\
        0 & \text{if } g\notin H 
    \end{array}
\right.
\]
Then $\vf_x\in X_G$ and the map $\fE_X \colon x\mapsto \vf_x$ is an $H$-equivariant $\bF$-linear embedding of $X$ into $X_G$.

Note that if $\pi_X(g)(\vf_x) = \vf_x$ for an element $g\in G$ and a non-zero $x\in X$, then $g\in H$ and $gx = x$. In particular, the representation $\pi_X$ of $G$ is faithful.

Let $X$ and $Y$ be vector spaces with faithful actions of $H$ by invertible linear maps.
We say a linear map $T: X\to Y$ in an \emph{$H$-intertwiner} if $T(hx) = hT(x)$ for all $h\in H$ and $x\in X$.
We denote by $\cL(X, Y)^H$ the space of all linear $H$-intertwiners from $X$ to $Y$. Note that $\cL(X)^H$ is the commutant of $H$ in $\cL(X)$, and is a subalgebra of $\cL(X)$.
If $T: X\to Y$ is an $H$-intertwiner, then for every $\vf\in X_G$, $T\circ \vf \in Y_G$.
Indeed, for every $g\in G$ and $h\in H$, we have
\[
T \big(\vf(gh)\big) = T \big(h^{-1}\vf(g)\big) = h^{-1}T \big(\vf(g)\big) .
\]
Thus, we get a map $T_G : X_G \to Y_G$ defined by
\begin{equation}\label{def:map-Gif}
T_G(\vf) := T\circ \vf ~~~~~ (\vf\in X_G),
\end{equation}
which we call the $G$-{\em ification} of $T$.

Denote $\Aut_H(G)=\{\sigma\in\Aut(G) : \sigma(h) = h \text{ for all } h\in H\}$ for the group of all automorphisms of $G$ that restrict to the identity on $H$.

We observe that for every $\vf\in X_G$ and $\sigma\in \Aut_H(G)$, $\vf\circ \sigma\in X_G$.
In particular, we have a (left) action $\Aut_H(G)\act X_G$ given by $\sigma\cdot \vf := \vf\circ \sigma^{-1}$. 
This action plays a crucial role in determining the operator space structure of the $G$-ification.

{
\begin{example}[{\bf Complexification}]\label{ex:comp}
Let $H=\bZ_2$ and $G=\bZ_4$. 
Let $X$ be a real vector space, and consider the action of $H$ on $X$ defined by $x\mapsto -x$. Then $X_G \cong X_c$ is the complexification of $X$ via the identification $X_G\ni \vf \mapsto \vf(0) + i \vf(1) \in X_c$.

The algebra $\cG^\ell= \cG^r$ is the algebra $\bC$, and the module actions are the scalar product of $\bC$.

A map $T:X\to Y$ between real vector spaces is $H$-intertwiner if and only if it is $\bR$-linear. In this case the map $T_G:X_G\to Y_G$ coincides with the complexification $T_c: X_c\to Y_c$ of $T$ defined by $T_c(x+iy) = T(x)+iT(y)$.

We have $\Aut_H(G)=\{\id, \sigma\}$, where $\sigma(1)=3$. The action of $\sigma$ on $X_G$ corresponds to the conjugation map $x+iy \mapsto x-iy$ on $X_c$.
\end{example}


\begin{example}[{\bf Quaternification}]\label{ex:quat}
Let $H=\bZ_2$ and $G=Q_8$. 
As in the previous example, let $X$ be a real vector space, and consider the action of $H$ on $X$ defined by $x\mapsto -x$. 
Then $X_G = X_\bH$ is the quaternification of $X$ (see \cite{Ng}) via the identification $X_G\ni \vf \mapsto \vf(0) + i \vf(i) + j\vf(j) + k \vf(k)\in X_\bH$.

The algebra $\cG^\ell=\cG^r$ is the Hamilton quaternions $\bH$, and the module actions are the 
usual product of $\bH$.

A map $T:X\to Y$ between real vector spaces is $H$-intertwiner if and only if it is $\bR$-linear. In this case the map $T_G:X_G\to Y_G$ coincides with the quaternification $T_\bH: X_c\to Y_c$ of $T$ defined by $$T_\bH(x+iy+jw+kz) = T(x)+iT(y)+jT(w)+kT(z).$$
We have $\Aut_H(G)=\Aut(G)\cong S_4$ in this (quaternionic) situation. \end{example}
}


\begin{defn}\label{def:erg+good}
We say the action of $H$ on $X$ is \emph{ergodic} (or that $X$ is $H$-ergodic) if 0 is the unique $H$-fixed vector in $X$.

We say the pair $(G, H)$ is \emph{admissible} if there is a subgroup $\G$ of $\Aut_H(G)$ such that $H\subseteq \{g^{-1}\sigma(g) : \sigma\in \G\}$ for every $g\in G\setminus H$.
In this case, we say that the pair $(G, H)$ is $\G$-{admissible}.
\end{defn}
Obviously, for any real vector space $X$, the action $\bZ_2\act X$, $x\mapsto -x$ is ergodic. Also, for any complex vector space $X$, and any $n\in \bN$, the action $\bZ_n\act X$, $k\cdot x := e^{\frac{2k\pi i}{n}}x$ is ergodic.

We thank Nico Spronk for suggesting a link between our admissible criterion, and the commutator subgroup of $G$. Indeed if $\Gamma$ contains the inner automorphisms then 
$\{g^{-1}\sigma(g) : \sigma\in \G\}$ contains the commutators $[g,k]$ for $g, k \in G$.  

Note that if the pair $(G, H)$ is $\G$-admissible for some $\G\le \Aut_H(G)$, then it is also $\Aut_H(G)$-admissible. But in practice, sometimes a smaller $\G$ suffices to guarantee admissibility. For instance, the pair $(Q_8, \bZ_2)$ is $\G$-admissible, where $\G={\rm Inn}(Q_8)$ is the subgroup of inner automorphisms of $Q_8$. The group ${\rm Inn}(Q_8)$ has order 4, while $\Aut(Q_8)$ has order 24.
 
One can see that the following pairs are $\Aut_H(G)$-admissible, by a simple inspection of the automorphism groups in each case: $(\bZ_{2^n}, \bZ_2)$ for every $n\in\bN$, 
 $(D_4, Z(D_4))$, and 
$(\bZ_{p^2}, \bZ_p)$ for any prime $p$. 
On the other hand, the pairs $(\bZ_6, \bZ_2)$ and $(\bZ_6, \bZ_3)$ are not admissible. More generally, if $m, n\in \bN$ are such that $\gcd(m+1, n) \neq 1$, then the pair $(\bZ_{mn}, \bZ_m)$ is not admissible.

We use the following fact in the proof of our main result Theorem \ref{thm:Gif-unq}.

\begin{lem}\label{lem:sumzero}
Suppose $H$ is a normal subgroup of $G$ such that $(G, H)$ is $\G$-admissible for a finite subgroup $\G$ of $\Aut_H(G)$.
 If $H$ acts ergodically on $X$, then for every $g\in G\setminus H$,
\[
\sum_{\sigma\in \G} \pi_X(\sigma(g)) = 0 .
\]
\end{lem}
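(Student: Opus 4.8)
The plan is to fix $g\in G\setminus H$ and look at the vector
\[
v := \sum_{\sigma\in\G} \pi_X(\sigma(g))
\]
as an element of $\cG^\ell\subseteq\cL(X_G)$, and to show it annihilates $X_G$; since $\pi_X$ is faithful (as noted in the excerpt, and since $\cG^\ell$ acts on $X_G$), it suffices to show $v\cdot\vf = 0$ for every $\vf\in X_G$, equivalently $v$ is the zero operator. First I would observe that $\G$ acts on the coset $gH$: indeed each $\sigma\in\Aut_H(G)$ fixes $H$ pointwise, hence sends $gH$ to $\sigma(g)H$, and admissibility says $H\subseteq\{g^{-1}\sigma(g):\sigma\in\G\}$, so as $\sigma$ ranges over $\G$ the elements $\sigma(g)$ hit every element of $gH$ — with some multiplicity $m$ that is constant by a standard orbit-counting argument (the stabilizer in $\G$ of $g$ within the $\G$-set $gH$ has fixed index). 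Thus
\[
\sum_{\sigma\in\G}\pi_X(\sigma(g)) = m\sum_{h\in H}\pi_X(gh).
\]

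Next I would compute $\sum_{h\in H}\pi_X(gh)$ applied to an arbitrary $\vf\in X_G$. Using the definition $(\pi_X(k)\vf)(x)=\vf(k^{-1}x)$ we get, for $x\in G$,
\[
\Bigl(\sum_{h\in H}\pi_X(gh)\vf\Bigr)(x) = \sum_{h\in H}\vf(h^{-1}g^{-1}x) = \sum_{h\in H} h\,\vf(g^{-1}x),
\]
where the last equality uses the defining relation $\vf(kh)=h^{-1}\vf(k)$ of elements of $X_G$ (with $k=g^{-1}x$, after replacing $h$ by $h^{-1}$; here I use that $H$ is central in $G$ so $h^{-1}g^{-1}x = g^{-1}xh^{-1}$). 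For each fixed $x$, the element $\vf(g^{-1}x)\in X$, and $\sum_{h\in H} h\,\vf(g^{-1}x)$ is an $H$-fixed vector in $X$ (averaging over the group $H$ produces a fixed vector, since left-multiplying the sum by any $h_0\in H$ just permutes the summands). By $H$-ergodicity of $X$, the only $H$-fixed vector is $0$, so this sum is $0$ for every $x$. Hence $\sum_{h\in H}\pi_X(gh)\vf = 0$, and therefore $v\cdot\vf = m\cdot 0 = 0$.

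The bookkeeping about the multiplicity $m$ and the centrality manipulations are genuinely routine; the one point that deserves care — and which I expect to be the crux — is making the two uses of the structure honest: that admissibility really forces the $\G$-orbit of $g$ to be all of $gH$ (not merely to meet a proper coset), and that, once we have the average $\sum_{h\in H} h\,(\,\cdot\,)$ on $X$, ergodicity kills it. The first is exactly the content of $H\subseteq\{g^{-1}\sigma(g):\sigma\in\G\}$: given $h\in H$, choosing $\sigma$ with $g^{-1}\sigma(g)=h$ gives $\sigma(g)=gh$, so every element $gh$ of the coset appears among the $\sigma(g)$. So in fact the cleanest route avoids the multiplicity entirely: rewrite $\sum_{\sigma\in\G}\pi_X(\sigma(g))$ by grouping the $\sigma$'s according to the value of $\sigma(g)\in gH$, obtaining a sum $\sum_{h\in H} c_h\,\pi_X(gh)$ with all $c_h$ equal to the common fibre size $|\{\sigma\in\G:\sigma(g)=gh\}|$ — equal because if $\sigma_0(g)=g$ and $\sigma_1(g)=gh$ then $\sigma\mapsto\sigma_1\sigma$ is a bijection between the two fibres — and then apply the ergodicity computation above to $\sum_{h\in H}\pi_X(gh)$. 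This yields the claimed identity $\sum_{\sigma\in\G}\pi_X(\sigma(g))=0$.
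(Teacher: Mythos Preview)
Your argument contains a genuine gap: you implicitly assume that $\sigma(g)\in gH$ for every $\sigma\in\Gamma$, but this need not hold. An element $\sigma\in\Aut_H(G)$ fixes $H$ pointwise, yet it need not preserve the coset $gH$; it sends $gH$ to $\sigma(g)H$, and nothing forces $g^{-1}\sigma(g)\in H$. For instance, with $G=Q_8$, $H=\{\pm1\}$, and $\Gamma=\Aut(Q_8)$, the orbit of $i$ is all of $\{\pm i,\pm j,\pm k\}$, not just $iH=\{i,-i\}$. Hence your displayed identity $\sum_{\sigma\in\Gamma}\pi_X(\sigma(g))=m\sum_{h\in H}\pi_X(gh)$ is in general false, and the ``cleanest route'' at the end, which groups the $\sigma$'s according to the value $\sigma(g)\in gH$, simply ignores those $\sigma$ with $\sigma(g)\notin gH$.

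The repair is short and stays within your approach. Applying admissibility not only to $g$ but to each $g':=\sigma_0(g)\in G\setminus H$ shows that the $\Gamma$-orbit of $g$ is a union of full $H$-cosets, and your fibre-size argument (which really computes $|\mathrm{Stab}_\Gamma(g)|$) shows the multiplicity is constant across the whole orbit. Then $\sum_{\sigma\in\Gamma}\pi_X(\sigma(g))$ becomes $m$ times a sum of blocks $\sum_{h\in H}\pi_X(g'h)$, one per coset $g'H$ in the orbit, and your ergodicity computation kills each block. This is essentially what the paper does: rather than decomposing into cosets, it shows directly that the list $(\sigma(g))_{\sigma\in\Gamma}$ is $H$-invariant with constant multiplicities, so that $\sum_\sigma(\pi_X(\sigma(g))\varphi)(k)$ is an $H$-fixed vector in $X$, hence zero. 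One further minor point: you invoke centrality of $H$ to commute $h^{-1}$ past $g^{-1}x$, but the lemma only assumes normality; the computation still goes through by writing $h^{-1}g^{-1}x=(g^{-1}x)h'$ with $h'\in H$ a conjugate of $h^{-1}$ and reindexing the sum over $H$.
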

\begin{proof}
We observe that since $(G, H)$ is $\G$-admissible, the set $\{\sigma(g) : \sigma\in \G\}$ is $H$-invariant for every $g\in G\setminus H$.  Indeed, since $\sigma(g)  \in G\setminus H$ we have $\sigma(g)h =  \sigma'(g)$ for some $\sigma' \in \G$.

Moreover, for any fixed $g\in G$ and $k\in \{\sigma(g) : \sigma\in \G\}$, we have $\#\{\sigma\in \G : \sigma(g) = k\} = |\G\cap \Aut_{H\cup\{g\}}(G)|$. It is then 
easy to see that for any $h, h^\prime\in H$, $\{\sigma(g)h\}_{\sigma\in \G}$ and $\{\sigma(g)h^\prime\}_{\sigma\in \G}$ are the same list of elements appearing exactly the same number of times.

Thus, it follows for every $g\in G\setminus H$, $h\in H$, $k\in G$, and $\varphi\in X_G$,
\begin{align*}
&\sum_{\sigma\in \G} h\cdot[\big(\pi_X(\sigma(g))(\varphi)\big)(k)] = 
\sum_{\sigma\in \G} \varphi(\sigma(g)^{-1}kh^{-1}) \\=&
\sum_{\sigma\in \G} \varphi(\sigma(g^{-1})(kh^{-1}k^{-1})k) = 
\sum_{\sigma\in \G} \varphi(\sigma(g^{-1})k) , 
\end{align*}
which shows that $\sum_{\sigma\in \G} \big(\pi_X(\sigma(g))(\varphi)\big)(k)$ is an $H$-fixed point in $X$, hence is zero by the ergodicity assumption.
\end{proof}

For $\G\le\Aut_H(G)$, let $(X_G)^{\G}:= \{\vf\in X_G \mid \vf\circ\sigma=\vf ~\text{for all}~ \sigma\in \G\}$ be
the space of $\G$-fixed points of $X_G$.

\begin{prop} \label{ifgFP} 
Assume that the action of $H$ on $X$ is ergodic and the pair $(G, H)$ is $\G$-admissible for some $\G\le \Aut_H(G)$. Then
\[
\fE_X(X) = (X_G)^{\G} .
\]
\begin{proof}
Let $x\in X$, $\sigma \in \G$, and let $g\in G$. If $g\in H$, then $(\vf_x\circ\sigma) (g) = \vf_x(g)$ since $\sigma$ restricts to identity on $H$, and if $g\notin H$ then $\sigma(g)\notin H$ and hence $\vf_x(\sigma (g)) = 0 = \vf_x(g)$ by the definition of $\vf_x$. This shows $\fE_X(X) \subseteq (X_G)^{\G}$.

Conversely, assume $\vf\in (X_G)^{\G}$, and let $x_0 = \vf(e)$. Then, for every $h\in H$ we have
\[
\vf(h) = \vf(eh) = h^{-1}\vf(e) = 
\vf_{x_0}(h) .
\]
On the other hand, if $g\in G\setminus H$, then since the pair $(G, H)$ is $\G$-admissible, for each $h \in H$ there exists $\sigma\in \G$ such that $\sigma(g) = gh$. Then,
\[
h^{-1} \vf(g) = \vf(gh) = \vf(\sigma(g)) = \vf(g) .
\]
This shows that $\vf(g)$ is an $H$-fixed element of $X$, hence $\vf(g)= 0$ by the ergodicity assumption. Thus, $\vf = \vf_{x_0}$.
\end{proof}
\end{prop}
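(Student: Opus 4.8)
The plan is to establish the two inclusions $\fE_X(X)\subseteq (X_G)^{\G}$ and $(X_G)^{\G}\subseteq \fE_X(X)$ separately; both are elementary manipulations of the defining relations, and the hypotheses on $X$ and on the pair $(G,H)$ enter only in the second one.

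For the inclusion $\fE_X(X)\subseteq (X_G)^{\G}$, I would fix $x\in X$ and $\sigma\in\G$ and verify $\vf_x\circ\sigma=\vf_x$ by evaluating at an arbitrary $g\in G$. The one point to record is that an automorphism of $G$ restricting to the identity on $H$ maps $H$ onto $H$, hence also $G\setminus H$ onto $G\setminus H$; thus for $g\in H$ both sides equal $g^{-1}x$ (since $\sigma(g)=g$), while for $g\notin H$ both sides vanish by the two-case definition of $\vf_x$.

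For the reverse inclusion I would take $\vf\in(X_G)^{\G}$, set $x_0:=\vf(e)$, and prove $\vf=\vf_{x_0}$. On $H$ this is forced by the defining relation of $X_G$ applied with $g=e$: $\vf(h)=\vf(eh)=h^{-1}\vf(e)=h^{-1}x_0=\vf_{x_0}(h)$. For $g\in G\setminus H$, I would invoke $\G$-admissibility to produce, for each prescribed $h\in H$, an automorphism $\sigma\in\G$ with $\sigma(g)=gh$; then
\[
h^{-1}\vf(g)=\vf(gh)=\vf(\sigma(g))=(\vf\circ\sigma)(g)=\vf(g),
\]
so $\vf(g)$ is fixed by every element of $H$, and the ergodicity hypothesis forces $\vf(g)=0=\vf_{x_0}(g)$. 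This yields $\vf=\vf_{x_0}\in\fE_X(X)$.

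I do not expect a serious obstacle here: the proposition is a purely algebraic identification of the image of the canonical embedding $\fE_X$ with a fixed-point space. The only things to handle with a little care are the bookkeeping about $\G\le\Aut_H(G)$ — that it preserves the partition $G=H\sqcup(G\setminus H)$ in the first inclusion, and that $\G$-admissibility supplies the automorphism sending $g$ to $gh$ in the second — together with a clean use of ergodicity to annihilate the values of $\vf$ off $H$.
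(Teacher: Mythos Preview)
Your proposal is correct and follows essentially the same argument as the paper: both inclusions are proved exactly as you describe, using that $\sigma\in\G$ fixes $H$ pointwise and preserves $G\setminus H$ for the first, and admissibility together with ergodicity for the second.
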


{

\begin{example} In the case of complexification of a real space $X$, that is when $H=\bZ_2$ and $G=\bZ_4$, $\fE_X(X)$ corresponds to the canonical embedding of $X$ into $X_c$, $x\mapsto x+i0$. 
And Proposition~\ref{ifgFP} is the fact that the range of the latter embedding is the fixed point space of the canonical period 2 conjugate linear automorphism on $X_c$.

Similar observations hold in the quaternification case.
\end{example}

}


\subsubsection{Involutions}

In the setup of the previous section, we further assume that $X$ has an involution $\ast$, that is a period-2 automorphism of $X$.
Then there is a canonical involution on $X^G$, the space of all functions from $G$ to $X$, defined by $\vf^\ast(g) := \vf(g^{-1})^\ast$ for all  $\vf\in X^G$ and $g\in G$. 
The subspace $X_G$ is not in general self-adjoint.

\begin{defn}\label{def:adj}
We say that the left action of $H$ on $X$ is \emph{left-involutive} 
if $(h\cdot x)^* = h^{-1}\cdot x^*$ for every $h\in H$ and $x\in X$.  
\end{defn}


\begin{prop}\label{prop:X_G-selfadj}
Assume the action of $H$ on $X$ is \emph{left-involutive}.
If $H\le Z(G)$ then the formula $\vf^\ast(g) := \vf(g^{-1})^\ast$ 
defines an involution on $X_G$. In this case, the map $\fE_X \colon X\to X_G$ is $*$-linear. \begin{proof}
Let $\vf\in X_G$, $g\in G$ and $h\in H$. We have
\begin{align*}
\vf^\ast(gh) &= \vf(h^{-1}g^{-1})^\ast = \vf(g^{-1}h^{-1})^\ast 
\\&= \big(h\cdot\vf(g^{-1})\big)^\ast = h^{-1}\cdot\vf(g^{-1})^\ast = h^{-1}\cdot\vf^\ast(g) .
\end{align*}
This implies $\vf^*\in X_G$. The mapping $\vf\mapsto \vf^*$ is then obviously an involution on $X_G$ and so the first claim follows.

For $x\in X$, $g\in G$ we have
\begin{align*}
\fE_X(x^*)(g) &= \left\{
\begin{array}{lr}
        g^{-1}x^* & \text{if } g\in H\\
        0 & \text{if } g\notin H 
    \end{array}
\right. 
= \left\{
\begin{array}{lr}
        (gx)^* & \text{if } g\in H\\
        0 & \text{if } g\notin H 
    \end{array}
\right. 
\\&= \big(\fE_X(x)(g^{-1})\big)^*= \big(\fE_X(x)\big)^*(g) ,
\end{align*} so that $\fE_X$ is $*$-linear. \end{proof}
\end{prop}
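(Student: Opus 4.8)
The statement to prove is Proposition~\ref{prop:X_G-selfadj}: under the hypothesis that the action of $H$ on $X$ is left-involutive and $H \le Z(G)$, the formula $\varphi^*(g) := \varphi(g^{-1})^*$ defines an involution on $X_G$, and $\fE_X$ is $*$-linear.

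The plan is as follows. The core task is to verify that $\varphi^*$ actually lies in $X_G$, i.e.\ that it satisfies the defining equivariance relation $\varphi^*(gh) = h^{-1}\varphi^*(g)$ for all $g \in G$, $h \in H$. First I would compute $\varphi^*(gh) = \varphi((gh)^{-1})^* = \varphi(h^{-1}g^{-1})^*$. Here the hypothesis $H \le Z(G)$ is used to move $h^{-1}$ past $g^{-1}$, giving $\varphi(g^{-1}h^{-1})^*$. Now apply the defining relation for $\varphi \in X_G$ with the group element $g^{-1}$ and the $H$-element $h^{-1}$: $\varphi(g^{-1}h^{-1}) = h\cdot\varphi(g^{-1})$. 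Taking adjoints and invoking the left-involutive hypothesis $(h\cdot y)^* = h^{-1}\cdot y^*$ with $y = \varphi(g^{-1})$ yields $\varphi(g^{-1}h^{-1})^* = h^{-1}\cdot \varphi(g^{-1})^* = h^{-1}\cdot \varphi^*(g)$, which is exactly what is needed. Once $\varphi^* \in X_G$ is established, the fact that $\varphi \mapsto \varphi^*$ is a period-$2$ conjugate-linear (or linear, depending on the base field) map is immediate from the corresponding property of the ambient involution on $X^G$ together with $(g^{-1})^{-1} = g$, so it is genuinely an involution on $X_G$.

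For the second claim, that $\fE_X$ is $*$-linear, I would simply unwind the definition of $\varphi_x = \fE_X(x)$ case by case. For $g \in H$ we have $\fE_X(x^*)(g) = g^{-1}x^* = (g^{-1})\cdot x^*$, and by left-involutivity this equals $(g\cdot x)^* = (\fE_X(x)(g^{-1}))^*$ since $g^{-1} \in H$ too and $\fE_X(x)(g^{-1}) = (g^{-1})^{-1}x = gx$; for $g \notin H$ both $\fE_X(x^*)(g)$ and $\fE_X(x)(g^{-1})^* = 0^* = 0$ vanish (noting $g^{-1} \notin H$). Hence $\fE_X(x^*)(g) = (\fE_X(x))^*(g)$ for all $g$, so $\fE_X(x^*) = \fE_X(x)^*$.

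I do not anticipate a serious obstacle here; the proof is a direct verification. The one point that requires genuine care — and the only place where both hypotheses are actually consumed — is the computation that $\varphi^* \in X_G$: one must be careful to apply centrality of $H$ in exactly the right spot (to commute $h^{-1}$ and $g^{-1}$) and to invoke left-involutivity with the correct element and exponent, since a sign error or mismatched $h$ versus $h^{-1}$ would break the equivariance identity. Everything else is bookkeeping with the two-case definition of $\varphi_x$.
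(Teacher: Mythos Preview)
Your proposal is correct and follows essentially the same approach as the paper: the same chain of equalities establishing $\varphi^*\in X_G$ (using centrality of $H$ to commute $h^{-1}$ and $g^{-1}$, then the defining relation for $\varphi$, then left-involutivity), followed by the same two-case verification that $\fE_X(x^*)=\fE_X(x)^*$.
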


\begin{lem}\label{lem:Gif-map-*}
Assume $H\le Z(G)$. Let $X$ and $Y$ be involutive vector spaces equipped with involutive actions of $H$, and let $T: X\to Y$ be an {$H$-intertwiner} $*$-map. Then $T_G$ is also a $*$-map.
\begin{proof}
Let $\vf\in X_G$. We have 
\begin{align*}
T_G(\vf^*)(g) &= T(\vf^*(g)) = T(\vf(g^{-1})^*) = T(\vf(g^{-1}))^* \\&= \big(T_G(\vf) (g^{-1})\big)^*= \big(T_G(\vf)\big)^* (g) ,
\end{align*} for every $g\in G$.
\end{proof}
\end{lem}

\subsubsection{Algebra structure}

In this section we assume that $G$ is finite.  If $X$ is an algebra over $\bF$, then there is a natural ``convolution'' product on the set of all functions from $G$ to $X$. But in general, when $H$ acts by linear invertible maps on $X$, and $H\le G$, the space $X_G$ is not closed under the convolution. 
To guarantee that, we need to assume some compatibility condition between the $H$-action and the multiplication of $X$, as follows.

\begin{defn}\label{def:H-centric}
Let $X$ be an algebra with an $H$-action by invertible linear maps. 
We say that $X$ is an \emph{$H$-centric} algebra if 
 $H$ acts on $X$ by left-$X$-module maps, that is
\begin{equation}\label{def:mod-act}
~~~~~h \cdot (xy) = x\, (h\cdot y) ~~~~~~ (\text{for all }\ h\in H, \ x, y\in X) .
\end{equation}
\end{defn}

\begin{lem}\label{lem:X_G-alg}
Let $X$ be an $H$-centric algebra over $\bF$. Then the convolution product 
\begin{equation}\label{def:conv-prod}
\vf*\psi(g) : = \frac{1}{|H|}  \, \sum_{k\in G} \vf(k)\psi(k^{-1}g) ~~ (\vf, \psi \in X_G,\, g\in G)
\end{equation}
turns $X_G$ into an algebra. In this case, the map $\fE_X \colon X\to X_G$ is a homomorphism.

Furthermore, $\alpha\cdot(\vf*\psi) = (\alpha\cdot\vf)*\psi$ for every $\vf, \psi \in X_G$, $\alpha\in \cG^\ell$, and if $H\le Z(G)$ then also $(\vf*\psi)\cdot\beta = \vf*(\psi\cdot\beta)$ for every $\beta\in \cG^r$.
\begin{proof}
Assume that $X$ is an $H$-centric algebra. 
Then, for every $g\in G$ and $h\in H$ we have
\begin{align*}
\vf*\psi(gh) 
&=  \frac{1}{|H|} \, \sum_{k\in G} \vf(k)\psi(k^{-1}gh) 
=  \frac{1}{|H|} \, \sum_{k\in G} \vf(k)(h^{-1}\psi(k^{-1}g)) 
\\&= \frac{1}{|H|} \,  \sum_{k\in G} h^{-1}\big(\vf(k)\psi(k^{-1}g)\big) 
= h^{-1}(\vf*\psi(g)) ,
\end{align*}
which shows $\vf*\psi\in X_G$. 

It is straightforward to check, as in the classical case when $X$ is scalar, that this multiplication is associative and turns 
$X_G$ into an algebra over $\bF$.

For the last assertion, we observe
\begin{align*}
[\pi_X(g)(\vf*\psi)](g') 
&=  \frac{1}{|H|} \, \sum_{k\in G} \vf(k)\psi(k^{-1}g^{-1}g') 
\\&=  \frac{1}{|H|} \, \sum_{k\in G} \vf(g^{-1}k)\psi(k^{-1}g')
= [(\pi_X(g)\vf)*\psi](g') ,
\end{align*}
and similar computations show the right module claim.
\end{proof}
\end{lem}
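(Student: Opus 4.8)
The plan is to verify in turn the three assertions — that $*$ is a well-defined associative product on $X_G$, that $\fE_X$ is multiplicative, and that the module maps are compatible — each by a direct computation modeled on the classical group-algebra case, the only substantive point being where the $H$-centric hypothesis enters.

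First I would show $\vf*\psi\in X_G$ for $\vf,\psi\in X_G$. Expanding $(\vf*\psi)(gh)$ and using $\psi\in X_G$ to rewrite $\psi(k^{-1}gh)=h^{-1}\cdot\psi(k^{-1}g)$ turns the defining sum into $\frac1{|H|}\sum_{k}\vf(k)\,(h^{-1}\cdot\psi(k^{-1}g))$; the $H$-centric identity $h^{-1}\cdot(ab)=a\,(h^{-1}\cdot b)$ then lets me pull $h^{-1}$ outside the entire sum, yielding $(\vf*\psi)(gh)=h^{-1}\cdot(\vf*\psi)(g)$. This is the one place the hypothesis of Definition~\ref{def:H-centric} is genuinely needed, and I expect it to be the main (mild) obstacle; everything after it is bookkeeping. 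Associativity and bilinearity of $*$ are then checked exactly as for the scalar group algebra, by a Fubini interchange together with a reindexing $k\mapsto k'k$ of the summation variable, keeping the factor $1/|H|$ in place throughout.

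For multiplicativity of $\fE_X$ I would evaluate $(\vf_x*\vf_y)(g)=\frac1{|H|}\sum_{k\in G}\vf_x(k)\vf_y(k^{-1}g)$ directly. The supports of $\vf_x$ and $\vf_y$ restrict the sum to $k\in H$ and force the value to vanish unless $g\in H$, matching $\vf_{xy}(g)=0$ off $H$; and for $g\in H$ the remaining sum over $k\in H$ simplifies, using $H$-centricity together with the identity $h\cdot(h'\cdot z)=(hh')\cdot z$ for the action, to $g^{-1}\cdot(xy)$, the normalization $1/|H|$ absorbing the number of surviving terms. Hence $\vf_x*\vf_y=\vf_{xy}$.

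Finally, for the module compatibility it suffices by linearity in $\alpha$ to treat a generator $\alpha=\pi_X(g)$, and then $\alpha\cdot(\vf*\psi)=(\alpha\cdot\vf)*\psi$ falls out of the substitution $k\mapsto g^{-1}k$ in the convolution sum; the right-module identity $(\vf*\psi)\cdot\beta=\vf*(\psi\cdot\beta)$ is entirely symmetric, the hypothesis $H\le Z(G)$ being used only to know that right translations preserve $X_G$ (so that $\psi\cdot\beta$ makes sense and $\cG^r$ acts) and that the analogous substitution $k\mapsto kg$ is legitimate.
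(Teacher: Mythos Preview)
Your argument matches the paper's on every point the paper actually proves: the $H$-centric identity is used in the same way to show $\vf*\psi\in X_G$, associativity is deferred to the classical reindexing computation, and the left-module identity comes from the substitution $k\mapsto g^{-1}k$ (with the right-module case likewise left as ``similar'').

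The one place you go beyond the paper is the multiplicativity of $\fE_X$, which the paper asserts but does not argue. Here your sketch has a genuine gap: $H$-centricity gives $a\,(h\cdot b)=h\cdot(ab)$, so $(k^{-1}\cdot x)(g^{-1}k\cdot y)=g^{-1}k\cdot\big((k^{-1}\cdot x)\,y\big)$, but collapsing this to $g^{-1}\cdot(xy)$ would also require $(h\cdot a)\,b=h\cdot(ab)$, and that second identity is \emph{not} part of Definition~\ref{def:H-centric}. For instance, if $H=\bZ_2$ acts on $X=M_2(\bF)$ by $A\mapsto Au$ with $u$ a non-central involution, the action is $H$-centric yet $(\vf_x*\vf_y)(e)=\tfrac12(xy+xuyu)\neq xy$. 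In every application in the paper $H$ acts through central elements (see Proposition~\ref{isinv} and the setup of Lemma~\ref{lem:red-reg-*}), so the missing identity does hold there; but under the literal hypothesis of the lemma this step does not go through as written.
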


\begin{remark}
If $H$ is abelian, then conversely to Lemma~\ref{lem:X_G-alg}, one can see that if $X_G$ is closed under the convolution product~\eqref{def:conv-prod} then $X$ is $H$-centric.
\end{remark}

\subsection{Convolution operators and norming the $G$-ification} \label{coan} 

We now turn our attention to the norm structure of the $G$-ification of an operator space $X$ endowed with an action of a finite group $H$ by complete isometries.
Given a completely isometric representation $X\subset B(\cH)$ for a Hilbert space $\cH$, our goal is to construct from that, in a canonical way, a representation of $X_G$
on the Hilbert space $\cH_G$, where the latter is endowed with the Hilbert space norm inherited from $\ell^2(G, \cH)$.    
This will be  done by means of convolution maps.  This representation endows  $X_G$ with an operator space structure. 

Throughout the section, we assume $G$ is finite and $H$ is an abelian subgroup of $G$.  One effect of this is the following fact.
\begin{lem}\label{lem:pol-id}
Let $G$ be a finite group and $H$ an abelian subgroup of $G$. Then
\[
X_G = \sum_{g \in G} \, \pi_X(g) \, \fE_X(X) 
= \oplus_{k=1}^m \, \pi_X(g_k) \, \fE_X(X) ,
\]
where $\{ g_i : i = 1, \cdots , m \}$ is a full set of mutually inequivalent representatives of the coset space $G/H$.
\begin{proof}
Let $\{ g_i : i = 1, \cdots , m \}$ be a full set of mutually inequivalent representatives of $G/H$. Then for every $\vf\in X_G$ we have  $\varphi = \sum_k \, \pi_X(g_k) \, \fE_X(\varphi(g_k))$. To see this simply note that $(\pi_X(g_k) \, \fE_X(\varphi(g_k)))(g_j) = 0$ if $k \neq j$, and is $\varphi(g_k)$ if $k = j$.
So $\varphi = \sum_k \, \pi_X(g_k) \, \fE_X(\varphi(g_k))$ on each $g_j$, hence on $G$.   To show that the above sum is indeed a direct sum, assume $x_1, x_2, \dots, x_m \in X$ are such that 
$\varphi = \sum_{k=1}^m \, \pi_X(g_k) \, \fE_X (x_k) = 0$.  Evaluating at $g_j$ as above we have $x_k = 0$ for all $k$, and it follows that $\vf=0$.
\end{proof}
\end{lem}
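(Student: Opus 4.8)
The plan is to prove the two claimed equalities separately. For the first equality, $X_G = \sum_{g\in G} \pi_X(g)\,\fE_X(X)$, the inclusion $\supseteq$ is immediate since each $\pi_X(g)\fE_X(X)\subseteq X_G$ by the construction in Section~\ref{Linstr}. For the reverse inclusion, the key observation is that an element $\vf\in X_G$ is determined by its values on a set of coset representatives: given $\vf\in X_G$ and a full set $\{g_1,\dots,g_m\}$ of representatives of $G/H$, one checks the explicit reconstruction formula $\vf = \sum_{k=1}^m \pi_X(g_k)\,\fE_X(\vf(g_k))$. To verify this, evaluate both sides at an arbitrary $g_j$: by definition $(\pi_X(g_k)\fE_X(x))(g_j) = \fE_X(x)(g_k^{-1}g_j)$, and $g_k^{-1}g_j\in H$ precisely when $k=j$ (since the $g_i$ are inequivalent mod $H$); when $k=j$ this gives $\fE_X(\vf(g_j))(e) = \vf(g_j)$, and when $k\neq j$ it gives $0$. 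Hence both sides agree on the representatives $g_j$, and since elements of $X_G$ are determined by their values on $G/H$ via the relation $\vf(gh)=h^{-1}\vf(g)$, they agree on all of $G$.

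For the second equality, that the sum $\sum_{k=1}^m \pi_X(g_k)\,\fE_X(X)$ is in fact direct, I would argue as follows. Suppose $x_1,\dots,x_m\in X$ satisfy $\sum_{k=1}^m \pi_X(g_k)\,\fE_X(x_k) = 0$ in $X_G$. Evaluating this function at $g_j$ and using the computation above, every term with $k\neq j$ vanishes and the $k=j$ term equals $x_j$, so $x_j = 0$ for each $j$. Since $\fE_X$ is injective (it is an embedding, as noted after its definition in Section~\ref{Linstr}) and $\pi_X(g_k)$ is invertible, this shows each summand $\pi_X(g_k)\fE_X(x_k)$ is zero, establishing the directness of the decomposition.

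Two minor points deserve a line of care in writing up. First, the statement lists $m$ as the number of coset representatives; one should just fix $\{g_1,\dots,g_m\}$ once as a transversal of $G/H$ and note that neither equality depends on the choice of transversal. Second, the abelianness of $H$ (and finiteness of $G$) are the standing hypotheses of the subsection and ensure all the objects ($X_G$, $\pi_X$, $\fE_X$) behave as described earlier; no further use of them is needed here beyond what is already packaged into those constructions, so the proof is genuinely just the bookkeeping sketched above.

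Honestly, there is no real obstacle in this lemma: it is a linear-algebra bookkeeping fact about induced representations, and the only thing to get right is the evaluation identity $(\pi_X(g_k)\fE_X(x))(g_j) = \delta_{kj}\,x$ (interpreting $\delta_{kj}$ for inequivalent coset representatives) together with the observation that a function in $X_G$ is pinned down by its restriction to a transversal. I would present exactly the argument already indicated in the excerpt, perhaps spelling out the one displayed computation $(\pi_X(g_k)\fE_X(x))(g_j) = \fE_X(x)(g_k^{-1}g_j)$ explicitly so the reader sees where the coset condition enters.
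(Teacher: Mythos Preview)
Your proposal is correct and follows essentially the same approach as the paper's proof: both establish the reconstruction formula $\varphi = \sum_k \pi_X(g_k)\,\fE_X(\varphi(g_k))$ by evaluating at each coset representative $g_j$ and using that $(\pi_X(g_k)\fE_X(x))(g_j)$ vanishes unless $k=j$, and then obtain directness by the same evaluation argument applied to a vanishing sum. The only difference is that you spell out the intermediate step $(\pi_X(g_k)\fE_X(x))(g_j) = \fE_X(x)(g_k^{-1}g_j)$ and the role of the coset condition a bit more explicitly than the paper does.
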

There is a canonical idempotent map on $X_G$ with range $\fE_X(X)$, namely $E(\varphi ) =  \fE_X(\varphi(1))$.   Note that 
$E( g \,  \fE_X(x)) = 0$ for $x \in X$ and $g \notin H$, since $\fE_X(x)(g^{-1}) = 0$.   
Also,
$E( g_k^{-1} \varphi) = \fE_X(\varphi( g_k)).$ It follows that 
$$\varphi = \sum_k \, g_k \, E( g_k^{-1} \varphi) , \qquad  \varphi  \in X_G .$$
We call this the {\em polarization identity}.

\begin{defn} \label{modu}
Let $H$ be a group. By a {\em left $H$-modular operator space} we mean an operator space $X$ (real or complex), equipped with a left action of $H$ by complete isometries such that there is a completely isometric representation $\rho: X\to B(\cH)$ and a unitary representation $\theta: H \to B(\cH)$ such that $\rho(h x) = \theta(h) \rho(x)$ for all $h\in H$ and $x\in X$.
We call a triple $(\theta, \rho, \cH)$ as in the above an \emph{$H$-modular representation} of $X$.
Right $H$-modular operator spaces and representations are defined similarly, and an  {\em $H$-modular operator space} is simultaneously left and right modular.  
 We say that such 
$X$ is a {\em central $H$-modular operator space}  if in addition $\rho(h x) = \theta(h) \rho(x)  = \rho(x)  \theta(h)$ for all $h\in H$ and $x\in X$.    

We say that $X$ is faithful if the action of $H$ on $X$ is faithful.  This implies that the representation $\theta$ is faithful. 
\end{defn}
In Section \ref{chmod} we will give alternative abstract characterizations of  the above 
kinds of $H$-modular operator spaces.  We shall also see in Theorem \ref{ces} there that representations of a  $H$-modular operator space may be chosen to be triples $(\theta, \rho, \cH)$ as  above
with $\rho(h x) = \theta(h) \rho(x)$  and $\rho(x h ) =  \rho(x) \theta(h)$ for $h\in H$ and $x\in X$.  Henceforth whenever we write such a triple for a  $H$-modular operator space representation we shall
assume that these last equations hold.

The above gives an action of $H$ on the Hilbert space $\cH$ defined by $h\cdot \xi := \theta(h) \xi$, and an action of $H$ on $\cB(\cH)$ defined by $h\cdot T := \theta(h) T$ 
for every $h\in H$, $\xi\in \cH$ and $T\in \cB(\cH)$.

Let $\cC=\theta(H)^\prime$ be the commutant of $\{\theta(h) : h\in H\}$ in $\cB(\cH)$.  
If $H$ is abelian, then 
$\cC$ is invariant under the above $H$-action, and the von Neumann algebra $\cC$ 
turns into an involutive $H$-centric algebra and a central $H$-modular operator space. We will see that $\cC_G$ is an $\Aut_H(G)$-$C^*$-algebra 
equipped with a linear $G$-action
by linear complete isometries (see 
e.g.\ Lemma~\ref{lem:Aut-cov-rep} below).

We construct a canonical representation of $\cC_G$ on the Hilbert space $\cH_G$ by means of convolution operators.  Following the above notation, 
for every $\Psi\in \cC_G$ and $\xi\in \cH_G$, define $\fI(\Psi) (\xi) : G\to \cH$ by
\[
[\fI(\Psi) (\xi)](g) := \frac{1}{|H|} \sum_{k\in G} \Psi(k) \big(\xi(k^{-1}g)\big) .
\]

The following fact, which we use in several places later in the paper, follows by straightforward calculations, and we omit the details.
\begin{lem}\label{lem:fIfE=fT}
Assume $H\le Z(G)$. Then  
$\fI(\fE(T)) = T_G$
for every $T\in \cC$, where $T_G$ is the $G$-ification of $T$ defined in~\eqref{def:map-Gif} as a map on $\cH_G$. \end{lem}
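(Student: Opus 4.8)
\textbf{Proof plan for Lemma~\ref{lem:fIfE=fT}.}

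The plan is to simply unravel both sides on an arbitrary argument $g \in G$ and an arbitrary $\xi \in \cH_G$, and check they agree. Recall that $\fE(T) = \fE_\cC(T) \in \cC_G$ is the function on $G$ supported on $H$ with $\fE(T)(h) = h^{-1} \cdot T = \theta(h^{-1}) T$ for $h \in H$ (using the $H$-action on $\cC \subseteq \cB(\cH)$ given by $h \cdot S = \theta(h) S$), and $\fE(T)(k) = 0$ for $k \notin H$. On the other hand $T_G$, viewed as a map on $\cH_G$ via~\eqref{def:map-Gif}, is $T_G(\xi) = T \circ \xi$, i.e.\ $(T_G \xi)(g) = T(\xi(g))$.

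First I would write out the definition of $\fI$: for $\Psi = \fE(T)$ and $\xi \in \cH_G$,
\[
[\fI(\fE(T))(\xi)](g) = \frac{1}{|H|} \sum_{k \in G} \fE(T)(k)\big(\xi(k^{-1}g)\big).
\]
Since $\fE(T)(k) = 0$ unless $k \in H$, the sum collapses to a sum over $k = h \in H$, giving $\frac{1}{|H|} \sum_{h \in H} (\theta(h^{-1}) T)\big(\xi(h^{-1}g)\big)$. Now I would use the defining property of $\xi \in \cH_G$, namely $\xi(h^{-1}g) = \xi(gh^{-1})$ (here $H$ is central in $G$) $= h \cdot \xi(g) = \theta(h)\xi(g)$; wait — more carefully, the relation is $\xi(gh) = h^{-1}\xi(g)$, so $\xi(h^{-1}g) = \xi(g h^{-1}) = h\,\xi(g) = \theta(h)\xi(g)$. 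Substituting, each term becomes $\theta(h^{-1}) T \theta(h) \xi(g) = \theta(h^{-1})\theta(h) T \xi(g) = T\xi(g)$, where I used that $T \in \cC = \theta(H)'$ commutes with $\theta(h)$. Hence every one of the $|H|$ terms equals $T\xi(g)$, and the $\frac{1}{|H|}$ normalization yields $[\fI(\fE(T))(\xi)](g) = T(\xi(g)) = (T_G \xi)(g)$, which is exactly the claim.

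The only genuinely substantive point — and the one the paper rightly flags as needing ``straightforward calculations'' rather than being entirely trivial — is the interchange $\theta(h^{-1}) T \theta(h) = T$, which requires $T$ to lie in the commutant $\cC$, together with the centrality of $H$ in $G$ to rewrite $\xi(h^{-1}g)$ in terms of $\xi(g)$ via the defining equivariance of elements of $\cH_G$. One should also remark that one must first check $\fI(\fE(T))(\xi)$ and $T_G\xi$ genuinely lie in $\cH_G$ (the general fact that $\fI(\Psi)(\xi) \in \cH_G$ and $T_G(\xi) \in \cH_G$ is already implicit in the constructions preceding the lemma, so this needs no new work). There is no real obstacle here; the lemma is a direct computation whose role is to identify the abstractly-defined convolution representation $\fI$ with the concrete $G$-ification functor on the subalgebra $\fE(\cC)$.
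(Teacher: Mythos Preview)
Your proposal is correct and is precisely the ``straightforward calculation'' the paper alludes to (the paper omits the proof entirely). The only steps with any content are exactly the two you identify: the centrality of $H$ to rewrite $\xi(h^{-1}g)=\xi(gh^{-1})=\theta(h)\xi(g)$, and $T\in\cC=\theta(H)'$ to collapse $\theta(h^{-1})T\theta(h)$ to $T$.
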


\begin{prop}\label{prop:conv-rep}
Following the above notation, 
for every $\Psi\in \cC_G$ and $\xi\in \cH_G$ we have that  $\fI(\Psi) (\xi) \in \cH_G$, and the map $\fI(\Psi) : \cH_G\to \cH_G$ is bounded and linear.
Furthermore, the map 
$\fI : \cC_G\to \cB(\cH_G)$ is an injective $*$-homomorphism, and it is $G$-equivariant, that is, $\fI(\pi_\cC(g) \, \Psi) = \pi_\cH(g) \fI(\Psi)$ for $g\in G$, and $\Psi\in \cC_G$. \end{prop}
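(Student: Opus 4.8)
\textbf{Proof plan for Proposition \ref{prop:conv-rep}.}

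The plan is to verify the claims in the natural order: first that $\fI(\Psi)(\xi)$ lands in $\cH_G$, then boundedness and linearity, then the multiplicative and $*$-structure, then injectivity, and finally $G$-equivariance. For the first point I would check directly that $[\fI(\Psi)(\xi)](gh) = h^{-1} [\fI(\Psi)(\xi)](g)$ for $h \in H$: expanding the defining sum, $[\fI(\Psi)(\xi)](gh) = \frac{1}{|H|}\sum_k \Psi(k)(\xi(k^{-1}gh))$; using $\xi \in \cH_G$ this becomes $\frac{1}{|H|}\sum_k \Psi(k)(h^{-1}\xi(k^{-1}g))$, and since $\Psi(k) \in \cC = \theta(H)'$ commutes with $\theta(h)$, this equals $h^{-1}[\fI(\Psi)(\xi)](g)$ — using $H \le Z(G)$ only implicitly since $\cC_G$ is only defined / well-behaved in that setting. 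Linearity in $\xi$ is immediate from the formula. For boundedness, the cleanest route is to use the decomposition from Lemma \ref{lem:pol-id}: write $\Psi = \sum_{k=1}^m \pi_\cC(g_k)\fE(\Psi(g_k))$, and by Lemma \ref{lem:fIfE=fT}, $\fI(\fE(T)) = T_G$ which is visibly bounded (indeed $\|T_G\| = \|T\|$ acting coordinatewise on $\ell^2(G,\cH)$-norm), while $\fI(\pi_\cC(g)\fE(T)) = \pi_\cH(g) T_G$ once $G$-equivariance is known for the elementary tensors; so it suffices to establish equivariance on $\fE(\cC)$ first and then extend by linearity. Alternatively one can bound $\fI(\Psi)$ crudely by $\frac{1}{|H|}\sum_k \|\Psi(k)\|$ via the triangle inequality on $\ell^2(G,\cH)$, which already gives a finite bound since $G$ is finite.

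For the homomorphism property, I would compute $\fI(\Psi * \Phi)$ against the convolution formula \eqref{def:conv-prod} on $\cC_G$ and compare with $\fI(\Psi)\fI(\Phi)$; this is the standard ``convolution goes to composition'' calculation — substitute, use the normalizing factor $\frac{1}{|H|}$ twice, and reindex the double sum over $G \times G$ via the change of variables that collapses it to a single $\frac{1}{|H|}$-normalized sum, exactly mirroring the associativity check in the proof of Lemma \ref{lem:X_G-alg}. The bookkeeping with the $|H|^{-1}$ factors is the one place to be careful, since the summation is over $G$, not $G/H$, and the $H$-equivariance of $\Phi$ is what makes the redundant contributions combine correctly. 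For the $*$-property I would use the involution on $\cC_G$ from Proposition \ref{prop:X_G-selfadj} (noting $\cC$ carries the left-involutive $H$-action since $\cC = \theta(H)'$ is a von Neumann algebra on which $h\cdot T = \theta(h)T$ and $(\theta(h)T)^* = T^*\theta(h)^{-1} = \theta(h)^{-1}T^*$) and compute $\langle \fI(\Psi^*)\xi, \eta\rangle$ versus $\langle \xi, \fI(\Psi)\eta\rangle$ in the $\cH_G \subset \ell^2(G,\cH)$ inner product, again a reindexing using $k \mapsto k^{-1}$ type substitutions.

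For injectivity, suppose $\fI(\Psi) = 0$. Apply it to vectors of the form $\fE(\zeta)$ for $\zeta \in \cH$ (or more precisely to $\pi_\cH(g_k)\fE(\zeta)$): using Lemma \ref{lem:fIfE=fT} and the polarization identity for $\cC_G$, one recovers each coordinate $\Psi(g_k)$ as an operator on $\cH$ from $\fI(\Psi)$, so $\fI(\Psi) = 0$ forces $\Psi(g_k) = 0$ for all $k$, hence $\Psi = 0$ by Lemma \ref{lem:pol-id}. Concretely, $E'(\fI(\Psi)\fE(\zeta)) $ should recover $\fE(\Psi(1)\zeta)$ and conjugating by $\pi_\cH(g_k)$ gives the other coordinates. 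For $G$-equivariance, I would verify $\fI(\pi_\cC(g)\Psi)(\xi) = \pi_\cH(g)\fI(\Psi)(\xi)$ by direct substitution: $[\fI(\pi_\cC(g)\Psi)\xi](g') = \frac{1}{|H|}\sum_k (\pi_\cC(g)\Psi)(k)(\xi(k^{-1}g')) = \frac{1}{|H|}\sum_k \Psi(g^{-1}k)(\xi(k^{-1}g'))$, and reindexing $k \mapsto gk$ turns this into $\frac{1}{|H|}\sum_k \Psi(k)(\xi(k^{-1}g^{-1}g')) = [\fI(\Psi)\xi](g^{-1}g') = [\pi_\cH(g)\fI(\Psi)\xi](g')$ — this is the cleanest of the computations and essentially mirrors the last display in the proof of Lemma \ref{lem:X_G-alg}.

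The main obstacle I anticipate is not any single step but getting the normalization constants and the interplay between summing over all of $G$ versus over coset representatives exactly right in the homomorphism computation — in particular making sure the $H$-equivariance of the functions in $\cC_G$ is invoked precisely where needed so that the $\frac{1}{|H|}$ factors land correctly; everything else is a sequence of reindexing arguments of the type already carried out in Lemma \ref{lem:X_G-alg}.
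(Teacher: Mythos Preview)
Your proposal is correct and follows essentially the same route as the paper's proof: the verification that $\fI(\Psi)(\xi)\in\cH_G$, the reduction of boundedness to Lemma~\ref{lem:pol-id} and Lemma~\ref{lem:fIfE=fT}, the ``convolution goes to composition'' check for the homomorphism and $*$-properties, and the $G$-equivariance via reindexing all match the paper exactly. The only minor difference is in the injectivity step: the paper evaluates $[\fI(\Psi)(\fE_\cH(x))](g)$ directly and uses that $\fE_\cH(x)$ is supported on $H$ to collapse the sum to $|H|\Psi(g)(x)$, recovering all values $\Psi(g)$ at once, whereas you phrase it via the polarization identity and conjugation by $\pi_\cH(g_k)$ --- this is the same idea with slightly different bookkeeping.
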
	
\begin{proof}
Let $\Psi\in \cC_G$ and $\xi\in \cH_G$. Then for every $g\in G$ and $h\in H$ we have
\begin{align*}
[\fI(\Psi)(\xi)](gh) 
&= \frac{1}{|H|} \sum_{k\in G} \Psi(k) \big(\xi(k^{-1}gh)\big) 
\\&= \frac{1}{|H|} \sum_{k\in G} \Psi(k) \big(\theta(h^{-1})\xi(k^{-1}g)\big) 
\\&= \frac{1}{|H|} \sum_{k\in G} \theta(h^{-1})\left(\Psi(k) \big(\xi(k^{-1}g)\big)\right) 
\\&= \theta(h^{-1})\big([\fI(\Psi)(\xi)](g)\big) ,
\end{align*}
and therefore $\fI(\Psi)(\xi) \in \cH_G$.

It is obvious that $\fI(\Psi) : \cH_G\to \cH_G$ is linear.  To see that it is bounded note that by 
Lemma~\ref{lem:pol-id} this reduces to showing that $\fI(\fE(T)))$ is bounded for $T \in \cC$.  This follows from 
 Lemma \ref{lem:fIfE=fT} since $T_G(\xi) = T \circ \xi$.

That $\fI$ is a $*$-algebra homomorphism follows by straightforward calculations
that most readers will be familiar with from the setting of group convolutions.
To see that $\fI$ is one-to-one suppose that $[\fI(\Psi)(\fE_\cH(x))](g) = 0$ for some $\Psi \in \cC_G$ and for all
$x \in \cH, g \in G$. Then 
\[
0 = \sum_{k\in G} \, \Psi(k) \, \big(\fE_\cH(x))(k^{-1} g)\big) = \sum_{h \in H}  \Psi(gh) \, \big( \theta(h) x\big) = |H|\Psi(g)(x).
\]
Thus $\Psi(g) = 0$, and $\Psi = 0$.

The  $G$-equivariance is similar to the proof of Lemma \ref{lem:X_G-alg}:  $\fI(\pi_\cC(g) \, \Psi) (\xi)(g')$ is equal to 
$$\frac{1}{|H|}\sum_{k\in G} (\pi_\cC(g) \, \Psi)(k)\big(\xi(k^{-1}g')\big) = \frac{1}{|H|}\sum_{k\in G} \, \Psi(g^{-1} k)\big(\xi(k^{-1}g')\big),
$$  for $g, g' \in G$, and $\Psi\in \cC_G$. 
Setting $r = g^{-1} k$ we obtain 
$$\frac{1}{|H|}\sum_{k\in G} \, \Psi(r)\big(\xi(r^{-1} g^{-1} g')\big)  = (\pi_\cH(g) \fI(\Psi)) (\xi)(g').$$ 
Thus  $\fI(\pi_\cC(g) \, \Psi) = \pi_\cH(g) \fI(\Psi)$. 
\end{proof}

\begin{lem}\label{lem:red-reg-*}
Assume $H\ \le Z(G)$. Then
the map $\fE_\cC: \cC\to \cC_G$ is a unital 
injective $*$-homomorphism.

\begin{proof}
That $\fE_\cC$ is one-to-one is obvious from the definition of the map, and that it is a $*$-map was shown in Proposition~\ref{prop:X_G-selfadj}.
Since $\fI$ is an injective homomorphism (Proposition~\ref{prop:conv-rep}), and $(ST)_G = S_G T_G$ for all 
$S, T\in \cC$, Lemma~\ref{lem:fIfE=fT} yields that $\fE_\cC$ is a homomorphism.
It is an exercise that $\fE_\cC$ is unital. 
\end{proof} \end{lem}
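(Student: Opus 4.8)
The plan is to verify the three claimed properties of $\fE_\cC\colon\cC\to\cC_G$ in turn, bootstrapping off the results already established for the convolution representation $\fI$. First I would note that injectivity is immediate: if $\fE_\cC(S)=\fE_\cC(T)$ then evaluating the function $\vf_{S}=\vf_{T}$ at $e\in G$ gives $S=T$ directly from the definition $\vf_x(e)=x$. Next, $\fE_\cC$ is $*$-linear: this is exactly the content of Proposition~\ref{prop:X_G-selfadj}, since $\cC=\theta(H)'$ is a central $H$-modular operator space and, $H$ being abelian, the $H$-action on $\cC$ (namely $h\cdot T=\theta(h)T$) is left-involutive because $(h\cdot T)^*=(\theta(h)T)^*=T^*\theta(h)^{-1}=\theta(h^{-1})T^*=h^{-1}\cdot T^*$, using that $T^*$ also commutes with $\theta(H)$. (Here I am using the hypothesis $H\le Z(G)$ so that Proposition~\ref{prop:X_G-selfadj} applies and the involution on $\cC_G$ is defined.)

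The multiplicativity is where the real work sits, and the strategy is to transport it through the faithful $*$-homomorphism $\fI\colon\cC_G\to\cB(\cH_G)$ of Proposition~\ref{prop:conv-rep}. By Lemma~\ref{lem:fIfE=fT} we have $\fI(\fE_\cC(T))=T_G$ for every $T\in\cC$, where $T_G$ acts on $\cH_G$ by $T_G(\xi)=T\circ\xi$. Since composition of maps is associative, $(ST)_G=S_G\,T_G$ as operators on $\cH_G$, i.e.\ $(S\circ T)\circ\xi=S\circ(T\circ\xi)$. Therefore $\fI(\fE_\cC(ST))=(ST)_G=S_G\,T_G=\fI(\fE_\cC(S))\,\fI(\fE_\cC(T))=\fI(\fE_\cC(S)\fE_\cC(T))$, where the last equality uses that $\fI$ is an algebra homomorphism. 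Because $\fI$ is injective, $\fE_\cC(ST)=\fE_\cC(S)\fE_\cC(T)$, so $\fE_\cC$ is a homomorphism. (One should double-check that the obvious remark $(ST)_G=S_GT_G$ holds at the level of $X_G$-functions as well, but this is immediate from $T_G(\vf)=T\circ\vf$ and needs no hypotheses beyond $H$-intertwining of $S,T$, which holds since $\cC$ is the commutant of $\theta(H)$.)

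Finally, unitality: the identity of $\cC_G$ under the convolution product~\eqref{def:conv-prod} is the function $\delta\in\cC_G$ supported on $H$ with $\delta(h)=\theta(h^{-1})$ (equivalently $\delta=\fE_\cC(1_\cC)=\vf_{1_\cC}$), and one checks $\delta*\psi=\psi=\psi*\delta$ for all $\psi\in\cC_G$ by the short computation $(\delta*\psi)(g)=\frac{1}{|H|}\sum_{k\in G}\delta(k)\psi(k^{-1}g)=\frac{1}{|H|}\sum_{h\in H}\theta(h^{-1})\psi(h^{-1}g)=\frac{1}{|H|}\sum_{h\in H}\psi(g)=\psi(g)$, using $\psi(h^{-1}g)=\psi(gh^{-1})=h\cdot\psi(g)=\theta(h)\psi(g)$ (valid since $H\le Z(G)$ and $\psi\in\cC_G$). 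Since $\fE_\cC(1_\cC)=\vf_{1_\cC}=\delta$ by the defining formula for $\fE_X$, the map is unital. The main obstacle is genuinely only the bookkeeping in the multiplicativity step—making sure the three identifications $\fI\circ\fE_\cC=(\,\cdot\,)_G$, $(ST)_G=S_GT_G$, and injectivity of $\fI$ are all legitimately available—and everything else is a routine verification, consistent with the paper's own remark that it is ``an exercise.''
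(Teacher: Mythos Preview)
Your argument is correct and follows exactly the same route as the paper's proof: injectivity from evaluation at $e$, $*$-linearity from Proposition~\ref{prop:X_G-selfadj}, multiplicativity transported through the faithful homomorphism $\fI$ via Lemma~\ref{lem:fIfE=fT} together with $(ST)_G=S_GT_G$, and unitality by direct verification. The only difference is that you supply the details the paper omits (the check that the $H$-action on $\cC$ is left-involutive, and the explicit computation that $\vf_{1_\cC}$ is the convolution identity), which is fine.
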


The proof of the following identity is straightforward, we omit the details.
\begin{lem}\label{lem:Aut-cov-rep}
For every $\sigma\in \Aut_H(G)$, the map $\xi\mapsto\xi\sigma^{-1}$ defines a unitary $u_\sigma$ on ${\cH}_G$.
If $H$ is abelian, then for every $\sigma\in \Aut_H(G)$ and $\Psi \in \cC_G$, we have
\[
\fI(\Psi \sigma^{-1}) = u_\sigma \fI(\Psi) u_{\sigma^{-1}} .
\]
\end{lem}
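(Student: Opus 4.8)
The plan is to verify the two claims separately, both by direct calculation from the definitions. First I would check that $u_\sigma\colon \xi\mapsto \xi\circ\sigma^{-1}$ is a well-defined unitary on $\cH_G$. Well-definedness (i.e.\ $\xi\circ\sigma^{-1}\in\cH_G$) is exactly the observation already recorded in Section \ref{Linstr} that $\vf\circ\sigma\in X_G$ for $\sigma\in\Aut_H(G)$, applied with $X=\cH$; the point is that $\sigma^{-1}$ fixes $H$ pointwise, so the equivariance relation $\xi(gh)=\theta(h^{-1})\xi(g)$ is preserved. To see $u_\sigma$ is unitary, I would note $\sigma$ is a bijection of $G$ that restricts to a bijection of $H$, hence it permutes the cosets of $H$ in $G$; consequently $\|\xi\circ\sigma^{-1}\|_{\ell^2(G,\cH)}^2 = \sum_{g}\|\xi(\sigma^{-1}g)\|^2 = \sum_g \|\xi(g)\|^2$ by reindexing, so $u_\sigma$ is an isometry, and $u_{\sigma^{-1}}$ is an inverse, giving $u_\sigma^{-1}=u_{\sigma^{-1}}$ (and one checks $u_\sigma^* = u_{\sigma^{-1}}$ by the same reindexing on the inner product).

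For the intertwining identity, I would assume $H$ abelian, fix $\sigma\in\Aut_H(G)$ and $\Psi\in\cC_G$, and compute $\bigl[\fI(\Psi\sigma^{-1})(\xi)\bigr](g)$ for $\xi\in\cH_G$, $g\in G$. By definition this is $\frac{1}{|H|}\sum_{k\in G}(\Psi\sigma^{-1})(k)\bigl(\xi(k^{-1}g)\bigr) = \frac{1}{|H|}\sum_{k\in G}\Psi(\sigma^{-1}(k))\bigl(\xi(k^{-1}g)\bigr)$, recalling that $\Psi\sigma^{-1}$ means $\Psi\circ\sigma^{-1}$ in the $\cG^r$-action notation. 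Substituting $k=\sigma(r)$ and using that $\sigma$ is a bijection of $G$ and a homomorphism (so $k^{-1}=\sigma(r)^{-1}=\sigma(r^{-1})$), this becomes $\frac{1}{|H|}\sum_{r\in G}\Psi(r)\bigl(\xi(\sigma(r^{-1})g)\bigr)$. On the other side, $\bigl[u_\sigma\fI(\Psi)u_{\sigma^{-1}}(\xi)\bigr](g) = \bigl[\fI(\Psi)u_{\sigma^{-1}}(\xi)\bigr](\sigma^{-1}g) = \frac{1}{|H|}\sum_{r\in G}\Psi(r)\bigl((u_{\sigma^{-1}}\xi)(r^{-1}\sigma^{-1}g)\bigr) = \frac{1}{|H|}\sum_{r\in G}\Psi(r)\bigl(\xi(\sigma(r^{-1}\sigma^{-1}g))\bigr) = \frac{1}{|H|}\sum_{r\in G}\Psi(r)\bigl(\xi(\sigma(r^{-1})g)\bigr)$, using that $\sigma$ is a homomorphism and $\sigma(\sigma^{-1}g)=g$. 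The two expressions agree, which establishes $\fI(\Psi\sigma^{-1}) = u_\sigma\fI(\Psi)u_{\sigma^{-1}}$.

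I do not anticipate a genuine obstacle here; the lemma is a bookkeeping exercise in reindexing sums over $G$ along the automorphism $\sigma$, and the only subtlety worth a sentence in the writeup is to be careful that $\sigma$ is simultaneously a group homomorphism and a set bijection of $G$ restricting to the identity on $H$, so that both the equivariance condition defining $\cH_G$ and the normalizing sum $\frac{1}{|H|}\sum_{k\in G}$ transform correctly. Since the paper explicitly says the proof is straightforward and omits details, I would present the verification of unitarity of $u_\sigma$ in one line and the intertwining identity as the short chain of substitutions above.
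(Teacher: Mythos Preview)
Your proposal is correct and is exactly the routine verification the paper alludes to when it says the proof ``is straightforward'' and omits details; there is nothing to compare against, and your two-step plan (check $u_\sigma$ preserves $\cH_G$ and the $\ell^2$-norm via the bijection $\sigma$, then match the two convolution expressions via the substitution $k=\sigma(r)$) is the natural and only reasonable route. The one remark worth adding is that the hypothesis ``$H$ abelian'' is used not in the reindexing itself but simply to ensure $\cC=\theta(H)'$ is $H$-invariant so that $\cC_G$ is defined (as noted just before Lemma~\ref{lem:fIfE=fT}).
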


\section{Characterization of the $G$-ification} \label{maith}

\begin{defn}\label{def:conc-Gif}
Let $X$ be a 
central $H$-modular operator space, and $H$ a subgroup of $G$.
By a {\em concrete operator space $G$-ification of $X$} we mean a quadruple $(V,\pi, \Pi, \cK)$, where 
$\pi:G\to \cB(\cK)$ is a unitary representation of $G$ on a 
Hilbert space $\cK$, and $\Pi: X\to V \subset \cB(\cK)$ is a completely isometric representation of $X$ such that 
\begin{itemize}
\item[(i)]
$\Pi(X)$ is contained in the commutant of $\pi(G)$.
\item[(ii)] 
$\Pi(hx) = \pi(h) \Pi(x)$ for all $x\in X$ and $h\in H$.
\item[(iii)] 
The map $\fE_X(x)\pi_X(g)\mapsto \Pi(x) \pi(g)$ for $x\in X$ and $g\in G$, extends to a vector space isomorphism $\tilde\Pi:X_G\to V \subset B(K)$. 
\end{itemize} 
\end{defn} 

For brevity we sometimes also refer to $V = \tilde\Pi(X_G)$ as the concrete operator space $G$-ification of $X$, and we note that by Lemma~\ref{lem:pol-id},  condition (iii) 
is saying (assuming $H$ abelian) that 
$$V = \tilde\Pi(X_G) = {\rm span}\{\Pi(x) \pi(g) : x\in X,  g\in G\} = \oplus_{k=1}^m \,   \Pi(X) \, \pi(g_k),$$
where $\{ g_k : k = 1, \cdots , m \}$ is a full set of mutually inequivalent representatives of $G/H$. 
This is analogous to the fact that for a a real operator space $X$,
a concrete complexification of $X$ is  $\Pi(X) + i \Pi(X) = \Pi(X) \oplus i \Pi(X)$ inside $B(\cK)$, for a 
real  complete isometry
$\Pi : X \to B(\cK)$, for a complex Hilbert space $\cK$.  

\begin{defn}\label{def:conc-Gifr} Given $\G\le \Aut_H(G)$, we say the $G$-ification $(V,\pi, \Pi, \cK)$ is \emph{$\G$-reasonable} if the action of $\G$ on $\tilde\Pi(X)$ is by complete isometries,
that is, $\|\tilde\Pi(\vf\sigma)\| = \|\tilde\Pi(\vf)\|$
for every $\sigma\in\G$ and $\vf\in X_G$, and the same equality holds at each matrix level. 
\end{defn}
%
The reasonable condition means 
that 
$$\|\sum_{k=1}^m\Pi(x_k)\pi(g_k)\| = \|\sum_{k=1}^m\Pi(x_k)\pi(\sigma(g_k))\|$$ for every $\sigma\in \G \subset \Aut_H(G)$ and $x_1, \dots, x_k\in X$
(and similarly at each matrix level).

We now prove our main result.

\begin{thm} \label{thm:Gif-unq}
Assume $H\le Z(G)$, $G$ is finite, and that the pair $(G, H)$ is $\G$-admissible for some subgroup $\G$ of $\Aut_H(G)$. 
Then every ergodic faithful central $H$-modular operator space admits a unique $\G$-reasonable concrete operator space $G$-ification.
That is, if $(V, \pi, \Pi, \cK)$ and $(V^\prime, \pi^\prime, \Pi^\prime, \cK^\prime)$ are $\G$-reasonable $G$-ifications of $X$, then the map $\Pi(x) \pi(g)\mapsto \Pi^\prime(x) \pi^\prime(g)$ for $x\in X$ and $g\in G$, extends to a complete isometry $V\cong V^\prime$.
\begin{proof} 
First, we show that $X$ does admit at least one $\G$-reasonable concrete $G$-ification. 
Take any faithful central modular representation for the action of $H$ on $X$.   We can ensure that it is ergodic if $X$ is ergodic as follows. 
Let $(\theta^\prime, \rho^\prime, \cH^\prime)$ be our modular representation of $X$.
Let $p\in \cB(\cH^\prime)$ be the projection onto the subspace $\{\xi\in \cH^\prime : \theta^\prime(h) \xi = \xi\}$ of all $H$-invariant vectors in $\cH^\prime$. Then $\theta^\prime(h)p=p\theta^\prime(h)=p$ for every $h\in H$.
Since $X$ is ergodic, $\sum_{h\in H} hx=0$ for all $x\in X$. Thus, for every $\eta, \eta^\prime\in\cH^\prime$, and $x\in X$, we have
$$0=\langle \rho^\prime(\sum_{h\in H} hx)p \eta, \eta^\prime\rangle = 
\sum_{h\in H} \langle \rho^\prime(x)\theta(h)p \eta, \eta'\rangle= |H|\, \langle \rho^\prime(x)p \eta, \eta^\prime\rangle .$$
Thus, it follows $\rho^\prime(x)p = 0$ for all $x\in X$. Similarly, $p\rho^\prime(x) = 0$ for all $x\in X$.
The subspace $\cH:=(1-p)\cH^\prime$ is invariant under both $\theta^\prime$ and $\rho^\prime$, hence we get a 
faithful central modular representation $(\theta, \rho, \cH)$ for the action of $H$ on $X$, where $\theta$ and $\rho$ are the restrictions of $\theta^\prime$ and $\rho^\prime$ to $\cH$, respectively.

Let $\cC=\theta(H)^\prime$ be the commutant of $H$ in $\cB(\cH)$. Since $H$ is abelian, the von Neumann algebra $\cC$ is $H$-invariant, and turns into an $H$-centric involutive algebra, containing $\rho(X)$.
The $H$-actions on $\cH$ and $\cC$ are both ergodic by construction if $X$ is ergodic.

We show that $\cC$ admits a $\Aut_H(G)$-reasonable concrete operator space $G$-ification, and therefore we get a $\Aut_H(G)$-reasonable operator space $G$-ification of $X$ by restriction of the matricial norms.

Define $\Pi = \fI\circ\fE_\cC$.  The maps $\fI$ and $\fE_\cC$ are completely isometric $*$-homomorphisms  
by Proposition~\ref{prop:conv-rep} and Lemma~\ref{lem:red-reg-*}, respectively, and so the $*$-representation $\Pi$ is completely isometric. 
The quadruple $(\fI(\cC_G), \pi_{\cH}, \Pi, \cH_G)$ is a $G$-ification of $\cC$.
Indeed condition (ii) in Definition~\ref{def:conc-Gif} is obvious, and for (i) we observe 
for $g\in G$, $T \in \cC$ and $\vf\in \cH_G$,
\begin{align*}
[\fI(\fE_\cC(T)) \big(\pi_{\cH}(g) (\vf)\big)](g') 
&= \frac{1}{|H|} \sum_{k\in G} \fE_\cC(T)(k) \big(\vf(g^{-1}k^{-1}g')\big)
\\&=\frac{1}{|H|} \sum_{k\in H} \fE_\cC(T)(k) \big(\vf(k^{-1}g^{-1}g')\big)
\\&= [\pi_{\cH}(g)\big(\fI(\fE_\cC(T)) (\vf)\big)](g').
\end{align*}

To verify condition (iii) in Definition~\ref{def:conc-Gif}, recall from Proposition \ref{prop:conv-rep} 
that $\fI$ is $G$-equivariant, that is, $\fI(\pi_\cC(g) \, \Psi) = \pi_\cH(g) \fI(\Psi)$ for $g\in G$, and $\Psi\in \cC_G$.  It then follows from Lemma~\ref{lem:pol-id} that
\[
\fI(\cC_G) = \fI( \oplus_{k=1}^m \,  g_k \, \fE_\cC(\cC)) = \oplus_{k=1}^m \,   \Pi(\cC) \, \pi_{\cH}(g_k) ,
\]
where $\{ g_i : i = 1, \cdots , m \}$ is a full set of mutually inequivalent representatives of $G/H$.
 By Lemma~\ref{lem:Aut-cov-rep}, the action $\Aut_H(G)\act \cC_G$ is inner, hence acts via complete isometries.
Thus, this $G$-ification of $\cC$ is $\Aut_H(G)$-reasonable, and hence so is its subspace corresponding to $X_G$.

Towards the uniqueness part of the statement, we first claim that the above reasonable norm on $X_G$ is independent of the choice of the modular representation. 
Indeed, in the setup of the above construction, we see that $P_H:=\frac{1}{|H|}\sum_{h\in H} \lambda_h\otimes \theta(h)\in \cB(\ell^2(G, \cH))$ is the projection onto the subspace $\cH_G$. Thus, we have a completely isometric embedding of $\fI(X_G)$ into $\cB(\ell^2(G, \cH))$ by composing with the projection $P_H$.
Now, given a bijective completely isometric $H$-intertwiner $\dT: X\to Y$ between $H$-modular operator spaces, if $\fI(X_G)\subset \cB(\ell^2(G, \cH))$ and $\fI(Y_G)\subset \cB(\ell^2(G, \cK))$ are constructed as the above, then the $G$-ification $\dT_G$ of the map $\dT$ coincides with $\id\otimes \dT : \fI(X_G)\to\fI(Y_G)$, hence completely isometric. This shows the claim (which also follows from the later result Corollary \ref{thm:Gif-map-isom}).
In the rest of the proof, we refer to the above reasonable operator space $G$-ification as the `canonical' $G$-ification.

Next, we show the canonical reasonable norm above is unique. 
 Let $(V,\pi, \Pi, \cK)$ be a concrete $\G$-reasonable $G$-ification of $X$.
Let $q\in \cB(\cK)$ be the projection onto the subspace of $H$-invariant vectors. Then as we saw in the first part of the proof, $\Pi(X)q=q\Pi(X)=0$. Since $H$ is normal in $G$, $q$ commutes with $\pi(g)$ for all $g\in G$, and it follows $Vq=qV=0$. Thus, by reducing to the subspace $(1-q)\cK$, we may assume that the concrete $\G$-reasonable $G$-ification $(V,\pi, \Pi, \cK)$ is such that the $H$-actions on $\cK$ and on the commutant $\tilde\cA$ of $\pi(H)$ in $\cB(\cK)$ are both ergodic.

Thus, we may equip $\tilde\cA$ with its canonical $\Aut_H(G)$-reasonable $G$-ification operator space structure, which restricts to the canonical reasonable $G$-ification operator space structure on $X_G\cong \Pi(X)_G$.

Let $\cA$ be the commutant of $\pi(G)$ in $\cB(\cK)$. Then $\cA$ is an $H$-invariant $C^*$-subalgebra of $\tilde\cA$ containing $\Pi(X)$. Consequently, $\cA_G$ is a $C^*$-subalgebra of $\tilde\cA_G$ containing $\Pi(X)_G$.

Define $\mathfrak{m}: \cA_G\to \cB(\cK)$ by $\mathfrak{m}(\Psi) = \frac{1}{|H|} \,  \sum_{g\in G} \Psi(g) \pi(g)$. We claim that $\mathfrak{m}$ is a $*$-homomorphism, hence a contraction. To see the claim, let $\Phi, \Psi\in \cA_G$, then
\begin{align*}
\mathfrak{m}(\Phi*\Psi) 
&
= \frac{1}{|H|^2} \, \sum_{g\in G} \sum_{k\in G} \Phi(k)\Psi(k^{-1}g) \pi(g)
= \frac{1}{|H|^2} \, \sum_{k\in G} \Phi(k) \sum_{g\in G} \Psi(g) \pi(k)\pi(g)
\\&= \frac{1}{|H|^2} \, \sum_{k\in G} \Phi(k) \pi(k) \sum_{g\in G} \Psi(g) \pi(g)
= \mathfrak{m}(\Phi) \mathfrak{m}(\Psi) ,
\end{align*}
and 
\begin{align*}
|H| \,  \mathfrak{m}(\Psi^\ast) &= \sum_{g\in G} \Psi^\ast(g) \pi(g) = \sum_{g\in G} \Psi(g^{-1})^\ast \pi(g) = \sum_{g\in G} \Psi(g)^\ast \pi(g^{-1}) 
\\&= \sum_{g\in G} \Psi(g)^\ast \pi(g)^\ast = \left(\sum_{g\in G} \Psi(g) \pi(g)\right)^\ast = |H| \, \mathfrak{m}(\Psi)^\ast .
\end{align*}

Now, consider the *-homomorphism $\bigoplus_{\sigma\in\Aut_H(G)} \mathfrak{m}\sigma: \cA_G\to \bigoplus_{\sigma\in\Aut_H(G)}\cB(\cK)$. We show it is injective.

Indeed, 
for $\Psi \in \cA_G$ assume  that $\mathfrak{m}(\Psi\sigma) = 0$ for every $\sigma\in\Aut_H(G)$. Then $$(\mathfrak{m}\sigma)(\Psi) = \mathfrak{m}(\Psi \circ \sigma^{-1}) = 
\frac{1}{|H|} \, \sum_{g\in G} \Psi(\sigma^{-1}(g)) \pi(g) = \frac{1}{|H|} \,  \sum_{g\in G} \%zz
 \Psi(g) \pi(\sigma(g)) = 0 .$$
By Lemma~\ref{lem:sumzero}, $\sum_{\sigma\in \Aut_H(G)} \pi(\sigma(g)) = 0$ for every $g\in G\setminus H$. 
Thus,
\[
\sum_{\sigma\in \Aut_H(G)}\sum_{g\in G\setminus H} \Psi(g) \pi(\sigma(g)) = 0,
\]
 which implies by the above that
\begin{align*}
0 &= \sum_{\sigma\in \Aut_H(G)}\sum_{h\in H} \Psi(h) \pi(\sigma(h))
= \sum_{\sigma\in \Aut_H(G)}\sum_{h\in H} \Psi(h) \pi(h) 
\\&= |\Aut_H(G)| \sum_{h\in H} \pi(h^{-1}) \Psi(e) \pi(h)
= |\Aut_H(G)| \sum_{h\in H} 
\Psi(e) ,
\end{align*} 
which implies $\Psi(e) = 0$.
Let $k\in G$, and let $\Phi =\pi(k)(\Psi)$. Then for every $\sigma\in\Aut_H(G)$,
\begin{align*}
|H| \, \mathfrak{m}(\Phi\sigma) 
&= \sum_{g\in G} \Phi(g) \pi(\sigma(g)) 
= \sum_{g\in G} \Psi(k^{-1}g) \pi(\sigma(g)) 
\\&= \sum_{g\in G} \Psi(g) \pi(\sigma(kg)) 
= \sum_{g\in G} \Psi(g) \pi(\sigma(k))\pi(\sigma(g)) 
\\&= \pi(\sigma(k))\sum_{g\in G} \Psi(g) \pi(\sigma(g)) 
= \pi(\sigma(k))\mathfrak{m}(\Psi\sigma) 
= 0 ,
\end{align*} 
which implies by the above argument that $\Psi(k^{-1}) = \Phi(e) = 0$. Hence, $\Psi=0$, and the claim follows.

The claim in particular implies that $\| \Psi \| = \sup_{\sigma\in\G} \|\mathfrak{m}(\Psi\sigma)\|$ for every $\Psi\in \cA_G$. 
Since $(\pi, \Pi, \cK)$ is a $\G$-reasonable $G$-ification of $X$, we have $\|\mathfrak{m}(\Psi\sigma)\| = \|\mathfrak{m}(\Psi)\|$ for every $\sigma\in \G\subset \cA_G$, and the same holds at each matrix level.
Thus, it follows that $\mathfrak{m}$ is completely isometric on $\Pi(X)_G$. 

Obviously, $\mathfrak{m}$ maps $\Pi(X)_G$ onto $\sum_{g \in G} \Pi(X)\pi(g)$. Hence, the operator space structure on $X_G$ inherited by the representation $(\pi, \Pi, \cK)$ coincides with the canonical $G$-ification.
\end{proof}
\end{thm}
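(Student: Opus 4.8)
The statement has two halves: existence and uniqueness of a $\G$-reasonable concrete $G$-ification. For existence, the plan is to bootstrap from an arbitrary faithful central modular representation $(\theta', \rho', \cH')$ of $X$. First I would cut down to an ergodic representation: let $p$ be the orthogonal projection onto the $H$-fixed vectors in $\cH'$, observe $\theta'(h) p = p$ for all $h$, and use the ergodicity of $X$ (so $\sum_{h \in H} hx = 0$) to compute that $\rho'(x) p = p \rho'(x) = 0$ for every $x \in X$; then replace $\cH'$ by $\cH := (1-p)\cH'$ to get a faithful central modular representation $(\theta, \rho, \cH)$ in which the $H$-action on $\cH$ --- and hence on the commutant $\cC = \theta(H)'$ --- is ergodic. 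Now $\cC$ is an $H$-centric involutive von Neumann algebra containing $\rho(X)$, and I would apply the convolution machinery of Section \ref{coan}: set $\Pi := \fI \circ \fE_\cC$, which is completely isometric by Proposition \ref{prop:conv-rep} and Lemma \ref{lem:red-reg-*}. Then I verify that $(\fI(\cC_G), \pi_\cH, \Pi, \cH_G)$ is a concrete $G$-ification of $\cC$: condition (ii) of Definition \ref{def:conc-Gif} is immediate, condition (i) follows by a direct computation (the terms in the convolution defining $\fI(\fE_\cC(T))$ are supported on $H$, which lets one move $\pi_\cH(g)$ past), and condition (iii) follows from the $G$-equivariance of $\fI$ (Proposition \ref{prop:conv-rep}) together with Lemma \ref{lem:pol-id}. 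Finally, $\Aut_H(G)$-reasonableness is automatic because the $\Aut_H(G)$-action on $\cC_G$ is spatially implemented by the unitaries $u_\sigma$ of Lemma \ref{lem:Aut-cov-rep}, hence is by complete isometries; restricting the matrix norms to the subspace $\Pi(X)_G \subseteq \cC_G$ gives the desired $G$-ification of $X$.

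For uniqueness, let $(V, \pi, \Pi, \cK)$ be any $\G$-reasonable concrete $G$-ification. First I would again reduce to the ergodic case: the projection $q$ onto the $H$-fixed vectors of $\cK$ satisfies $\Pi(X) q = q \Pi(X) = 0$ as above, and since $H \trianglelefteq G$ the projection $q$ commutes with all $\pi(g)$, so $Vq = qV = 0$; passing to $(1-q)\cK$ we may assume the $H$-actions on $\cK$ and on $\cA := \pi(G)'$ are ergodic. Then $\cA$ is an $H$-invariant $C^*$-algebra containing $\Pi(X)$, so $\cA_G$ sits inside $\tilde\cA_G$ (where $\tilde\cA = \pi(H)'$) and carries the canonical reasonable structure on its subspace $\Pi(X)_G$. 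The key device is the map $\mathfrak{m} : \cA_G \to \cB(\cK)$, $\mathfrak{m}(\Psi) = \frac{1}{|H|}\sum_{g \in G} \Psi(g)\pi(g)$, which I check is a $*$-homomorphism by the convolution computation (using that $\Psi(g) \in \cA$ commutes with the $\pi$'s) and the adjoint computation (using $\Psi^*(g) = \Psi(g^{-1})^*$ and $\pi(g)^* = \pi(g^{-1})$). The crucial claim is that $\bigoplus_{\sigma \in \Aut_H(G)} \mathfrak{m}\sigma$ is \emph{injective} on $\cA_G$, which forces $\|\Psi\| = \sup_{\sigma} \|\mathfrak{m}(\Psi\sigma)\|$ (and likewise at matrix levels); combined with $\G$-reasonableness ($\|\mathfrak{m}(\Psi\sigma)\| = \|\mathfrak{m}(\Psi)\|$ for $\sigma \in \G$), this shows $\mathfrak{m}$ is a complete isometry on $\Pi(X)_G$ onto $\sum_g \Pi(X)\pi(g)$, so the operator space structure on $X_G$ coming from $(\pi, \Pi, \cK)$ agrees with the canonical one.

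The heart of the matter --- and the step I expect to be the main obstacle --- is proving injectivity of $\bigoplus_\sigma \mathfrak{m}\sigma$. Here is where the admissibility hypothesis on $(G,H)$ enters, through Lemma \ref{lem:sumzero}: if $\mathfrak{m}(\Psi\sigma) = 0$ for all $\sigma$, then rewriting gives $\sum_\sigma \Psi(g)\pi(\sigma(g)) $ summed over $g$, and splitting $G = H \sqcup (G \setminus H)$, the off-$H$ part vanishes by $\sum_\sigma \pi(\sigma(g)) = 0$ for $g \notin H$ (Lemma \ref{lem:sumzero}), leaving only the $H$-terms, which collapse (using $\sigma|_H = \id$ and centrality of $H$, so $\pi(h)$ commutes through) to a nonzero multiple of $\Psi(e)$; hence $\Psi(e) = 0$. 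Then translating by $\pi(k)$ --- i.e. applying the same argument to $\Phi = \pi(k)\Psi$, whose reasonableness/vanishing transfers since $\mathfrak{m}(\Phi\sigma) = \pi(\sigma(k))\mathfrak{m}(\Psi\sigma) = 0$ --- yields $\Psi(k^{-1}) = 0$ for every $k$, so $\Psi = 0$. I would also record the independence of the canonical norm from the choice of modular representation: identifying $\cH_G$ with the range of the projection $P_H = \frac{1}{|H|}\sum_{h} \lambda_h \otimes \theta(h)$ on $\ell^2(G,\cH)$, any bijective completely isometric $H$-intertwiner $\dT : X \to Y$ induces $\dT_G = \id \otimes \dT$ between the corresponding concrete $G$-ifications, hence a complete isometry (this also follows from the later Corollary \ref{thm:Gif-map-isom}). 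The remaining bookkeeping --- verifying conditions (i)--(iii), checking $\mathfrak{m}$ is multiplicative and $*$-preserving, confirming $\fE_\cC$ is unital --- is routine convolution algebra and I would not belabor it.
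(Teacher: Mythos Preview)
Your proposal is correct and follows essentially the same approach as the paper's proof: the existence argument via the ergodic reduction, the convolution representation $\Pi = \fI\circ\fE_\cC$ on $\cH_G$, and reasonableness via the unitaries $u_\sigma$; and the uniqueness argument via the $*$-homomorphism $\mathfrak{m}$, with injectivity of $\bigoplus_\sigma \mathfrak{m}\sigma$ proved by the split $G = H \sqcup (G\setminus H)$ and Lemma~\ref{lem:sumzero}, then translation by $\pi(k)$. The only cosmetic point is to keep the index set for the direct sum consistently equal to $\G$ (rather than $\Aut_H(G)$) when you invoke $\G$-reasonableness at the end---Lemma~\ref{lem:sumzero} is stated for $\G$, so the injectivity argument goes through verbatim with $\G$ and then matches the $\G$-invariance hypothesis.
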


\begin{remark}
We note that in the existence part of the above proof, we did not use the admissibility condition, it was used only in the uniqueness part. In particular, we showed that:
\emph{ If $H\le Z(G)$, then every central $H$-modular operator space admits an $\Aut_H(G)$-reasonable operator space $G$-ification. Furthermore, if $X$ is $H$-ergodic 
then the concrete $G$-ification can be chosen so that $H$ acts ergodically on the Hilbert space of the representation. }
Moreover we remark that the admissibility was only used in the uniqueness to allow access
to Lemma \ref{lem:sumzero}.  In fact for the uniqueness part of the above proof
one could replace admissibility by the weaker condition 
$\sum_{\sigma\in \G} \pi_X(\sigma(g)) = 0$ for $g\in G\setminus H$ established in
that lemma.   

 We do not know to what extent one may relax the last condition 
 and still get uniqueness.   For any abelian $G$ we are able to show that 
the canonical $G$-ification $X_G$ has the largest norm
(and matrix norms) among all $G$-ifications.  This should help in future `uniqueness theorems'.
\end{remark}

\begin{cor}
Assume $H\le Z(G)$, $G$ is finite, and that the pair $(G, H)$ is $\G$-admissible for some subgroup $\G$ of $\Aut_H(G)$.
If $X$ is an ergodic faithful central $H$-modular operator space, then every $\G$-reasonable concrete operator space $G$-ification of  $X$ is $\Aut_H(G)$-reasonable.
\begin{proof}
As remarked above, every ergodic faithful central $H$-modular operator space $X$ has an $\Aut_H(G)$-reasonable concrete $G$-ification, which is completely isometric to any $\G$-reasonable concrete $G$-ification of $X$. Thus the claim follows.
\end{proof}
\end{cor}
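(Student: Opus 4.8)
The plan is to derive the corollary from the uniqueness half of Theorem~\ref{thm:Gif-unq}, essentially for free once one unwinds how the $\Aut_H(G)$-action sits on a concrete $G$-ification. First I would recall, from the existence half of the proof of Theorem~\ref{thm:Gif-unq} (and the Remark immediately following it), that since $H\le Z(G)$ and $X$ is $H$-ergodic, $X$ carries a distinguished concrete $G$-ification $(V_0,\pi_0,\Pi_0,\cK_0)$ --- the \emph{canonical} one --- obtained by restricting the matricial norms of $\fI(\cC_G)$, where $\cC=\theta(H)^\prime$ for a faithful ergodic central modular representation of $X$. This canonical $G$-ification is $\Aut_H(G)$-reasonable: by Lemma~\ref{lem:Aut-cov-rep} the action of $\Aut_H(G)$ on $\cC_G$ is implemented by the unitaries $u_\sigma$ on $\cH_G$, hence is by complete isometries, and this property descends to the subspace $\tilde\Pi_0(X_G)$, which is exactly the content of Definition~\ref{def:conc-Gifr}.

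Next, let $(V,\pi,\Pi,\cK)$ be an arbitrary $\G$-reasonable concrete $G$-ification of $X$. Because $(G,H)$ is $\G$-admissible and $X$ is ergodic and faithful, Theorem~\ref{thm:Gif-unq} applies and produces a complete isometry $\Theta\colon V\to V_0$ determined by $\Pi(x)\pi(g)\mapsto \Pi_0(x)\pi_0(g)$; equivalently $\Theta\circ\tilde\Pi=\tilde\Pi_0$, since the two sides agree on the spanning set $\{\fE_X(x)\pi_X(g)\}$ of $X_G$. The point that needs a moment of care --- and the only one --- is that $\Theta$ intertwines the two $\Aut_H(G)$-actions. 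But those actions are by definition transported from the single intrinsic action $\sigma\cdot\vf=\vf\circ\sigma^{-1}$ on $X_G$ through the linear isomorphisms $\tilde\Pi$ and $\tilde\Pi_0$ of condition~(iii) of Definition~\ref{def:conc-Gif}; under the identification $\Theta\circ\tilde\Pi=\tilde\Pi_0$ the map $\Theta$ is simply the identity of $X_G$, which manifestly commutes with $\vf\mapsto\vf\circ\sigma^{-1}$. In particular $\Theta\big(\tilde\Pi(\vf\sigma)\big)=\tilde\Pi_0(\vf\sigma)$ for every $\vf\in X_G$ and $\sigma\in\Aut_H(G)$, and the same holds at each matrix level.

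Finally I would put the pieces together: for every $\sigma\in\Aut_H(G)$ and $\vf\in X_G$,
\[
\|\tilde\Pi(\vf\sigma)\| \;=\; \|\tilde\Pi_0(\vf\sigma)\| \;=\; \|\tilde\Pi_0(\vf)\| \;=\; \|\tilde\Pi(\vf)\| ,
\]
the first and third equalities because $\Theta$ is a complete isometry with $\Theta\circ\tilde\Pi=\tilde\Pi_0$, and the middle one because the canonical $G$-ification is $\Aut_H(G)$-reasonable; running the identical computation at the $M_n$-level gives the matricial statement. Hence $(V,\pi,\Pi,\cK)$ is $\Aut_H(G)$-reasonable. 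The main (and really only) obstacle is the bookkeeping of the previous paragraph, namely verifying that the canonical complete isometry furnished by Theorem~\ref{thm:Gif-unq} is $\Aut_H(G)$-equivariant; granting that, the corollary is immediate.
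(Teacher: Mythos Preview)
Your proof is correct and follows exactly the paper's own argument: use the existence of the canonical $\Aut_H(G)$-reasonable $G$-ification together with the uniqueness part of Theorem~\ref{thm:Gif-unq} to transport the $\Aut_H(G)$-invariance across to any $\G$-reasonable $G$-ification. The only difference is that you spell out the $\Aut_H(G)$-equivariance of the identifying complete isometry, which the paper leaves implicit.
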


\begin{example}
Let $X$ be a real operator space and $X_\bH$ its quaternification (see Example~\ref{ex:quat}). If an operator space norm on $X_\bH$ (which extends the operator space norm of $X$) is invariant under the inner automorphisms of $Q_8$, then by the Corollary it is automatically invariant under every automorphism of $Q_8$ (recall $|{\rm Inn}(Q_8)|=4$ and $|\Aut(Q_8)|=24$).
\end{example}


\section{Abstract characterizations} \label{absc} 

\subsection{Characterizations of modular operator spaces} \label{chmod}

If $G$ is a group acting on an abstract operator space $X$, we wish to treat  group action constructions 
analogously to the theory of operator modules (see e.g.\ Chapters 3 and 4 in \cite{BLM}).  
Viewing the left and right $G$-actions as actions of $C^*(G)$, then (real or complex) $G$-modular operator spaces are nothing but (nondegenerate)  
$C^*(G)$-operator bimodules in the sense of Christensen-Effros-Sinclair.   These are also called $h$-bimodules  in \cite[Chapters 3 and 4]{BLM}, where one may find their theory developed (in the complex case; the real case works out almost identically
  as noted in \cite{BReal}).
   We explain some aspects of the latter theory in the setting of $G$-modular spaces. 

\begin{thm} \label{ces}  \begin{itemize} 
\item[(1)] The left and right $G$-actions on a (real or complex) $G$-modular operator space $X$ commute automatically, indeed there exists
a completely isometric  representation $\Pi : X \to B(\cH)$ and a unitary representation $\pi$ 
of 
$G$  on $\cH$ such that $$\Pi( (h x) k) = \Pi( h (xk) ) =  \pi(h) \Pi(x) \pi(k) , \qquad h, k\in G, x\in X .$$   If in addition
$X$ is
a central $G$-modular operator space then $\pi(G)$  commutes with $\Pi(X)$, and if also the $G$-action is faithful then $\pi$ is one-to-one.
\item[(2)] Viewing left and right $G$-actions as actions of $C^*(G)$, there is a bijective correspondence between
 $G$-modular operator spaces and nondegenerate 
$C^*(G)$-operator bimodules in the sense of Christensen-Effros-Sinclair.   Similarly for left or right $G$-modular operator spaces.
  \end{itemize} 
\end{thm}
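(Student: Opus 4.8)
The statement asserts that $G$-modular operator spaces are exactly the nondegenerate operator $C^*(G)$-bimodules, with the additional automatic-commutation and faithfulness features. My plan is to reduce everything to the Christensen--Effros--Sinclair (CES) representation theorem for operator bimodules (\cite[Theorem 3.3.1]{BLM}), exactly as in the complex module theory, taking care that the group action gives unitary (not merely contractive) operators. First I would observe that a unitary action of the finite group $G$ on $X$ by complete isometries extends linearly to a completely contractive, nondegenerate, unital action of the (finite-dimensional) $C^*$-algebra $\cA := C^*(G)$ on $X$: for the left action send $\sum_g c_g \, g$ to the map $x \mapsto \sum_g c_g\, (g\cdot x)$, and note this is completely contractive because on the dense set of group elements it is a complete isometry and $C^*(G)$ is the closed span of the unitaries $\{g\}$. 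The crucial point here — and I expect it to be the only place real care is needed — is that an \emph{a priori} one-sided $G$-action is automatically two-sided compatible: this is exactly the content of the CES theorem, which produces a \emph{single} Hilbert space $\cH$, a completely isometric $\Pi : X \to B(\cH)$, and two commuting nondegenerate $*$-representations $\theta_\ell, \theta_r$ of $\cA$ on $\cH$ with $\Pi(a\cdot x) = \theta_\ell(a)\Pi(x)$ and $\Pi(x\cdot a) = \Pi(x)\theta_r(a)$.

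Next I would restrict $\theta_\ell$ and $\theta_r$ to the unitary group $G \subset C^*(G)$: since $\theta_\ell, \theta_r$ are unital $*$-homomorphisms, $\pi := \theta_\ell|_G$ and (with a suitable identification) $\pi' := \theta_r|_G$ are genuine unitary representations of $G$ on $\cH$, and they commute because $\theta_\ell(\cA)$ and $\theta_r(\cA)$ commute. This already yields the factorization $\Pi((hx)k) = \pi(h)\Pi(x)\pi'(k)$ and, since left and right actions of $G$ on $X$ both come from the \emph{same} $\cA$-bimodule structure realized on $\cH$, it gives the automatic commutation of the left and right $G$-actions on $X$ itself (apply $\Pi^{-1}$ on $\Pi(X)$). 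For a \emph{central} $G$-modular operator space, the hypothesis $\rho(hx)=\theta(h)\rho(x)=\rho(x)\theta(h)$ in Definition \ref{modu} forces, after passing through the CES representation, that $\theta_r$ agrees with $\theta_\ell$ up to the inversion $g \mapsto g^{-1}$ on $G$; substituting $\pi'(k) = \pi(k)$ on $\Pi(X)$ shows $\pi(G)$ commutes with $\Pi(X)$, establishing the centrality clause. Finally, if the $G$-action on $X$ is faithful, then for $h \neq e$ there is $x$ with $hx \neq x$, so $\pi(h)\Pi(x) = \Pi(hx) \neq \Pi(x)$, forcing $\pi(h) \neq 1$; hence $\pi$ is injective. (This is the argument already sketched after Definition \ref{modu} and in Section \ref{Linstr}.)

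For part (2), the correspondence is now essentially a bookkeeping statement: given a $G$-modular operator space, part (1) produces a representation exhibiting it as a nondegenerate operator $C^*(G)$-bimodule; conversely, any nondegenerate operator $C^*(G)$-bimodule $X$, by CES, admits commuting nondegenerate $*$-representations of $C^*(G)$, whose restrictions to $G$ are unitary representations implementing complete-isometric left and right $G$-actions on $X$, so $X$ is a $G$-modular operator space. The two constructions are mutually inverse because in both directions the $C^*(G)$-action is, by nondegeneracy and the density of $\mathrm{span}\,G$ in $C^*(G)$, uniquely determined by the $G$-action and vice versa. The one-sided statements are proved identically, using the one-sided version of the CES theorem (for operator modules rather than bimodules). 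The main obstacle, as noted, is not any of these steps individually but making sure the passage $C^*(G) \leftrightarrow G$ is done cleanly — that unital $*$-homomorphisms of $C^*(G)$ restrict to unitary representations and, conversely, that a complete-isometric $G$-action extends to a \emph{completely contractive unital} $C^*(G)$-action so that CES applies; both are standard but deserve an explicit sentence. I would also remark that since $G$ is finite, $C^*(G)$ is finite-dimensional, so no subtleties about nondegeneracy or approximate identities arise.
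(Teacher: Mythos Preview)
Your overall strategy is the same as the paper's: pass from the $G$-action to a $C^*(G)$-module structure and invoke the CES representation theory. However, there is a genuine gap at precisely the point you flag as ``the only place real care is needed.'' You write that the automatic commutation of the left and right $G$-actions ``is exactly the content of the CES theorem.'' It is not. The CES bimodule representation theorem (e.g.\ \cite[Theorem~3.3.1]{BLM}) takes as \emph{input} an operator bimodule, which by definition already has commuting left and right actions, and produces a concrete representation with commuting $\theta_\ell,\theta_r$. It does not show that a space which is separately a left operator module and a right operator module is automatically a bimodule. The paper closes this gap by invoking \cite[Theorem~4.6.7]{BLM} (the multiplier-algebra result that a space which is a left operator $A$-module and a right operator $B$-module in the CES sense is automatically an $A$-$B$-operator bimodule), and only \emph{then} applies the bimodule CES representation. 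You need the same ingredient, or an equivalent argument via $\cA_\ell(X)$ and $\cA_r(X)$, before you can appeal to CES for bimodules.

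A smaller point: your justification that the linear extension of the $G$-action to $C^*(G)$ is completely contractive (``because on the dense set of group elements it is a complete isometry and $C^*(G)$ is the closed span of the unitaries'') does not stand on its own --- complete isometry on generators does not imply complete contractivity of the linear span with respect to the $C^*(G)$ norm. The correct (and immediate) argument uses the modular representation $(\theta,\rho,\cH)$ from Definition~\ref{modu}: extend $\theta$ to a $*$-representation $\tilde\theta : C^*(G)\to B(\cH)$ and observe that $a\cdot x$ corresponds to $\tilde\theta(a)\rho(x)$, which is completely contractive since $\tilde\theta$ is. This is how the paper proceeds. Your treatment of the central and faithful clauses is fine in spirit; once you have a single representation implementing both sides (which can be arranged e.g.\ by the standard $2\times 2$ corner trick on $\cH\oplus\cH$ with $\pi = \theta_\ell\oplus\theta_r$), the centrality $hx=xh$ immediately gives $\pi(h)\Pi(x)=\Pi(x)\pi(h)$, and faithfulness of $\pi$ follows as you say.
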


\begin{proof}   If $X$ is left $G$-modular  there exists a completely isometric representation $\Psi : X \to B(\cH)$ and a unitary representation  
$\pi$ of $G$ on $\cH$ such that $\pi(g) \Psi(x) = \Psi(g x)$ for $g \in G, x \in X$.   Extending $\pi$ to a representation of $C^*(G)$ on $\cH$ 
we see that $\Psi(X)$ and $X$ are  nondegenerate left $C^*(G)$-operator modules.   Conversely any nondegenerate 
left $C^*(G)$-operator module $X$ has a `CES representation' $\Psi : X \to B(\cH)$  with the desired property showing that $X$ is left $G$-modular (see 
\cite[Theorem 2.4]{BReal} 
for the real version of this).   It follows from \cite[Theorem 4.6.7]{BLM} 
(whose  real version is similar; see \cite{BReal}) 
that $X$ is a left $C^*(G)$-operator module and a right $C^*(G)$-operator module if and only if it is a $C^*(G)$-operator bimodule. 
In particular, the left and right $G$-actions commute. 
Similarly to the `left case' we get a representation as in (1) (again see 
\cite[Theorem 2.4]{BReal} 
for the real version of the operator bimodule representation).      

 If in addition 
$X$ is central then $\Pi( h x) = \Pi( xh ) =  \pi(h) \Pi(x) = \Pi(x) \pi(h)$.  If the action is faithful and $\pi(h) = I$ then $\Pi( h x) = \Pi( x)$ and 
$hx = x$ for all $x \in X$, so that $h = 1$. 
\end{proof}

\begin{remark}
One may characterize the left $G$-modular  (resp.\  central $H$-modular) actions on an operator space $X$ in terms of unitaries in 
the operator space left multiplier $C^*$-algebra $\cA_\ell(X)$ (resp.\  centralizer algebra 
$Z(X) = \cA_\ell(X) \cap \cA_r(X)$) and their theory 
see e.g.\ \cite[Proposition 4.5.8]{BLM}, \cite[Corollary 4.8]{BEZ}, and \cite{Sharma} and \cite[Section 4]{BReal} in the real case.   
The cited corollary gives the metric condition
 $$\big\| \big[ \begin{array}{cl} g x_{ij} \\ y_{ij} \end{array} \big] \big\| = \big\| \big[  \begin{array}{cl} x_{ij} \\ y_{ij} \end{array} \big] \big\| , \qquad g \in G, [x_{ij}], [y_{ij}] \in M_n(X).$$ 
characterizing  left $G$-modular actions, or equivalently when $G$ acts as unitary elements of the $C^*$-algebra $\cA_\ell(X)$.    Right $G$-modular actions are 
characterized by the condition $\| [ [g x_{ij} ] \; [ y_{ij} ] ] \| = \| [ [x_{ij} ] \; [ y_{ij} ] ] \|$, and  
central $H$-modular actions are characterized by the one action being both left and right modular.

As in these references it is sometimes useful to view such structure in terms of the `Shilov representation' (that is the ternary or $C^*$-envelope), or injective envelope) of $X$ or of $X_G$.
For brevity we shall 
 avoid this perspective here.  
\end{remark}

In the following 
we combine the two cases of `$H$-centric algebras' and `left-involutive algebras' considered in Sections 2.1.2 and 2.1.3.

\begin{prop}\label{isinv} 
If $A$ is a real or complex $C^*$-algebra and $H$ is an abelian group acting faithfully on the left of $A$ by invertible linear maps.
 The following  are equivalent: 
 \begin{itemize} 
\item [(1)] 
 $A$ is an $H$-centric algebra and the left action of $H$ is left-involutive.
\item [(2)]  The action of $H$ on $A$ is  by left multiplication by unitaries  in the center ${\mathcal Z}(M(A))$ of $M(A)$. 
\item [(3)]  $A$ is a central $H$-modular operator space. \end{itemize}
\end{prop}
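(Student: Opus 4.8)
The plan is to prove $(1)\Rightarrow(2)\Rightarrow(3)\Rightarrow(1)$. The implication $(3)\Rightarrow(1)$ is essentially bookkeeping: if $A$ is a central $H$-modular operator space, Theorem~\ref{ces}(1) gives a faithful $*$-representation $\rho:A\to B(\cH)$ together with a faithful unitary representation $\theta:H\to B(\cH)$ with $\rho(hx)=\theta(h)\rho(x)=\rho(x)\theta(h)$. Since $\rho$ is a faithful $*$-homomorphism I may regard $A\subseteq B(\cH)$; then $h\cdot x=\theta(h)x=x\theta(h)$. From this, $h\cdot(xy)=\theta(h)xy=x\theta(h)y=x(h\cdot y)$, which is the $H$-centric condition \eqref{def:mod-act}, and $(h\cdot x)^*=(\theta(h)x)^*=x^*\theta(h)^*=x^*\theta(h^{-1})=\theta(h^{-1})x^*=h^{-1}\cdot x^*$, which is left-involutivity (here I use that $\theta(h)$ is unitary so $\theta(h)^*=\theta(h)^{-1}=\theta(h^{-1})$, and that $\theta(h^{-1})$ commutes with $x^*$). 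So $(1)$ holds.

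For $(1)\Rightarrow(2)$: fix $h\in H$ and let $u_h$ denote the linear map $x\mapsto h\cdot x$ on $A$. The $H$-centric condition says $u_h$ is a left $A$-module map, hence by the standard identification of adjointable/bounded left-module maps on a $C^*$-algebra with right multiplications by elements of $M(A)$ — or more concretely, since $H$ acts by \emph{invertible} linear maps and $u_h u_{h'}=u_{hh'}$, each $u_h$ is a bijective left-module map — there is $v_h\in M(A)$ with $h\cdot x = u_h(x) = x v_h$ for all $x$. (One first checks $u_h$ is bounded: being a bijective left-module map it is given by right multiplication by a multiplier, which is automatically bounded; alternatively one invokes \cite[Chapters 3,4]{BLM} on operator modules.) The homomorphism property $u_{hh'}=u_h u_{h'}$ forces $v_{hh'}=v_{h'}v_h$, and $u_e=\id$ gives $v_e=1$; since $H$ is a group, each $v_h$ is invertible in $M(A)$ with $v_h^{-1}=v_{h^{-1}}$. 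Next, left-involutivity $(h\cdot x)^*=h^{-1}\cdot x^*$ reads $v_h^* x^* = x^* v_{h^{-1}} = x^* v_h^{-1}$ for all $x$, i.e. $v_h^* = v_h^{-1}$ (taking $x^*$ to run over an approximate identity), so $v_h$ is unitary in $M(A)$. Finally, to see $v_h\in\mathcal Z(M(A))$: the map $x\mapsto h\cdot x$ is \emph{also} visibly a right $A$-module map? No — rather, one shows $v_h$ is central directly. Since $h\cdot x = xv_h$ and also the action descends to $M(A)$ (extend $u_h$ to $M(A)$ by strict continuity, still a module map, still right multiplication by $v_h$), for $a\in A, m\in M(A)$ we get $(ma)v_h = h\cdot(ma) = m(h\cdot a)$? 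This needs the observation that $u_h$ extends to a module map on $M(A)$ with $u_h|_A$ the original action and $u_h(m)=mv_h$; then $a v_h m = u_h(a)m$ while $u_h(am) = amv_h$, and $H$-centricity on $M(A)$ (left-module map) gives $u_h(am)=a\,u_h(m)=amv_h$, already consistent; centrality then follows because $v_h = u_h(1)$ and for any $m\in M(A)$, $mv_h = m u_h(1)$ whereas $u_h(m)=u_h(m\cdot 1)=m u_h(1)=mv_h$ and $u_h(m)=u_h(1\cdot m)$ — here I must use that $u_h$ is a left module map to write $u_h(1\cdot m)=1\cdot u_h(m)$, trivial, so instead I use that $v_h m = u_h(m)$ should equal $m v_h$ by applying the module identity on the other side, which requires knowing $u_h$ is also a \emph{right} module map. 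That last point is exactly where I must be careful.

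\textbf{The main obstacle} is establishing centrality of $v_h$ in $M(A)$ in $(1)\Rightarrow(2)$: the $H$-centric hypothesis only gives that $x\mapsto h\cdot x$ is a \emph{left} $A$-module map, which a priori only yields $h\cdot x = xv_h$ with $v_h\in M(A)$, not $v_h\in\mathcal Z(M(A))$. To close this I will use left-involutivity as the second ingredient: applying $*$ to $h\cdot x = xv_h$ gives $(h\cdot x)^* = v_h^* x^*$, and left-involutivity rewrites the left side as $h^{-1}\cdot x^* = x^* v_{h^{-1}}$; so for all $y\in A$ (put $y=x^*$) we have $v_h^* y = y v_{h^{-1}} = y v_h^{-1}$. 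Thus left multiplication by $v_h^*$ equals right multiplication by $v_h^{-1}$ on all of $A$, hence on $M(A)$ by strict density: $v_h^* m = m v_h^{-1}$ for all $m\in M(A)$. Taking $m=v_h$ gives $v_h^* v_h = 1$, and combined with $v_h$ invertible, $v_h^* = v_h^{-1}$ (unitary); substituting back, $v_h^{-1} m = m v_h^{-1}$ for all $m$, i.e. $v_h^{-1}$, hence $v_h$, is central. This simultaneously yields unitarity and centrality, giving $(2)$.

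For $(2)\Rightarrow(3)$: if $h\cdot x = v_h x = x v_h$ with $v_h$ a unitary in $\mathcal Z(M(A))$, represent $A$ faithfully and nondegenerately on a Hilbert space $\cH$; this extends to a unital faithful representation of $M(A)$ on $\cH$, under which $\theta(h):=v_h$ is a unitary on $\cH$, $\theta$ is a homomorphism (from $v_{hh'}=v_h v_{h'}$, which follows from $u_{hh'}=u_h u_{h'}$ and both sides being multiplication operators — note the center is abelian so order does not matter), faithful (since the action is faithful, $v_h=1\Rightarrow h=e$), and $\rho(hx)=v_h\rho(x)=\theta(h)\rho(x)$ and $=\rho(x)v_h=\rho(x)\theta(h)$. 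Thus $(\theta,\rho,\cH)$ witnesses that $A$ is a central $H$-modular operator space in the sense of Definition~\ref{modu}. I would also remark that $(1)\Leftrightarrow(3)$ could alternatively be deduced from the multiplier-algebra characterization in the Remark following Theorem~\ref{ces}, but the direct argument above is self-contained.
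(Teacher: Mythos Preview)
Your $(3)\Rightarrow(1)$ has a genuine gap. Theorem~\ref{ces}(1) only furnishes a completely isometric \emph{linear} map $\Pi:A\to B(\cH)$, not a $*$-representation of the $C^*$-algebra $A$; the word ``representation'' there is in the operator-space sense. Consequently you cannot identify $A$ with a $C^*$-subalgebra of $B(\cH)$ and read off $h\cdot(xy)=x(h\cdot y)$ or $(h\cdot x)^*=h^{-1}\cdot x^*$ from the operator identities $\theta(h)\rho(x)=\rho(x)\theta(h)$: those deductions require $\rho(xy)=\rho(x)\rho(y)$ and $\rho(x^*)=\rho(x)^*$, neither of which is given. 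The paper avoids this by proving $(3)\Rightarrow(2)$ through the operator-multiplier machinery: the central $H$-modular structure yields a $*$-homomorphism $C^*(H)\to \cA_\ell(A)\cap\cA_r(A)=Z(A)$, and for a $C^*$-algebra the operator-space centralizer $Z(A)$ coincides with $\mathcal Z(M(A))$, so each $h$ acts by a central unitary multiplier. You mention this route at the end as an ``alternative'', but it (or something equivalent) is actually required; your ``direct'' argument is not self-contained.

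Your $(1)\Rightarrow(2)$, on the other hand, is correct and takes a somewhat different path from the paper. The paper views $A$ as a right Hilbert $A$-module, verifies that $T_h$ is adjointable with adjoint $T_{h^{-1}}$ (this uses both hypotheses in~(1)), so $T_h$ is left multiplication by a unitary in $M(A)$, and then reads centrality off from $H$-centricity via $(hx)y=x(hy)$. You instead invoke $H$-centricity first to realize $T_h$ as right multiplication by some $v_h$, and then use left-involutivity to obtain $v_h^*\,y=y\,v_{h^{-1}}$ for all $y$, from which unitarity and centrality drop out simultaneously. Both approaches work; yours has the minor advantage of extracting centrality and unitarity in one stroke, while the paper's makes the Hilbert-module interpretation explicit. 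Your $(2)\Rightarrow(3)$ is fine.
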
 

\begin{proof}    (1) $\Rightarrow$ (2) \ We have $$(h \cdot x)^* y = (h^{-1} \cdot x^*) y =  x^* (h^{-1}   \cdot y), \qquad x, y \in A, h \in H .$$ 
Thus if $T_h x = h \cdot x$ then $T_h$ is adjointable with adjoint  $T_{h^{-1}} = (T_h)^{-1}$. 
It follows that $T_h$ is unitary as an adjointable map.  For any $C^*$-algebra viewed as a right $C^*$-module
the adjointable maps correspond to left multiplication by elements in $M(A)$.  
So $H$ corresponds to  a subgroup of the unitaries in $M(A)$. 
Since $(hx)y = x(hy) = (xh) y$ for $x, y \in A$ we see that 
$h \in {\mathcal Z}(M(A))$.

(2) $\Rightarrow$ (1) \  We have  $(h x) y = x (h y)$ for $x, y \in A, h \in H$.
So $A$ is $H$-centric.   Also, 
$(h x)^* = x^* h^{-1} = h^{-1} x^*$. 

(2) $\Rightarrow$ (3) \   This implication is obvious.

(3) $\Rightarrow$ (2) \   We saw in Theorem \ref{ces} that central $H$-modular operator spaces correspond to operator 
$C^*(H)$-bimodules with $hx = xh$ for $x \in A, h \in H$.  By the theory of operator 
bimodules this corresponds to a $*$-homomorphism $\pi : C^*(H) \to \cA_l(A) \cap \cA_r(A) = Z(A) = {\mathcal Z}(M(A))$.
This uses facts from 4.6.6 in \cite{BLM} and e.g.\ \cite[Section 4]{BReal} in the real case (see e.g.\ Example (1) before Theorem 4.4 there).
Thus we have (2) since $\pi(h)$ is  unitary, being a contraction with contractive inverse. 
\end{proof}

If these equivalent conditions hold then by (2) $A$ has a $*$-representation in which it commutes with $H$.  Thus $A$ is a
$C^*$-subalgebra and $H$-submodule of $\cC = H'$, so that $A_G$ is a $C^*$-subalgebra of $\cC_G$.


\begin{remark}
A similar proof shows that if $A$ is a unital operator algebra then $A$ is  a central $H$-modular operator space
 if and only if 
the  left action of $H$ on $A$ is by left multiplication  by unitaries  in the (diagonal of the) center of $A$; and if and only if 
$A$ is a $H$-modular operator space and  an $H$-centric algebra.  Since $A \subset H'$ we then have as above that $A_G$ is an operator subalgebra
of the $C^*$-algebra $\cC_G$. 
Similar results hold for approximately unital operator algebras.  If $\cA$ is a von Neumann algebra then 
so is $\cA_G$ for finite $G$ (this can be proved with the help of Theorem \ref{iso}).
\end{remark}

Similarly, the $G$-ification of an operator system is an operator system (this may be seen easily from its matrix form in Section \ref{marp}). 
Similarly, the $G$-ification of a unital operator space  (resp.\ operator module over $A$) 
can be shown by similar arguments to be a unital  operator space (resp.\ operator module over $A_G$).  We omit the details since we shall not need these here. 

\subsection{Abstract  $G$-ification}

\begin{defn} \label{def:abs-Gif}
Let $X$ be a  faithful central $H$-modular operator space.
By an {\em abstract operator space $G$-ification of $X$} we mean a  pair $(V, \Phi)$
where $V$ is a $G$-modular  operator space  with
 $hv = vh$ for $v \in V, h \in H$, 
and $\Phi : X \to V$ is a complete isometry and $H$-intertwiner into $\{ v \in V : g v = v g \; \textrm{for all} \, g \in G \}$ such that 
$V = \bigoplus_{gH \,\in\, G/H} \, \Phi(X) \, g$. \end{defn}

We identify two operator space $G$-ifications  $(V, \Phi)$ and $(W, \Psi)$ of $X$ if there exists a surjective complete isometry 
$j : V \to W$ such that 
 $j (\Phi(x) \, g) = \Psi(x) g$ for $x \in X$ and $g \in G$.
Notice that this implies that $j \circ \Phi = \Psi$ and $j(v g) = j(v) g$ and $j(g v) = g j(v)$ for $v \in V$ and $g \in G$.

Every concrete operator space $G$-ification $(V, \pi, \Pi, \cK)$ of $X$ may be viewed as an abstract operator space $G$-ification of $X$. Indeed $V=\sum_{gH \,\in\, G/H}  \Pi(X)\pi(g)$ clearly is a $G$-modular  operator space; or alternatively an operator $C^*(G)$-bimodule. Since $\pi(H)$ commutes with $\pi(G)$ and $\Pi(X)$ it is easy to see that $hv = vh$ for $v \in V, h \in H$.  
The other conditions of being an  abstract operator space $G$-ification are obvious. The next result shows that every abstract operator space $G$-ification of $X$
may be identified in the sense above with  an appropriate concrete operator space $G$-ification.   Thus for the most part we may treat concrete and abstract operator space $G$-ifications interchangeably.

\begin{thm} \label{absco} 
Let $G$ and $H$ be as above, with $H\le Z(G)$,  and let $(V, \Phi)$ be an abstract operator space $G$-ification of $X$.
Then there exists a unitary representation $\pi$ of $G$ on a 
Hilbert space $\cK$, and a completely isometric representation $\Psi: V \to \cB(\cK)$  such that $(\pi, \Psi \circ \Phi, \cK)$ yields a concrete operator space $G$-ification of $X$ such that $(\sum_{g\in G} \, \Psi(\Phi(X)) \pi(g), \Psi \circ \Phi)$ is identifiable with $(V, \Phi)$ in the sense above.
\end{thm}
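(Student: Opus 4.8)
The plan is to realize the abstract $G$-ification $(V,\Phi)$ concretely by choosing a faithful representation of $V$ as an operator $C^*(G)$-bimodule and checking the three conditions of Definition~\ref{def:conc-Gif}. First I would invoke Theorem~\ref{ces}(2): since $V$ is a $G$-modular operator space, it is (identified with) a nondegenerate operator $C^*(G)$-bimodule, and by Theorem~\ref{ces}(1) there is a completely isometric representation $\Psi: V\to\cB(\cK)$ together with a unitary representation $\pi$ of $G$ on $\cK$ so that $\Psi((gv)k)=\pi(g)\Psi(v)\pi(k)$ for $g,k\in G$ and $v\in V$. (One must arrange nondegeneracy of the bimodule action, which is the standard reduction: cut down $\cK$ by the projection onto the essential subspace $[\,\pi(C^*(G))\Psi(V)\cK\,]$, which is $\pi(G)$- and $\Psi(V)$-invariant; the action is automatically nondegenerate here because $G$ is finite, so $\pi(e)=I$ acts as the identity.) Then set $\Pi:=\Psi\circ\Phi: X\to\cB(\cK)$, which is a complete isometry since both $\Psi$ and $\Phi$ are.

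Next I would verify the three conditions. Condition (ii), $\Pi(hx)=\pi(h)\Pi(x)$ for $h\in H$, $x\in X$, follows because $\Phi$ is an $H$-intertwiner and $\Psi$ intertwines the left $G$-action with $\pi$. Condition (i), that $\Pi(X)$ lies in the commutant of $\pi(G)$: by hypothesis $\Phi$ maps $X$ into $\{v\in V: gv=vg \text{ for all } g\in G\}$, so for $x\in X$ and $g\in G$ we get $\pi(g)\Pi(x)=\Psi(g\Phi(x))=\Psi(\Phi(x)g)=\Pi(x)\pi(g)$. For condition (iii), I need the map $\fE_X(x)\pi_X(g)\mapsto \Pi(x)\pi(g)$ to extend to a \emph{vector space isomorphism} $X_G\to \sum_{g\in G}\Pi(X)\pi(g)$. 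By Lemma~\ref{lem:pol-id} (using $H$ abelian, which holds as $H\le Z(G)$), $X_G=\oplus_{k=1}^m\pi_X(g_k)\fE_X(X)$ for coset representatives $g_1,\dots,g_m$ of $G/H$; and by the defining property of an abstract $G$-ification, $V=\bigoplus_{gH\in G/H}\Phi(X)g$, so $\Psi(V)=\oplus_{k=1}^m\Pi(X)\pi(g_k)$ as a vector space direct sum. Thus the evident linear map sending the $k$-th summand $\pi_X(g_k)\fE_X(x)$ to $\Pi(x)\pi(g_k)$ is a well-defined vector space isomorphism $\tilde\Pi: X_G\to\Psi(V)$, and a short check on the other left-coset representatives shows it is independent of the chosen representatives and agrees with $\fE_X(x)\pi_X(g)\mapsto\Pi(x)\pi(g)$ in general (write $g=g_k h$ and use that $\pi_X$, $\pi$ are representations together with (ii)). Hence $(\Psi(V),\pi,\Pi,\cK)$ is a concrete operator space $G$-ification of $X$.

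Finally, the identification: by construction $\Psi(V)=\sum_{g\in G}\Psi(\Phi(X))\pi(g)=\tilde\Pi(X_G)$, and $\Psi: V\to\Psi(V)$ is a surjective complete isometry with $\Psi(\Phi(x)g)=\Pi(x)\pi(g)=(\Psi\circ\Phi)(x)\,\pi(g)$, which is precisely the identification of $(V,\Phi)$ with $(\sum_{g\in G}\Psi(\Phi(X))\pi(g),\Psi\circ\Phi)$ in the sense stipulated after Definition~\ref{def:abs-Gif}. Since $\Psi$ is a complete isometry it also intertwines the two-sided $G$-actions and the $H$-actions as required there.

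The main obstacle I anticipate is not any single deep step but the bookkeeping in condition (iii): one must be careful that the formula $\fE_X(x)\pi_X(g)\mapsto\Pi(x)\pi(g)$ is genuinely well-defined on all of $X_G$ (not just on a set of coset representatives) and that the direct-sum decompositions of $X_G$ and of $V$ match up under it. The cleanest route is exactly the one above — reduce to coset representatives via Lemma~\ref{lem:pol-id}, establish the isomorphism there, then extend by linearity and covariance — rather than attempting to define $\tilde\Pi$ directly by the stated formula. A secondary point to handle carefully is the nondegeneracy/essential-subspace reduction needed to apply Theorem~\ref{ces}, but since $G$ is finite this is immediate ($\pi(e)$ is the identity operator, so the bimodule action is automatically nondegenerate).
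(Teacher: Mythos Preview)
Your proposal is correct and follows essentially the same route as the paper: invoke Theorem~\ref{ces} to obtain $(\Psi,\pi,\cK)$, set $\Pi=\Psi\circ\Phi$, and verify conditions (i)--(iii) of Definition~\ref{def:conc-Gif} exactly as you do. The paper's proof is more terse (it dispatches (iii) in one line via the direct sum $\Psi(V)=\bigoplus_{gH}\Psi(\Phi(X)g)=\bigoplus_{gH}\Pi(X)\pi(g)$ and omits the nondegeneracy digression), but the argument is the same.
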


\begin{proof}    By Theorem \ref{ces} there exists
a completely isometric  representation $\Psi : V \to B(\cH)$ and a unitary representation $\pi$ 
of $G$ on $\cH$ such that $$\Psi( (h x) k) = \Psi( h (xk) ) =  \pi(h) \Psi(x) \pi(k) , \qquad h, k\in G, x\in V.$$   
Let $\Pi = \Psi \circ \Phi : X \to B(\cH)$,  which is a complete isometric representation of $X$.  
Then $$\Pi(x) \pi(g) = \Psi (\Phi (x))  \pi(g) = \Psi (\Phi (x) g) = \Psi (g \Phi (x)), \qquad x \in X, g \in G,$$ which
by similar considerations equals  $\pi(g) \Pi(x)$.  Also, 
$$\Pi(hx) =  \Psi(\Phi(hx)) = \pi(h)\circ \Pi(x), \qquad x \in X, h \in H.$$   Finally, $$
\Psi(V)=\bigoplus_{gH\,\in\, G/H} \, \Psi(\Phi(X) g) = \bigoplus_{gH\,\in\, G/H} \, \Pi(X) \pi(g) .$$  Thus  $(\Psi(V) , \cK, \Psi \circ \Phi, \pi)$
is a concrete operator space $G$-ification of $X$. 
\end{proof}

In the above setup, the map $V\to X_G$ defined by $\sum_{Hg\,\in\, H\backslash G}\Pi(x_g)\pi(g)\mapsto (g\mapsto x_g)$ is a linear (algebraic) isomorphism. In particular, this allows us to define the
 $\Aut_H(G)$-action on $V$.   We may then  again say that an abstract $G$-ification $V$ of $X$ is \emph{reasonable} if 
the action of $\Aut_H(G)$ on $V$ is by complete isometries.

 \begin{thm} \label{thm:Gif-unqabs}
Assume $H\le Z(G)$, $G$ is finite, and that the pair $(G, H)$ is $\G$-admissible for some subgroup $\G\le\Aut_H(G)$. 
Then every ergodic faithful central $H$-modular operator space admits a unique ($\G$-) reasonable abstract operator space $G$-ification. \end{thm}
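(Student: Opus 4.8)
The plan is to deduce Theorem~\ref{thm:Gif-unqabs} from the concrete version, Theorem~\ref{thm:Gif-unq}, by passing back and forth through Theorem~\ref{absco}. First I would establish existence: given an ergodic faithful central $H$-modular operator space $X$, Theorem~\ref{thm:Gif-unq} furnishes a concrete $\G$-reasonable operator space $G$-ification $(V,\pi,\Pi,\cK)$ of $X$. As remarked immediately after Definition~\ref{def:abs-Gif}, every concrete operator space $G$-ification may be viewed as an abstract one: $V = \bigoplus_{gH\in G/H}\Pi(X)\pi(g)$ is a $G$-modular operator space with $hv=vh$ for $h\in H$, $v\in V$, and $\Pi$ is an $H$-intertwining complete isometry of $X$ into the $\pi(G)$-commutant part of $V$. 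Moreover, the $\Aut_H(G)$-action on this abstract $G$-ification, defined via the linear isomorphism $V\cong X_G$, is exactly the one coming from the $\Aut_H(G)$-action on $X_G$; since the concrete $G$-ification is $\G$-reasonable, this action is by complete isometries, so $(V,\Pi)$ is a $\G$-reasonable abstract operator space $G$-ification of $X$.

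Next I would prove uniqueness. Suppose $(V,\Phi)$ and $(W,\Psi)$ are two $\G$-reasonable abstract operator space $G$-ifications of $X$. By Theorem~\ref{absco}, each of them is identifiable, in the sense defined after Definition~\ref{def:abs-Gif}, with a concrete operator space $G$-ification: there are unitary representations $\pi,\pi'$ of $G$ on Hilbert spaces $\cK,\cK'$ and completely isometric representations realizing $(V,\Phi)$ as $(\sum_{g\in G}\Pi(X)\pi(g),\pi,\Pi,\cK)$ and $(W,\Psi)$ as $(\sum_{g\in G}\Pi'(X)\pi'(g),\pi',\Pi',\cK')$, where $\Pi=\widetilde\Psi\circ\Phi$ and $\Pi'$ similarly. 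The identification maps $\Phi(x)g\mapsto \Pi(x)\pi(g)$ are complete isometries intertwining the $G$-actions, and hence also intertwining the induced $\Aut_H(G)$-actions (both are computed through the common algebraic isomorphism with $X_G$); in particular, $\G$-reasonableness transfers, so both concrete $G$-ifications are $\G$-reasonable. Now Theorem~\ref{thm:Gif-unq} applies: the map $\Pi(x)\pi(g)\mapsto\Pi'(x)\pi'(g)$ extends to a complete isometry of the two concrete $G$-ifications. Composing with the two identification maps gives a surjective complete isometry $j:V\to W$ with $j(\Phi(x)g)=\Psi(x)g$, which is precisely the identification of $(V,\Phi)$ with $(W,\Psi)$ required by Definition~\ref{def:abs-Gif}.

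The only genuinely delicate point is bookkeeping with the $\Aut_H(G)$-action: one must check that the abstract $\Aut_H(G)$-action defined on $V$ (via $V\to X_G$, $\sum_{Hg}\Pi(x_g)\pi(g)\mapsto(g\mapsto x_g)$) agrees, under the identification produced by Theorem~\ref{absco}, with the concrete action $\Psi\mapsto\Psi\sigma^{-1}$ on $\widetilde\Pi(X_G)$ coming from Lemma~\ref{lem:Aut-cov-rep}, so that "$\G$-reasonable abstract $G$-ification" really does correspond to "$\G$-reasonable concrete $G$-ification." This is immediate once one notes that all these actions are just the left-translation-by-$\Aut_H(G)$ action $\sigma\cdot\vf=\vf\circ\sigma^{-1}$ on $X_G$ transported along the various linear isomorphisms, and that the identification maps of Theorem~\ref{absco} are built to respect the decomposition $\bigoplus_{gH\in G/H}\Phi(X)g$ and hence commute with these transports. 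Everything else is a direct appeal to Theorem~\ref{absco} and Theorem~\ref{thm:Gif-unq}, so I would keep the write-up short, emphasizing only this compatibility check.
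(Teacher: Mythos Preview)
Your proposal is correct and follows exactly the route the paper takes: the paper's proof of Theorem~\ref{thm:Gif-unqabs} is a one-liner observing that by Theorem~\ref{absco} every abstract $G$-ification is identifiable with a concrete one, so that Theorem~\ref{thm:Gif-unq} applies. Your write-up is in fact more careful than the paper's, since you spell out the compatibility of the $\Aut_H(G)$-actions needed to transfer $\G$-reasonableness through the identification; this is a genuine (if routine) point that the paper leaves implicit.
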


Indeed by Theorem  \ref{absco} any abstract operator space $G$-ification is identifiable with a concrete  $G$-ification.

\section{Applications, further properties and equivalent descriptions} \label{applns} 

\subsection{A matrix representation of the $G$-ification} \label{marp}

An essential feature of the theory of the complexification $X_c$ of an operator space $X$ is, as we said in the introduction, that $X_c$ is representable completely isometrically as 
the subspace of $M_2(X)$ consisting of the matrices in equation (\ref{ofr}).
    Similarly for the quaternification.   We now establish the same fact for the $G$-ification. 
    Just one of the very many advantages of this is that the previously elusive norm on the $G$-ification becomes a well understood 
and tractable norm, involving only  $X$, of such matrices.  Indeed 
this norm is a simple artifact of the operator space structure (i.e.\ given matrix norms) of $X$.   Several explicit examples are displayed in  the last section of our paper.

We assume below that $G$ is finite and $H\le Z(G)$.
Suppose that $\{ g_i : i = 1, \cdots , m \}$ are a full set of mutually inequivalent representatives of $G/H$, where $m = |G/H|$.   We take $g_1 = 1$.   For each pair $i, j$ there exists $r_{ij} \in \{ 1, \cdots, m \}$ with  $g_i g_j^{-1} \in g_{r_{ij}} H$.  
For each $i,j$ define fixed elements $h_{ij} \in H$ so that $g_i g_j^{-1} = g_{r_{ij}} h_{ij}$.   Thus we have $r_{ii} = h_{ii} = h_{i1} = 1$, and $r_{i1} = i$, for all $i$.
Note that for any $i$, the sequence $(r_{ij})_{j=1}^m$ is a permutation  of $\{ 1, \cdots , m \}$.   Similarly, 
for any $j$, the sequence $(r_{ij})_{i=1}^m$ is a permutation of $\{ 1, \cdots , m \}$.      For any $H$-modular operator space $X$ we define 
{\em the matrix form} $G(X)$ of the $G$-ification to be the subspace 
$$G(X) \, = \, \{ \, [h_{ij}^{-1} \, x_{r_{ij}}] \, \in \, M_m(X) \, : \, x_1 , \cdots , x_m \in X \, \} .$$
 This is an operator space.  Define $V_k = [\delta^k_{i,j} h_{ij}^{-1}]
\in M_m(H)$ for $k = 1, \cdots, m$,  where  $h_{ij}$ are as above, and $\delta^k_{i,j} = 1$ if $r_{ij} = k$, and $\delta^k_{i,j} = 
0$ otherwise.     These turn out to be unitary (multipliers), and $G(X) = \oplus_k \, V_k \, (I_m \otimes X)$.

\begin{lemma} \label{un}   Assume $H\le Z(G)$, and let $\cH$ be a Hilbert space with $H$ acting faithfully
as unitaries on $\cH$.
Consider the canonical unitary $W : \cH_G \cong \cH^{(m)}$ taking $\varphi$ to 
$\sqrt{m} \, [\varphi(g_i)]$.
Then identifying $V_k \in M_m(H)$ above with an operator on $\cH^{(m)}$ via the 
inclusions $M_m(H) \subset  M_m(B(\cH)) \cong B(\cH^{(m)})$ we have 
 $V_k = W \pi_{\cH}(g_k) W^*$.  Also, $h \otimes I_m =  W \pi_{\cH}(h) W^*$ for $h \in H$.    
Moreover $G(X) = \oplus_k \, (I_m \otimes X) \, V_k = \oplus_k \, V_k \, (I_m \otimes X)$
for any central $H$-modular subspace $X \subset B(\cH)$. 
\end{lemma}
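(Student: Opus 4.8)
The plan is to verify the three stated identities by direct computation, exploiting that the normalization constant in $W$ cancels in every conjugation and that the only substantive inputs are the combinatorics of the coset data $g_i, r_{ij}, h_{ij}$ and the centrality hypothesis. First I would record the formula for $W^*$: since $W$ is unitary (as asserted) and $W\varphi = \sqrt m\,[\varphi(g_i)]$, its inverse $W^* = W^{-1}$ must send $\zeta = (\zeta_1,\dots,\zeta_m)$ to the element $\psi \in \cH_G$ with $\psi(g_i) = \tfrac1{\sqrt m}\zeta_i$, equivalently $\psi(g_i h) = \tfrac1{\sqrt m}\theta(h^{-1})\zeta_i$ for $h \in H$ using the defining relation of $\cH_G$. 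Every computation below begins from this.

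For $V_k = W\pi_\cH(g_k)W^*$: fixing $k$ and $\zeta$ and putting $\psi = W^*\zeta$, one has $\bigl(W\pi_\cH(g_k)W^*\zeta\bigr)_i = \sqrt m\,\psi(g_k^{-1}g_i)$. For each $i$ there is a unique index $j = j(i,k)$ with $g_j \in g_k^{-1}g_i H$, equivalently $r_{ij} = k$; and from $g_i g_j^{-1} = g_{r_{ij}} h_{ij} = g_k h_{ij}$ together with centrality of $H$ one deduces $g_k^{-1} g_i = g_j h_{ij}$, whence $\sqrt m\,\psi(g_k^{-1}g_i) = \theta(h_{ij}^{-1})\zeta_j$. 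The $i$-th component of $V_k\zeta$ is $\sum_\ell \delta^k_{i,\ell}\,\theta(h_{i\ell}^{-1})\zeta_\ell$, and $\delta^k_{i,\ell} = 1$ precisely for the single index $\ell = j(i,k)$, so the two agree. The companion identity $h \otimes I_m = W\pi_\cH(h)W^*$ is the easy case: $\bigl(W\pi_\cH(h)W^*\zeta\bigr)_i = \sqrt m\,\psi(h^{-1}g_i) = \sqrt m\,\psi(g_i h^{-1}) = \theta(h)\zeta_i$, again by centrality, which is the $i$-th component of $(h\otimes I_m)\zeta$. In particular each $\pi_\cH(g)$ is unitarily conjugate via $W$ into $M_m(H)$, re-proving that the $V_k$ are unitary.

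For the matrix form I would work entirely inside $M_m(B(\cH))$, where centrality does all the work. Writing $I_m \otimes x = \mathrm{diag}(x,\dots,x)$ for $x \in X$, each nonzero entry of $V_k$ lies in $\theta(H)$ and so commutes with $x$; hence $V_k(I_m\otimes x) = (I_m\otimes x)V_k$, with $(i,\ell)$ entry $\delta^k_{i,\ell}\,\theta(h_{i\ell}^{-1})x$. Therefore the $(i,\ell)$ entry of $\sum_k V_k(I_m\otimes x_k)$ is $\theta(h_{i\ell}^{-1})x_{r_{i\ell}}$ (only $k = r_{i\ell}$ contributes), which lies in $X$ by $H$-invariance of $X$ and shows $\sum_k V_k(I_m\otimes X) = G(X)$. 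Directness follows since the summands have disjoint matrix supports — each position $(i,\ell)$ carries the single value $r_{i\ell}$ — or concretely since evaluating $\sum_k V_k(I_m\otimes x_k) = 0$ on column $1$, where $r_{i1} = i$ and $h_{i1} = 1$, forces every $x_i = 0$. The same computation with $(I_m\otimes x)V_k$ yields the other equality.

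I expect the one point requiring genuine care to be the coset bookkeeping in the identity $V_k = W\pi_\cH(g_k)W^*$: isolating the correct index $j$, establishing $g_k^{-1}g_i = g_j h_{ij}$, and being scrupulous about which of the relations $g_i g_j^{-1} = g_{r_{ij}} h_{ij}$ (or its inverse) is in play and precisely where centrality of $H$ is used to slide $H$-elements from one side to the other. Everything else is routine verification against the definitions.
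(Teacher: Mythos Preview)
Your proposal is correct and follows essentially the same route as the paper: both verify $V_kW = W\pi_\cH(g_k)$ (you phrase it as $W\pi_\cH(g_k)W^*\zeta = V_k\zeta$, the paper as $V_kW\varphi = W\pi_\cH(g_k)\varphi$) by reducing to the coset identity $g_k^{-1}g_i = h_{ij}g_j$ when $r_{ij}=k$, and both obtain the matrix form by reading off the $(i,\ell)$ entry of $\sum_k V_k(I_m\otimes x_k)$ as $h_{i\ell}^{-1}x_{r_{i\ell}}$. Your write-up is slightly more explicit (formula for $W^*$, the directness argument via the first column), but there is no genuine difference in method.
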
 

\begin{proof}    We have $(\pi(g_k) (\varphi))(g_i) = \varphi (g_k^{-1} g_i)$.   On the other hand, define $\delta^k_{i,j} = 1$ if $r_{ij} = k$, and $\delta^k_{i,j} = 
0$ otherwise.  Then for $\varphi \in \cH_G$ we have 
$$V_k ([\varphi(g_i)]) = [h_{ij}^{-1} \delta^k_{i,j}] [\varphi(g_i)] = [\sum_j \, h_{ij}^{-1} \delta^k_{i,j} \varphi(g_j) ] = [\sum_j \, \delta^k_{i,j} \varphi( h_{ij} g_j) ].$$ 
Now $g_i g_j^{-1} = g_{r_{ij}} h_{ij}$, so that $h_{ij} g_j = g_{r_{ij}}^{-1} g_i$.   Thus the expression at the end of the last centered equation is $[\varphi(g_k^{-1} g_i) ]$.
That is, $V_k ([\varphi(g_i)]) = [(\pi(g_k) (\varphi))(g_i)]$, or $V_k W  \varphi = W \pi(g_k)  \varphi$.  
Similarly $W \pi(h) \varphi = \sqrt{m} \, [h \, \varphi(g_i)] = (h \otimes I_m) W  \varphi,$ which yields $h \otimes I_m =  W \pi_{\cH}(h) W^*$.

 Finally, any $x \in G(X)$ is 
 of form 
$$[h_{ij}^{-1} \, x_{r_{ij}}] =  \sum_{k=1}^m \, (I_m \otimes x_k) V_k =  \sum_{k=1}^m \, V_k \, (I_m \otimes x_k)$$ 
as asserted.  The direct sum assertion is clear from the `permutation matrix form' of the $V_k$, or follows
from the earlier matching fact for $X_G$, with $\pi(g_k)$ in place of $V_k$. \end{proof}

As before, if $H$ acts faithfully  on $\cH$ by unitaries, let $\cC$ be the commutant of $H$ in $\cB(\cH)$.   By the 
lines after Proposition \ref{isinv}  any central $H$-modular  $C^*$-algebra $\cA$   
 also satisfies the conditions of the next result
for some $\cH$ and $\cC$, and $\cA_G$ 
may be viewed as a $C^*$-subalgebra of $\cC_G$. 

\begin{thm} \label{iso} Assume $H\le Z(G)$, and that $\cA$ is a
 $C^*$-subalgebra and a $H$-submodule of $\cC$.
Let $X$ be a central $H$-modular operator space. 
 Then $G(\cA)$ is a $C^*$-algebra  $*$-isomorphic to $\cA_G$, and 
 $G(X) \cong X_G$ completely isometrically and as abstract $G$-ifications of $X$. \end{thm}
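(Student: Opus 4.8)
The plan is to transport the canonical convolution representation of $\cC_G$ on $\cH_G$ over to $\cH^{(m)}$ by conjugating with the unitary $W$ of Lemma~\ref{un}, and to check that under this transport the subspace $X_G$ lands exactly on the matrix form $G(X)$. First I would fix a faithful central $H$-modular representation $(\theta,\rho,\cH)$ of $X$ and identify $X$ with $\rho(X)$, which sits inside $\cC:=\theta(H)'$ as an $H$-submodule; recall from Section~\ref{coan} that $\cC$ is then an $H$-centric involutive algebra, and that the canonical operator space structure on $X_G$ is the one inherited from the injective $*$-homomorphism $\fI:\cC_G\to\cB(\cH_G)$ of Proposition~\ref{prop:conv-rep} via the completely isometric embedding $\Pi=\fI\circ\fE_\cC$ of $X$ (this being independent of all choices by Theorem~\ref{thm:Gif-unq} and the Remark after it; in any case $G(X)$ is defined intrinsically from $X$ and the coset data, so one representation suffices). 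Using Lemma~\ref{lem:pol-id}, the $G$-equivariance of $\fI$, and Lemma~\ref{lem:fIfE=fT}, one writes $\fI(X_G)=\bigoplus_{k=1}^m \pi_\cH(g_k)\{T_G:T\in X\}$, and similarly $\fI(\cC_G)=\bigoplus_{k=1}^m \pi_\cH(g_k)\{T_G:T\in\cC\}$.

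Next I would conjugate by $W:\cH_G\to\cH^{(m)}$, $\varphi\mapsto\sqrt m\,[\varphi(g_i)]$. A one-line computation gives $W T_G W^{*}=I_m\otimes T$ for $T\in\cC$ (since $T_G(\varphi)=T\circ\varphi$), while Lemma~\ref{un} gives $W\pi_\cH(g_k)W^{*}=V_k$; hence $W\fI(X_G)W^{*}=\bigoplus_{k=1}^m V_k\,(I_m\otimes X)=G(X)$ and $W\fI(\cC_G)W^{*}=\bigoplus_{k=1}^m V_k\,(I_m\otimes\cC)=G(\cC)$, the first equality being exactly the defining description of the matrix form (cf.\ the last line of Lemma~\ref{un}). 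Since conjugation by the unitary $W$ is a surjective complete isometry of $\cB(\cH_G)$ onto $M_m(\cB(\cH))$, and a spatial $*$-isomorphism of the $C^{*}$-algebras it carries, it follows that $X_G\cong G(X)$ completely isometrically. When $X=\cA$ is moreover a $C^{*}$-subalgebra and $H$-submodule of $\cC$, then $\cA_G$ is a $C^{*}$-subalgebra of $\cC_G$ (by the lines following Proposition~\ref{isinv}), $\fI$ is an injective $*$-homomorphism, and so $G(\cA)=W\fI(\cA_G)W^{*}$ is a $C^{*}$-algebra $*$-isomorphic to $\cA_G$.

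Finally, to upgrade this to an identification of \emph{abstract $G$-ifications}, I would equip $\cH^{(m)}$ with the unitary representation $g\mapsto W\pi_\cH(g)W^{*}$ of $G$ and embed $X$ into $G(X)$ by $x\mapsto I_m\otimes x$. Since $W$ intertwines this representation with $\pi_\cH$ and carries $\Pi=\fI\circ\fE_\cC$ to $x\mapsto I_m\otimes x$, it identifies the concrete $G$-ification $(\fI(X_G),\pi_\cH,\fI\circ\fE_\cC,\cH_G)$ of $X$ with $(G(X),\,g\mapsto W\pi_\cH(g)W^{*},\,x\mapsto I_m\otimes x,\,\cH^{(m)})$, matching $\fE_X(x)\pi_X(g_k)$ with $(I_m\otimes x)V_k$; by Theorem~\ref{absco} these are then identified as abstract $G$-ifications of $X$, and the identification is the expected one $X_G\ni(g\mapsto x_g)\mapsto[\,h_{ij}^{-1}x_{r_{ij}}\,]\in G(X)$. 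I do not expect a serious obstacle: the only genuinely computational point is the permutation-matrix bookkeeping already carried out in Lemma~\ref{un}, and the only conceptual point is that the transported structure agrees with the canonical operator space structure on $X_G$, which is precisely the content of Theorem~\ref{thm:Gif-unq}; everything else is assembling Proposition~\ref{prop:conv-rep}, Lemma~\ref{lem:fIfE=fT}, and Lemma~\ref{lem:pol-id}.
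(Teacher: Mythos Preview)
Your proposal is correct and is essentially the paper's proof, just packaged spatially: the paper defines $\theta:\cC_G\to G(\cC)$ by $\theta(\varphi)=[\varphi(g_ig_j^{-1})]$, checks (as an ``exercise'') that it is an injective $*$-homomorphism with image $G(\cA)$, and restricts to $X_G$; your map $W\fI(\cdot)W^{*}$ coincides with this $\theta$ once one uses that $V_k$ commutes with $I_m\otimes\cC$. The only genuine difference is cosmetic: by implementing $\theta$ as conjugation by the unitary $W$ of Lemma~\ref{un} you get the $*$-homomorphism and complete-isometry properties for free, whereas the paper leaves the former as an exercise and deduces the latter from $\theta$ being a $*$-isomorphism of $C^*$-algebras.
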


\begin{proof}    Define $\theta : \cC_G \to G(\cC)$ by $\theta(\varphi) = 
[\varphi(g_i g_j^{-1})]$.   
It is an exercise that $\theta$ is a $*$-homomorphism, as is its restriction to $\cA_G$. 
It is one-to-one since if $\varphi(g_i ) = 0$ for all $i$ then clearly $\varphi(g) = 0$ for all $g \in G$.  
We have $$[\varphi(g_i g_j^{-1})] = [\varphi(g_{r_{ij}} \, h_{ij})] = [h_{ij}^{-1} \, \varphi(g_{r_{ij}})] =  \sum_{k=1}^m \, (I_m \otimes \varphi(g_k)) V_k .$$ 
It follows that  $\theta(\cA_G) \subset G(\cA)$.
Given $x_1 , \cdots , x_m \in \cA$  with $[h_{ij}^{-1} \, x_{r_{ij}}] \in G(\cA)$ define $\varphi_0 \in \cA_G$ by  $\varphi_0(g_i h) = h^{-1} x_i$  for $h \in H$.
We have $$\varphi_0(g_i g_j^{-1})= \varphi(g_{r_{ij}} \, h_{ij}) = h_{ij}^{-1} \, \varphi(g_{r_{ij}}) = h_{ij}^{-1} \, x_{r_{ij}} .$$
So $\theta(\cA_G) = G(\cA)$. 

If $X$ is a central $H$-modular operator space then we may assume as in the proof of Theorem \ref{thm:Gif-unq}
  that $X$ is represented completely isometrically  as an $H$-submodule of  $\cC$.
 The restriction  of $\theta$ to $X_G$ is completely isometric, and the argument in the last few lines of the last paragraph shows that $\theta(X_G) = G(X)$.  
 If $x_k$ and $\varphi_0$ are as above then since $\theta( \sum_{k=1}^m \, \Pi(x_k) \, \pi(g_k)) =  \theta(\varphi_0) = \sum_{k=1}^m \, (I_m \otimes x_k) \, V_k$, the identification
 is as abstract $G$-ifications. 
\end{proof}

In many of the following proofs
we view the $G$-ification $X_G$ as the subspace
$G(X) \subset M_m(X)$ above where $m = |G/H |$.

\begin{cor}\label{thm:Gif-map-isom}
Assume $H\le Z(G)$. 
Let $X$ and $Y$ be central $H$-modular  operator spaces. An $H$-intertwiner  $T:X\to Y$ is completely isometric (resp.\ completely contractive) if and only if 
 $T_G$ is completely isometric (resp.\ completely contractive).
\begin{proof}
Clearly $T_G$ may be identified with $T \otimes I_m$ restricted to $G(X)$. 
\end{proof}
\end{cor}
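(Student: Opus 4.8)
The plan is to deduce the statement from the matrix picture of the $G$-ification in Theorem \ref{iso}. Put $m = |G/H|$. By that theorem we may identify $X_G$ with $G(X) \subset M_m(X)$ and $Y_G$ with $G(Y) \subset M_m(Y)$, completely isometrically and as abstract $G$-ifications. The first step is to check that under these identifications the map $T_G$ is nothing but the restriction to $G(X)$ of the entrywise amplification $T \otimes \id_{M_m} : M_m(X) \to M_m(Y)$. This amplification does carry $G(X)$ into $G(Y)$: a typical element of $G(X)$ has the form $[\, h_{ij}^{-1} x_{r_{ij}} \,]$ with $x_1, \dots, x_m \in X$, and since $T$ is an $H$-intertwiner, $T(h_{ij}^{-1} x_{r_{ij}}) = h_{ij}^{-1} T(x_{r_{ij}})$, i.e.\ the corresponding element of $G(Y)$ built from $T(x_1), \dots, T(x_m)$. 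To match $T_G$ with this map I would use the explicit isomorphism $\theta$ of Theorem \ref{iso}, namely $\theta(\vf) = [\,\vf(g_i g_j^{-1})\,]$: since $T_G(\vf) = T \circ \vf$, we get $\theta(T_G(\vf)) = [\, T(\vf(g_i g_j^{-1})) \,]$, which is precisely the entrywise image of $\theta(\vf)$.

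Granting that identification, the forward implications are immediate. If $T$ is completely contractive (resp.\ completely isometric), then so is the amplification $T \otimes \id_{M_m}$ by the usual behaviour of amplifications (passing to $M_n(M_m(X)) \cong M_{nm}(X)$), and restricting a completely contractive (resp.\ completely isometric) map to an operator subspace preserves that property; hence $T_G$ is completely contractive (resp.\ completely isometric).

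For the converse I would use that $\fE_X : X \to X_G$ is an $H$-equivariant completely isometric embedding — in the matrix picture it is $x \mapsto I_m \otimes x \in G(X)$ — and likewise for $\fE_Y$, together with the naturality identity $\fE_Y \circ T = T_G \circ \fE_X$, which is immediate from $T_G(\vf_x)(g) = T(\vf_x(g))$ (this equals $g^{-1}T(x)$ for $g \in H$ and $0$ otherwise, i.e.\ $\vf_{T(x)}(g)$). Thus $T = \fE_Y^{-1} \circ T_G \circ \fE_X$ realises $T$ as a compression of $T_G$ between completely isometric copies of $X$ and $Y$, so complete contractivity (resp.\ complete isometry) of $T_G$ descends to $T$.

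There is no genuine obstacle here; the one point that must be handled with a little care is the first step — verifying through the isomorphism $\theta$ of Theorem \ref{iso} that $T_G$ really is the entrywise application of $T$ restricted to $G(X)$ — after which the conclusion follows from the elementary facts that amplification and restriction to subspaces both preserve complete contractivity and complete isometry.
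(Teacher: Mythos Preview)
Your proof is correct and follows exactly the paper's approach: the paper's one-line argument is precisely that $T_G$ may be identified with $T \otimes I_m$ restricted to $G(X)$, and you have simply spelled out the details of that identification and the easy consequences in both directions.
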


\begin{lemma}   \label{compl} Assume $H\le Z(G)$. View the $G$-ification $X_G$ of a central $H$-modular space $X$ as the subspace
$G(X) \subset M_m(X)$ as 
above, where $m = |G/H |$.
There is a completely contractive projection from $M_m(X)$ onto $G(X)$.  
\end{lemma}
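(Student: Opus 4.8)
The natural approach is to average over the group $H$ (or, essentially equivalently, over the set of representatives $g_1,\dots,g_m$ of $G/H$) to produce a conditional-expectation-type projection. Recall from Lemma~\ref{un} that $G(X) = \oplus_k V_k (I_m \otimes X)$, where the $V_k \in M_m(H)$ are the ``permutation-with-$H$-entries'' unitaries satisfying $V_k = W\pi_{\cH}(g_k) W^*$, and that $h \otimes I_m = W\pi_{\cH}(h) W^*$. The first step is to write down the candidate map explicitly: for $[x_{ij}] \in M_m(X)$, set
\[
\Phi([x_{ij}]) \, := \, \frac{1}{|H|} \sum_{h \in H} (h \otimes I_m)^* \, [x_{ij}] \, (h \otimes I_m) ,
\]
i.e.\ averaging the conjugation action of the unitaries $h \otimes I_m$ on $M_m(X) \subset M_m(B(\cH))$. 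Since each $h$ acts on $X$ by a complete isometry and the conjugation by a unitary is a complete isometry on $M_m(B(\cH))$, $\Phi$ is a completely contractive (indeed unital completely positive) idempotent, being an average of complete isometries fixing the relevant subspace.

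The second step is to identify the range of $\Phi$ with $G(X)$. Here one uses that $G(X)$ is exactly the fixed-point space of the conjugation action $T \mapsto (h\otimes I_m)^* T (h \otimes I_m)$: on one hand, each $V_k$ commutes with $h \otimes I_m$ up to the scalar structure of $H$ being central, so elements of $\oplus_k V_k(I_m \otimes X)$ are genuinely fixed (using that $H \le Z(G)$ forces $h\otimes I_m$ to commute with each $V_k$, together with $X \subset \cC = H'$ so $I_m \otimes X$ commutes with $h \otimes I_m$). On the other hand, an element of $M_m(X)$ fixed by all these conjugations must, when transported via $W$ to $B(\cH_G)$, commute with $\pi_{\cH}(h)$ for all $h$; combined with the polarization/decomposition $M_m(X) = \oplus_k V_k(I_m \otimes M_{?}(X))$-type reasoning and the definition of $G(X)$, this pins the fixed-point space down to be exactly $G(X)$. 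Concretely it is cleaner to check directly on matrix entries: computing $(h\otimes I_m)^*[x_{ij}](h\otimes I_m)$ entrywise and averaging over $h\in H$ lands precisely in the ``$h_{ij}^{-1} x_{r_{ij}}$'' shape because the off-``diagonal-in-$G/H$'' entries of a general matrix get killed while the constrained entries are preserved — this is the same averaging phenomenon as in the proof that $\fE_X(X) = (X_G)^{\G}$ (Proposition~\ref{ifgFP}) and as the computation $|H|\Psi(g)(x)$ appearing in the injectivity argument in Proposition~\ref{prop:conv-rep}.

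\textbf{Main obstacle.} The only real subtlety is verifying that the range of $\Phi$ is precisely $G(X)$ and not something larger or smaller — i.e.\ that averaging the $h\otimes I_m$-conjugation over $H$ both (a) fixes every element of $G(X)$ and (b) produces nothing outside $G(X)$. Part (a) is where centrality $H \le Z(G)$ is genuinely needed (so that $h \otimes I_m$ commutes with each $V_k$), and part (b) requires a short computation with the cosets $g_i g_j^{-1} = g_{r_{ij}} h_{ij}$ showing that after averaging, the $(i,j)$ entry depends only on the coset $g_i g_j^{-1} H$, in the precise form $h_{ij}^{-1} x_{r_{ij}}$. Once this identification is in hand, complete contractivity is immediate since $\Phi$ is an average of completely isometric maps (conjugations by the unitaries $h \otimes I_m \in M_m(B(\cH))$, with $X$ acting by complete isometries under each $h$), and $\Phi$ is idempotent with range $G(X)$ by the above, so it is the desired completely contractive projection.
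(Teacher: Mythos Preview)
Your averaging map is trivial on $M_m(X)$. Since $X$ is a \emph{central} $H$-modular operator space, we may take $X \subset \cC = \theta(H)'$, so that $\theta(h)$ commutes with every $x \in X$. But $h \otimes I_m$ is the block-diagonal matrix ${\rm diag}(\theta(h),\dots,\theta(h))$, and hence for any $[x_{ij}] \in M_m(X)$ we have
\[
(h \otimes I_m)^* \, [x_{ij}] \, (h \otimes I_m) \; = \; [\,\theta(h)^{-1} x_{ij} \, \theta(h)\,] \; = \; [x_{ij}].
\]
Thus your $\Phi$ is the identity on $M_m(X)$, not a projection onto $G(X)$. You yourself note that ``$I_m \otimes X$ commutes with $h \otimes I_m$'' --- but the same reasoning applies entrywise, so \emph{all} of $M_m(X)$ commutes with $h\otimes I_m$, not just the diagonal copy. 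The parenthetical ``or, essentially equivalently, over the set of representatives $g_1,\dots,g_m$'' is also not a fix: conjugating by the $V_k$ does not fix $G(X)$ unless $G$ is abelian, since $V_k (I_m\otimes x_j)V_\ell V_k^{-1} = (I_m\otimes x_j)\,V_k V_\ell V_k^{-1}$ and $g_k g_\ell g_k^{-1} \ne g_\ell$ in general.

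The paper's argument is genuinely different: it does \emph{not} use a conjugation average. It first defines a completely contractive map $\Phi_0 : M_m(X) \to I_m \otimes X$ by averaging the diagonal entries, $\Phi_0([x_{ij}]) = (\frac{1}{m}\sum_k x_{kk})\,I_m$, and then sets
\[
E(a) \; = \; \frac{1}{m}\sum_{k=1}^m \Phi_0(V_k^* a)\,V_k .
\]
This is a one-sided ``polarization'' formula (cf.\ the identity $\varphi = \sum_k g_k\,E(g_k^{-1}\varphi)$ after Lemma~\ref{lem:pol-id}), not a conjugation, and that is exactly what makes it land in $G(X) = \oplus_k (I_m\otimes X)\,V_k$ while still being completely contractive.
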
 

\begin{proof}  Define $\Phi([x_{ij}]) = (\frac{1}{m} \sum_k \,  x_{kk}) \, I_m \in G(X)$; this fixes 
$I_m \otimes X$. Then 
consider the average  $$E(a) = 
 \frac{1}{m} \sum_k \,  \Phi(V_k^* a)  \, V_k \in G(X).$$   
Here $a \in M_m(X) \subset M_m(B(H)) \cong B(H_G)$. 
 Since $V_k \in M_n(H)$, the expression $V_k^* a$ is in $M_m(X)$ 
and $\Phi(V_k^* a) \in X \otimes I_m$. 
 Then $E$ is completely contractive and  fixes $G(X) = \sum_k \, (I_m \otimes  X) \,V_k$.  Indeed the latter `fixing' 
 is the `polarization identity' (see the remarks after Lemma~\ref{lem:pol-id}), since $\Phi$ extends the canonical expectation of $G(X)$ onto $I_m \otimes  X$.    \end{proof}

\subsection{Subspaces, quotients and subgroups}

We assume below that $G$ is finite and $H\le Z(G)$.  Several known properties  of the complexification have appropriate versions for the $G$-ification, but given the length
of our paper we just give a couple. 

\begin{prop}
We have $X_G / Y_G \cong (X/Y)_G$ completely isometrically if $Y$  is a closed $H$-invariant subspace of a central $H$-modular operator space $X$.  
\begin{proof} 
Consider the canonical completely contractive map  $$q_G : X_G \to (X/Y)_G \subset 
M_m(X/Y) = M_m(X)/M_m(Y).$$  Since $q_m : M_m(X) \to M_m(X/Y)$ is a quotient map we may lift an element $z = q_G(x)$ of norm $< 1$ in $(X/Y)_G$, to $x \in X_G$ plus $y = [y_{ij}] \in M_m(Y)$ 
with $\| x + y \| < 1$.  Let $E$ be the projection in Lemma \ref{compl}. 
Then $E(y) \in G(Y)$ and $\| x + E(y) \| = \| E(x + y) \| < 1$ has quotient $q_G(x + E(y)  ) = q_G(x) = z$.   A similar argument works at the matrix level.  
\end{proof} 
\end{prop}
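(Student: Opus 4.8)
The plan is to realize both $X_G/Y_G$ and $(X/Y)_G$ inside ambient matrix spaces via the matrix form $G(\,\cdot\,)$ of Theorem~\ref{iso}, and then exploit the completely contractive projection from Lemma~\ref{compl} to lift representatives. Concretely, first I would fix a full set $\{g_1,\dots,g_m\}$ of coset representatives of $G/H$ with $g_1 = 1$, and identify $X_G$ with $G(X) \subset M_m(X)$ and $(X/Y)_G$ with $G(X/Y) \subset M_m(X/Y)$. Since $Y$ is a closed $H$-invariant subspace of the central $H$-modular space $X$, the quotient $X/Y$ inherits a central $H$-modular structure (the $H$-action descends, and one checks it remains completely isometric on the quotient operator space), so $(X/Y)_G$ makes sense and Theorem~\ref{iso} applies. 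I would also note $M_m(X)/M_m(Y) \cong M_m(X/Y)$ completely isometrically, which is standard.

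Next I would identify the natural map. The quotient map $X \to X/Y$ is an $H$-intertwiner and completely contractive, so its $G$-ification $q_G : X_G \to (X/Y)_G$ is completely contractive (one can invoke Corollary~\ref{thm:Gif-map-isom}, or simply observe $q_G$ is $q_m := q \otimes I_m$ restricted to $G(X)$, where $q_m : M_m(X) \to M_m(X/Y)$ is the complete quotient map). It is surjective because $G(X/Y)$ consists of matrices $[h_{ij}^{-1}\bar x_{r_{ij}}]$ and each $\bar x_k \in X/Y$ lifts to some $x_k \in X$. To prove it is a complete quotient map, I take $z \in (X/Y)_G = G(X/Y)$ with $\|z\|_{M_n((X/Y)_G)} < 1$ (working at a fixed but arbitrary matrix level $n$, which I would fold into the main argument by replacing $X$ with $M_n(X)$ throughout). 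Viewing $z$ inside $M_m(X/Y) = M_m(X)/M_m(Y)$ and using that $q_m$ is a complete quotient map, I lift $z$ to an element of $M_m(X)$ of norm $< 1$; I may write this lift as $x + y$ where $x \in X_G = G(X)$ is any fixed lift of $z$ and $y = [y_{ij}] \in M_m(Y)$.

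Now comes the key point: apply the completely contractive idempotent $E : M_m(X) \to G(X)$ of Lemma~\ref{compl}. Since $E$ is $M_m(Y)$-compatible in the sense that $E$ maps $M_m(Y)$ into $G(Y)$ — this holds because $E(a) = \frac{1}{m}\sum_k \Phi(V_k^* a) V_k$ with $V_k \in M_m(H)$ and $\Phi$ the canonical conditional expectation, and $Y$ being $H$-invariant forces $V_k^* M_m(Y) \subseteq M_m(Y)$ and $\Phi(M_m(Y)) \subseteq Y \otimes I_m$ — we get $E(y) \in G(Y)$. Then $x + E(y) = E(x+y) \in G(X)$ has norm $\| E(x+y)\| \le \|x+y\| < 1$, and $q_G(x + E(y)) = q_G(x) + q_G(E(y)) = z + 0 = z$, because $q_G$ annihilates $G(Y)$ (as $q$ annihilates $Y$). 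This exhibits $z$ as the image of an element of norm $< 1$, so $q_G$ is a complete quotient map, giving the completely isometric identification $X_G/Y_G \cong (X/Y)_G$ provided $\ker q_G = Y_G$, which is clear since $q_G(\varphi) = 0$ iff $\varphi(g) \in Y$ for all $g$, i.e.\ $\varphi \in Y_G = G(Y)$.

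The only mild obstacle I anticipate is bookkeeping the matrix levels uniformly: the cleanest route is to carry out the lifting argument once with $M_n(X)$ in place of $X$ (noting $M_n(G(X)) = G(M_n(X))$ under the shuffle identification, and that $E$ tensored with $\mathrm{id}_{M_n}$ is still the Lemma~\ref{compl} projection for $M_n(X)$), so that a single argument yields all $n$. The verification that $E$ respects the ideal $M_m(Y)$ — which is what makes the conditional-expectation trick interact correctly with the quotient — is the substantive step, but it follows directly from $H$-invariance of $Y$ and the explicit form of $E$; everything else is routine operator-space quotient theory.
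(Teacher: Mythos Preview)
Your proposal is correct and follows essentially the same approach as the paper: realize everything in the matrix form $G(\cdot)$, lift via the ambient quotient map $q_m$, and then correct the lift using the completely contractive projection $E$ of Lemma~\ref{compl}. You have in fact supplied several details the paper leaves implicit (why $X/Y$ is central $H$-modular, why $E$ preserves $M_m(Y)$, the kernel computation, and the matrix-level bookkeeping).
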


\begin{prop}  
Assume that $(G, H)$ is $\G$-admissible for some subgroup $\G\le\Aut_H(G)$.
Let $X$ be a central $H$-modular operator space. Then the restriction of $X_G$ to $Z(G)$ is completely isometrically isomorphic to $X_{Z(G)}$, that is,
\[
X_G|_{Z(G)} := \{\vf|_{Z(G)} : \vf\in X_G\} = X_{Z(G)} .
\]
\begin{proof}
To see this let $\Lambda=\{\sigma|_{Z(G)} : \sigma\in \G\}\le \Aut_H(Z(G))$.  Then the pair $(Z(G), H)$ is $\Lambda$-admissible.
It is obvious from Definition~\ref{def:conc-Gif} that the canonical representation of the $G$-ification $X_G$, when restricted to $X_G|_{Z(G)}$ is a concrete $Z(G)$-ification of $X$. This is also $\Lambda$-reasonable by Lemma~\ref{lem:Aut-cov-rep}. Hence the result follows from Theorem~\ref{thm:Gif-unq}.
\end{proof}
\end{prop}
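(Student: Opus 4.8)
The plan is to use the uniqueness theorem (Theorem~\ref{thm:Gif-unq}) applied to the pair $(Z(G), H)$, so the main work is to check that the restriction of the canonical concrete $G$-ification of $X$ is a $\Lambda$-reasonable concrete $Z(G)$-ification of $X$, where $\Lambda = \{\sigma|_{Z(G)} : \sigma \in \G\}$. First I would verify the admissibility transfer: since $H \le Z(G)$ and $H \le Z(Z(G))$ trivially, and since for $g \in Z(G) \setminus H$ the $\G$-admissibility of $(G,H)$ gives $\sigma \in \G$ with $g^{-1}\sigma(g) = h$ for any prescribed $h \in H$, one notes that $\sigma$ preserves $Z(G)$ (automorphisms preserve the center) and $\sigma(g) \in Z(G)$, so $\sigma|_{Z(G)} \in \Lambda$ witnesses $h \in \{g^{-1}\lambda(g) : \lambda \in \Lambda\}$. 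Hence $(Z(G), H)$ is $\Lambda$-admissible, and $H \le Z(Z(G))$, so Theorem~\ref{thm:Gif-unq} applies to $(Z(G),H)$ and $X$.

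Next I would set up the restriction concretely. Take the canonical concrete $G$-ification $(V, \pi, \Pi, \cK)$ of $X$ produced in the proof of Theorem~\ref{thm:Gif-unq}, and consider $\pi|_{Z(G)}$ together with $\Pi$. Condition (i) of Definition~\ref{def:conc-Gif} holds since $\Pi(X)$ commutes with $\pi(G) \supseteq \pi(Z(G))$; condition (ii) holds since $H \le Z(G)$ and (ii) already held for $G$; condition (iii) requires identifying $\mathrm{span}\{\Pi(x)\pi(z) : x \in X, z \in Z(G)\}$ with $X_{Z(G)}$ as a vector space via $\fE_X(x)\pi_X(z) \mapsto \Pi(x)\pi(z)$, which follows from the polarization/direct-sum description (Lemma~\ref{lem:pol-id} applied to the subgroup $Z(G)$ with coset representatives for $Z(G)/H$). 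So $(\mathrm{span}\{\Pi(x)\pi(z)\}, \pi|_{Z(G)}, \Pi, \cK)$ is a concrete $Z(G)$-ification of $X$, and this span is exactly $X_G|_{Z(G)}$ under the identification $\vf \leftrightarrow \vf|_{Z(G)}$.

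For reasonableness: the $\Lambda$-action on the $Z(G)$-ification is implemented by the unitaries $u_{\sigma}$ of Lemma~\ref{lem:Aut-cov-rep} — more precisely, for $\sigma \in \G$ the unitary $u_\sigma$ on $\cH_G$ conjugates $\fI(\Psi)$ to $\fI(\Psi\sigma^{-1})$, and restricting attention to the convolution operators coming from $Z(G)$ shows the $\Lambda = \{\sigma|_{Z(G)}\}$-action is spatial, hence by complete isometries at every matrix level. Therefore the $Z(G)$-ification $X_G|_{Z(G)}$ is $\Lambda$-reasonable. Finally, $X$ is still ergodic, faithful and central $H$-modular, so Theorem~\ref{thm:Gif-unq} for $(Z(G), H)$ gives that any two $\Lambda$-reasonable concrete $Z(G)$-ifications are completely isometric via the canonical map on spanning elements; comparing $X_G|_{Z(G)}$ with the canonical $X_{Z(G)}$ yields $X_G|_{Z(G)} \cong X_{Z(G)}$ completely isometrically, with the identification matching $\vf|_{Z(G)}$ with the corresponding element of $X_{Z(G)}$.

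\textbf{Main obstacle.} The only delicate point is bookkeeping in condition (iii): one must be careful that a full set of coset representatives for $Z(G)/H$ can be taken inside a set of coset representatives for $G/H$, and that the direct-sum decomposition of Lemma~\ref{lem:pol-id} restricts correctly — i.e.\ that distinct $Z(G)/H$-summands remain independent after restriction and that no summand indexed by $g \notin Z(G)$ interferes. This is essentially automatic because $\fI(\fE_X(x))\pi(z)$ evaluated on $Z(G)$ only sees the values of $\vf$ on $zH \subseteq Z(G)$, but it should be stated cleanly rather than waved through; everything else is a direct application of the already-established machinery.
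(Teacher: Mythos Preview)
Your proposal is correct and follows essentially the same route as the paper's proof: define $\Lambda=\{\sigma|_{Z(G)}:\sigma\in\G\}$, check that $(Z(G),H)$ is $\Lambda$-admissible, observe that restricting the canonical concrete $G$-ification gives a concrete $Z(G)$-ification which is $\Lambda$-reasonable via Lemma~\ref{lem:Aut-cov-rep}, and invoke Theorem~\ref{thm:Gif-unq}. The paper's version is terser and leaves the verification of Definition~\ref{def:conc-Gif}(i)--(iii) and the admissibility transfer to the reader, whereas you spell these out (and correctly flag the coset-representative bookkeeping in (iii) as the only point needing care); but the strategy and the lemmas used are identical.
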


\begin{thm}\label{thm:Gif-stages}
Assume $H_1\le H_2\le Z(G)$ and that all the pairs $(G, H_2)$, $(G, H_1)$, and $(H_2, H_1)$ are $\G$-admissible for some subgroup $\G\le\Aut_H(G)$.
Let $X$ be a central $H_1$-modular operator space that is $H_1$-ergodic. Then 
the $G$-ification of the $H_2$-ification $X_{H_2}$ of $X$ coincides with the $G$-ification of $X$, that is,
\[
X_G = (X_{H_2})_G .
\]
\begin{proof}
This follows immediately from Theorem~\ref{thm:Gif-unq}.  Indeed if $\{ k_l \}$ is a full set of coset representatives for $H_2/H_1$, and if $\{ b_r \}$ is a full set of coset representatives for $G/H_2$ then $\{ k_l b_r \}$ are a full set of mutually inequivalent coset representatives in
$G/H_1$.   We have that $(X_{H_2})_G$  is an operator bimodule over $C^*(G)$ and it has a 
decomposition $\sum_{r} \,  \, X_{H_2} \, b_r = \sum_{r,l} \, X \, k_l b_r$.  So by uniqueness it is the $G$-ification of $X$. 
\end{proof}
\end{thm}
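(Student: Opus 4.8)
The plan is to reduce the statement to an application of the uniqueness theorem, Theorem~\ref{thm:Gif-unq}, by exhibiting $(X_{H_2})_G$ as a $\G$-reasonable concrete (or abstract) operator space $G$-ification of $X$. First I would fix a full set $\{k_l\}$ of coset representatives for $H_2/H_1$ and a full set $\{b_r\}$ of coset representatives for $G/H_2$, and observe that $\{k_l b_r\}$ is a full set of mutually inequivalent coset representatives for $G/H_1$; this is routine since $H_1 \le H_2$ and both quotients are by subgroups of $Z(G)$. Next I would recall that, by the remarks after Theorem~\ref{ces} and Proposition~\ref{isinv} (and the remark following it), $X_{H_2}$ is again a central $H_2$-modular operator space, hence a central $H_1$-modular operator space, sitting inside an operator $C^*(G)$-bimodule structure once we form $(X_{H_2})_G$; and $(X_{H_2})_G$ carries commuting left and right $G$-actions with $\Phi(X_{H_2})$ in the relative commutant of $G$. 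The decomposition $(X_{H_2})_G = \bigoplus_r X_{H_2}\, b_r = \bigoplus_{r,l} X\, k_l b_r$ then shows that $(X_{H_2})_G$, together with the embedding of $X$ (via $X \hookrightarrow X_{H_2} \hookrightarrow (X_{H_2})_G$), satisfies all the defining conditions of an abstract operator space $G$-ification of $X$ in the sense of Definition~\ref{def:abs-Gif}.

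The second main step is to verify the reasonable condition: the $\Aut_{H_1}(G)$-action (or at least the $\G$-action) on this abstract $G$-ification of $X$ is by complete isometries. Here I would use that the $G$-ification operator space structure on $(X_{H_2})_G$, when $X_{H_2}$ is itself the canonical $H_2$-ification, is the unique $\G$-reasonable one (applying Theorem~\ref{thm:Gif-unq} to the pair $(G,H_2)$ and the $H_2$-ergodic space $X_{H_2}$ — note $X_{H_2}$ is $H_2$-ergodic by construction, or by the ergodicity clause in the remark after Theorem~\ref{thm:Gif-unq}), so the $\G$-action on $(X_{H_2})_G$ is by complete isometries. Restricting to the subspace identified with $X_G$, the $\G$-action remains completely isometric, which is exactly $\G$-reasonableness of the abstract $G$-ification of $X$ we have produced. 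Then Theorem~\ref{thm:Gif-unq} (or Theorem~\ref{thm:Gif-unqabs}), applied to the pair $(G,H_1)$ and the $H_1$-ergodic faithful central $H_1$-modular space $X$, gives that this $G$-ification is completely isometrically the canonical $X_G$, matching coset representatives appropriately.

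I would write this up concisely, essentially as in the excerpt: the bimodule decomposition $\sum_r X_{H_2}\, b_r = \sum_{r,l} X\, k_l b_r$ is the heart, and uniqueness does the rest. The step I expect to need the most care is confirming that $X_{H_2}$ inherits the hypotheses needed to invoke uniqueness at the intermediate stage — namely that it is a faithful central $H_2$-modular operator space that is $H_2$-ergodic — and that the $\G$-reasonable structures at the two stages are compatible, i.e.\ that the restriction to $X_{Z(G)}$-type subspaces of the canonical $G$-ification of $X_{H_2}$ really does coincide with the canonical $G$-ification of $X$ rather than merely some other reasonable one; but admissibility of $(G,H_1)$ together with Theorem~\ref{thm:Gif-unq} rules out any such ambiguity. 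Faithfulness and centrality of $X_{H_2}$ follow from the discussion after Proposition~\ref{isinv} and Theorem~\ref{ces}, and $H_2$-ergodicity of $X_{H_2}$ holds since the canonical $H_2$-ification of an $H_1$-ergodic space can be chosen with the Hilbert space representation on which $H_2$ acts ergodically, as recorded in the remark following Theorem~\ref{thm:Gif-unq}.
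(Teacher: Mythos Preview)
Your proposal is correct and follows essentially the same approach as the paper: both argue that $(X_{H_2})_G$ decomposes as $\sum_r X_{H_2}\, b_r = \sum_{r,l} X\, k_l b_r$ via the coset representatives $\{k_l b_r\}$ for $G/H_1$, hence is an (abstract) $G$-ification of $X$, and then invoke the uniqueness theorem. You supply more detail than the paper does --- in particular you spell out the verification of $H_2$-ergodicity of $X_{H_2}$ and of $\G$-reasonableness at each stage --- which the paper leaves implicit in the one-line ``so by uniqueness''.
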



\section{The extended module action}\label{sec:mod}
We now turn our attention to the algebras $\cG^\ell$ and $\cG^r$, and the norm structure they claim via their module actions on the $G$-ification.
We assume below that $G$ is finite and $H\le Z(G)$, and that $X$ is a faithful  central $H$-modular operator space.

We may by the discussion in the Remark after Theorem \ref{ces}  
regard $H$ as a subgroup of the unitaries in the centralizer $Z(X)$ of $X$, and let $D$ be the (commutative) $C^*$-subalgebra generated by $H$ in $Z(X)$, which 
equals span$(H)$ there. 
We call the $C^*$-algebra $D_G$ the {\em acting $C^*$-algebra} of the $G$-ification. 
  Note that $D_G$ is central $H$-modular, indeed $H \cong\fE_D(H) = H 1_G$ may  be viewed as a central subgroup of the unitaries in $D_G$. 
  Here $1_G$ is the identity of $D_G$. 
  This map on $H$ may be extended to a surjective $*$-homomorphism $\alpha :  C^*(G) \to D_G$.  Indeed the map $g \mapsto g 1_G$ is a 
group homomorphism into the unitaries in $D_G$.  (We write $g 1_G$ for $\pi_D(g) \, 1_G$.) So there is a $*$-homomorphism $\alpha : C^*(G) \to D_G$.    The image of $G$ 
under this homomorphism generates $D_G$.  Indeed  $D_G$ is spanned by 
terms $g \fE_D(d)$ for $g \in G, d \in D$, and $D$ is spanned by $H \subset G$, and $\alpha(g h) = gh 1_G = g \fE_D(h)$, for $h \in H$. 
Thus  $\alpha$  is surjective. 

\begin{example} \label{61}
 If $H = \{ 1, -1 \}$ acting ergodically on $X$,
 then as a subgroup of the unitaries in $Z(X)$ it is easy to see that $H = \{ \pm I \}$.
Thus $D = \bF 1$, where $\bF$ is $\bR$ or $\bC$, since this is the $C^*$-algebra generated by $I$ and $-I$.  So  the acting
 $C^*$-algebra  $D_G$ is $\bF_G$.   From the matrix form in Section \ref{marp} it is easy to see in this case that $X_G \cong X \otimes_{\rm min} D_G$, the spatial (minimal) tensor product.
 
If $G = \bZ_4$ and $\bF = \bR$ then $D_G = \bR_G \cong \bC$,  so that the $D_G$-module $X_G$ is 
an operator  $\bC$-module; that is, 
a complex operator space. Similarly if $G = Q_8$ and  $D_G = \bR_G \cong \bH$,  then the $D_G$-module $X_G$ is 
an operator  $\bH$-bimodule,
and is a `quaternionic operator space' and the quaternification of $X$.   Note that $D_G \cong G(D) \subset M_4(\bR)$ is precisely the usual representation of the 
quaternions in  $M_4(\bR)$ (see \ref{ExQ}). \end{example}

\begin{thm}  Suppose that $G$ is finite and $H\le Z(G)$, and that $X$ is a faithful central $H$-modular operator space.  The subalgebra $\cG^\ell$ of ${\rm CB}(X_G)$ is completely isometrically isomorphic to $D_G$.   Indeed $\cG^\ell$ with its natural involution
(and inherited norm from
 ${\rm B}(X_G)$) 
is a $C^*$-algebra $*$-isomorphic to $D_G$.  Similarly $\cG^r$ with its natural involution
(and inherited norm and `reversed product' from 
${\rm B}(X_G)$) is a $C^*$-algebra $*$-isomorphic to $D_G$, so that  $\cG^r \cong \cG^\ell$ $*$-isomorphically 
and completely isometrically with respect to the inherited operator space 
structure from $CB(X_G)$. 
\end{thm}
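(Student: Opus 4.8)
The plan is to realize all three algebras---$\cG^\ell$, $\cG^r$, and $D_G$---concretely as operators on the same Hilbert space and then identify them by inspection. First I would fix a faithful central $H$-modular representation $(\theta,\rho,\cH)$ of $X$ (ergodic if we wish, but faithfulness of the $H$-action on $X$ already forces $\theta$ faithful, by Definition~\ref{modu}). Let $\cC=\theta(H)'$ and recall from the lines after Proposition~\ref{isinv} and from Lemma~\ref{lem:fIfE=fT} that $D\subset Z(X)\subset\cC$, so that $D_G$ is a $C^*$-subalgebra of $\cC_G$, and $\fI:\cC_G\to\cB(\cH_G)$ is an injective $*$-homomorphism which is $G$-equivariant (Proposition~\ref{prop:conv-rep}). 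Composing, $\fI$ restricts to a completely isometric $*$-isomorphism of $D_G$ onto the $C^*$-subalgebra $\fI(D_G)\subset\cB(\cH_G)$.

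Next I would pin down $\cG^\ell$ inside $\cB(\cH_G)$. By $G$-equivariance of $\fI$ (and Lemma~\ref{lem:fIfE=fT} which gives $\fI(\fE_D(d))=d_G$), for $d\in D$ and $g\in G$ we have $\fI(\pi_D(g)\,\fE_D(d)) = \pi_\cH(g)\,d_G$, where $d_G$ is the $G$-ification of $d\in D\subset\cC$ acting on $\cH_G$ by postcomposition, $d_G(\xi)=d\circ\xi$. Now $D=\mathrm{span}(H)$ and $H$ acts on $\cH$ via $\theta$, so $d_G$ for $d\in H$ coincides with the operator $\pi_\cH^r(\,\cdot\,)$-type right translation only through the identification of $\theta(h)$ with an element of $\cC$; more usefully, the operators $\{\pi_\cH(g): g\in G\}$ acting on $\cH_G$ are exactly the operators implementing the $G$-action on $X_G$ under the completely isometric embedding $\Pi=\fI\circ\fE_\cC$ built in the proof of Theorem~\ref{thm:Gif-unq}. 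Under that identification $X_G\cong\Pi(X)_G\subset\cB(\cH_G)$, the action $\alpha\cdot\vf$ of $\alpha\in\cG^\ell$ on $\vf\in X_G$ is precisely left multiplication by the corresponding element of $\fI(D_G)$: the generator $\pi_X(g)$ of $\cG^\ell$ acts as left multiplication by $\pi_\cH(g)=\fI(\fE_D(\,\cdot\,))$-image of $g1_G$. Since $\fI(D_G)$ is a concrete $C^*$-algebra acting \emph{nondegenerately and by completely isometric left multiplication} on the operator space $X_G$ (because $X\subset\cC$ is a $D$-submodule and left multiplication by a unitary is a complete isometry), the norm and matrix norms that $\cG^\ell$ inherits from $\cB(X_G)$ agree with those of $\fI(D_G)\cong D_G$; the map $\pi_X(g)\mapsto g1_G$ extends to the desired $*$-isomorphism, and it is automatically a complete isometry since $C^*$-algebra $*$-isomorphisms are completely isometric.

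For $\cG^r$ the argument is the mirror image, using right translations $\pi_X^r$ and the commuting right $D_G$-action; here one must remember to equip $\cG^r$ with the \emph{reversed} product, so that $\mathsf{J}:\cG^\ell\to\cG^r$, $\pi_X(g)\mapsto\pi_X^r(g^{-1})$, becomes a genuine $*$-isomorphism (this is the promised verification that $\mathsf J$ is well defined and a $*$-iso). Composing the two identifications $\cG^\ell\cong D_G\cong\cG^r$ gives $\cG^r\cong\cG^\ell$ $*$-isomorphically, and since both inherit their operator space structure as subspaces of $CB(X_G)$ and the intermediate object $D_G$ is a $C^*$-algebra, the isomorphism is completely isometric. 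The main obstacle I anticipate is the bookkeeping in the middle step: carefully checking that the left $\cG^\ell$-module action on the \emph{concrete} model $\Pi(X)_G$ really is implemented by left multiplication by $\fI(D_G)$ (equivalently, that $\fE_D(h)=h1_G$ transports correctly through $\fI$ and the polarization identity), and that this action is nondegenerate so that the inherited $C^*$-norm on $\cG^\ell$ is unambiguous; once that compatibility is nailed down, everything else is either a standard fact about $C^*$-algebras ($*$-isomorphisms are completely isometric) or a routine transcription from the left to the right side.
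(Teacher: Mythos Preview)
Your approach is structurally the same as the paper's, differing mainly in the choice of concrete model: you work with the Hilbert-space realization $X_G \hookrightarrow \cB(\cH_G)$ via $\fI$, while the paper uses the matrix realization $G(X)\subset M_m(X)$ and $G(D)\subset M_m(D)$ from Section~\ref{marp}. In both cases the strategy is to exhibit $X_G$ as a left operator $D_G$-module, identify the range of the resulting map $D_G\to CB(X_G)$ with $\cG^\ell$, and then verify injectivity to conclude complete isometry. The paper's matrix model makes the module action and the injectivity check slightly more transparent (everything happens inside $M_m(X)$), but your Hilbert-space route is equally valid.

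The one imprecise step is your passage from ``$\fI(D_G)$ acts nondegenerately and by completely isometric left multiplication'' to ``the norms agree.'' Nondegeneracy by itself does not force isometry, and only the \emph{unitary} elements of $D_G$ act by complete isometries on $X_G$; a general $a\in D_G$ does not. What is actually needed is the left-multiplier machinery that the paper invokes explicitly: since $X_G$ is a left operator $D_G$-module (automatic from either concrete realization), there is a $*$-homomorphism $\lambda:D_G\to\cA_l(X_G)$, and $\cA_l(X_G)$ sits isometrically in $B(X_G)$ and completely isometrically in $CB(X_G)$ (see \cite[4.6.6]{BLM}, and \cite{BReal} in the real case). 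The paper then checks that $\lambda$ is \emph{injective} via the direct-sum decomposition: if $\lambda\bigl(\sum_k V_k(d_k\otimes I_m)\bigr)=0$, evaluating on $X\otimes I_m\subset G(X)$ forces $d_k X=0$ for each $k$, hence $d_k=0$ since $D\subset Z(X)$ acts faithfully on $X$. Injectivity of a $*$-homomorphism between $C^*$-algebras then gives complete isometry, and the range is $\cG^\ell$ because $\lambda(\alpha(g))=\pi_X(g)$. Your ``main obstacle'' paragraph correctly locates the delicate point; the fix is to replace ``nondegenerate'' with ``injective as a $*$-homomorphism into $\cA_l(X_G)$'' and supply this short argument. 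The right-hand side via $\cA_r(X_G)$ with reversed product, and the identification of $\mathsf J$ as $\rho\circ\lambda^{-1}$, then go exactly as you outline.
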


\begin{proof}  We may  view $X_G$ as the matrix space $G(X) \subset M_m(X)$, 
and $D_G$ as the matrix space $G(D)$, as in  Section \ref{marp}.  
   Since $X$ is a left operator module over $Z(X)$ and its subalgebra $D$, we have that $M_m(X)$ is a 
 left operator module over $M_m(D)$ and over its subalgebra $G(D)$.    Now  $G(D) \, G(X) \subset G(X)$, 
as may be seen for example 
 by observing that $D_G$ acts on $X_G$ on the left by convolution by a slight variant of the calculation in Lemma \ref{lem:X_G-alg}. 
 Note that this action of $g 1_G \in D_G$ on $X_G$ is exactly $\pi_X(g)$ for $g \in G$. 
In particular $g_k 1_G$ becomes $V_k \in M_m(H)$, and $V_k G(X) \subset G(X)$.   Thus 
 $G(X)$ is a $G(D)$-submodule of $M_m(X)$.  Hence $G(X)$ is a  left operator $G(D)$-module.   
 
 By the theory of operator modules (see e.g.\ 4.6.6 in \cite{BLM}, with modifications from  e.g.\ \cite[Section 4]{BReal} in the real case)  
 there is a  $*$-homomorphism $\lambda : G(D) \to \cA_l(G(X))$ implementing the left module action, and the latter $C^*$-algebra is a subalgebra of $B(G(X))$ 
isometrically (resp.\ $CB(G(X))$ completely isometrically).  If $\lambda(\sum_k \, V_k \, (d_k \otimes I_m) ) = 0$ for $d_k \in D$ 
 then $\sum_k \, V_k \, (d_k \otimes I_m)  (X \otimes I_m) = 0$.  But this implies that $(d_k \otimes I_m)  (X \otimes I_m) = 0$ for all $k$, 
 so that $d_k = 0$.  So $\lambda$ is faithful, hence is completely isometric.   
 We claim that its range is $\cG^\ell$, the  span in $CB(X_G)$ of 
 $\pi_X(G)$.  Note  $\lambda(\alpha(g)) = \lambda(g 1_G)$, which as we said corresponds precisely to the action $\pi_X(g)$ on $X_G$. 
 Hence $\lambda(D_G) = \lambda(\alpha(C^*(G)))$ is  the linear span in $CB(X_G)$ of $\pi_X(G)$.   

 A similar proof works for the map $\rho : D_G \to \cA_r(X_G)$ implementing the right action of $D_G$ on $X_G$, and for $\cG^r$.  
This $\rho$ is a faithful (completely isometric) $*$-homomorphism into the $C^*$-algebra $\cA_r(X_G)$.   Now $\cA_r(X_G) \subset CB(X_G)$ 
completely isometrically, however
 $\rho$ is an antihomomorphism  as a map into $CB(X)$ with its usual product (as is usual for right actions).    
 It is easy to check that  $\rho(d 1_G)  = \pi_X^r(g^{-1})$ for $g \in G$ (similarly to how we checked the matching assertion for $\lambda(d 1_G)$ above).  
 From this one sees first, similarly to the last paragraph, that  the range of $\rho$ is
  $\cG^r$, and $\rho$ may be viewed as a completely isometric ($*$-) homomorphism onto $\cG^r$ (with the `reversed product' from
 ${\rm B}(X_G)$). 
  Second, $\lambda$ composed with the inverse of the latter homomorphism  is the map $\mathsf{J} : \cG^\ell \to \cG^r$ defined early in Section \ref{Linstr},
  that is it is the linear extension of the map $\pi_X(g) \to \pi_X^r(g^{-1})$.  This is a $*$-isomorphism $\cG^r \cong \cG^\ell$. 
    \end{proof}

\begin{cor} Suppose that $G$ is finite and $H\le Z(G)$, and that $X$ is a  faithful central $H$-modular operator space.  
The $G$-ification $X_G$  is an operator bimodule over the acting $C^*$-algebra $D_G$, or equivalently is an operator $\cG^\ell$-$\cG^r$-bimodule. 
\end{cor}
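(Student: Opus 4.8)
The plan is to deduce the corollary directly from the theorem that immediately precedes it. That theorem provides a completely isometric $*$-isomorphism $\lambda : D_G \to \cG^\ell$ implementing the left action of $D_G$ on $X_G$ by convolution, and a completely isometric $*$-antihomomorphism (into $CB(X_G)$) $\rho : D_G \to \cG^r$ implementing the right action, with $\cG^r$ carrying its reversed product. So the essential content of the corollary is already in hand once we observe that the module actions in question are honest operator bimodule actions.

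First I would recall that, by the lines after Proposition~\ref{isinv} and the matrix picture of Section~\ref{marp}, we may represent $X$ completely isometrically as an $H$-submodule of some $\cC = \theta(H)'$, and then $X_G \cong G(X) \subset M_m(X)$ while $D_G \cong G(D) \subset M_m(D)$, with $m = |G/H|$. Since $X$ is a left operator module over $Z(X)$ and hence over its $C^*$-subalgebra $D$, the space $M_m(X)$ is a left operator module over $M_m(D)$; restricting the module action to the $C^*$-subalgebra $G(D)$ and using $G(D)\,G(X)\subseteq G(X)$ (as established in the proof of the preceding theorem via the convolution computation, or directly from $V_k\,G(X)\subseteq G(X)$) shows $G(X)$ is a left operator $G(D)$-module. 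The same reasoning on the right (using that $X$ is also a right $Z(X)$-module, since $H\le Z(X)$ acts on both sides) shows $G(X)$ is a right operator $G(D)$-module. By \cite[Theorem 4.6.7]{BLM} (with the real modifications from \cite[Section~4]{BReal}), being simultaneously a left and right operator module over the $C^*$-algebra $D_G$ means $X_G$ is an operator $D_G$-bimodule, i.e.\ an operator $C^*(G)$-bimodule via the surjection $\alpha : C^*(G)\to D_G$. This is exactly the assertion that $X_G$ is an operator bimodule over the acting $C^*$-algebra $D_G$.

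For the "equivalently" clause, I would simply invoke the identifications from the preceding theorem: $\lambda$ is a completely isometric $*$-isomorphism $D_G \cong \cG^\ell$ and $\rho$ is a completely isometric isomorphism $D_G \cong \cG^r$ (as a $*$-isomorphism onto $\cG^r$ with its reversed product, a completely isometric antihomomorphism into $CB(X_G)$). Transporting the operator $D_G$-bimodule structure across these isomorphisms makes $X_G$ an operator $\cG^\ell$-$\cG^r$-bimodule, and conversely; so the two formulations coincide.

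I do not expect any serious obstacle here — the corollary is essentially a repackaging of the theorem plus the standard two-sided criterion for operator bimodules from \cite{BLM}. The only point requiring a little care is making sure the left and right module actions commute (so that we genuinely have a bimodule and not just a pair of one-sided modules), but this is immediate: on $X_G$ the left action of $D_G$ is generated by $\pi_X(g)$ and the right action by $\pi_X^r(g^{-1})$, and $\pi_X$ and $\pi_X^r$ commute by construction (Section~\ref{Linstr}), since $H\le Z(G)$; equivalently, in the matrix picture the left and right multiplications by $M_m(D)$ on $M_m(X)$ commute. Hence the cited bimodule theorem applies and the corollary follows.
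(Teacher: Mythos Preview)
Your proposal is correct and is essentially the approach the paper has in mind: the paper gives no separate proof of this corollary, treating it as immediate from the preceding theorem, whose proof already exhibits faithful $*$-homomorphisms $\lambda : D_G \to \cA_\ell(X_G)$ and $\rho : D_G \to \cA_r(X_G)$ implementing the two module actions. Your only addition is to make explicit the one remaining step the paper leaves tacit, namely that the left and right actions commute (clear from Section~\ref{Linstr} or the matrix picture) so that \cite[Theorem 4.6.7]{BLM} upgrades the two one-sided operator module structures to an operator bimodule structure; this is exactly the right thing to say.
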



\begin{remark}
\begin{enumerate}
\item
Indeed the abstract $G$-ification of $X$ may be alternatively defined 
as a  pair $(V, \Phi)$ where $V$ is an operator $D_G$-bimodule with the restriction of the bimodule action to $H$ central (or equivalently,
$d v = vd$ for $v \in V, d \in D$), 
and $\Phi : X \to V$ is a complete isometry and a $D$-bimodule map
(or if you prefer, an $H$-intertwiner, since $D = {\rm span}(H)$) into $\{ v \in V : a v = v a \; \textrm{for all} \, a \in D_G \}$ such that 
$V = \bigoplus_{gH \,\in\, G/H} \, \Phi(X) \cdot \alpha(g)$.
Indeed by Theorem \ref{ces} such $V$ is a $G$-modular operator space via the action $\alpha$ above, e.g.\ $g v = \alpha(g) \cdot v$ where the latter product is 
the  left $D_G$-module action.  Also 
$a v = v a$ for all $a \in D_G$ if and only if $\alpha(g) \cdot v = v \cdot \alpha(g)$ 
(that is, $gv = vg$) for all $g \in G$. 

Conversely, any abstract $G$-ification in the earlier sense is  $X_G$ by our uniqueness theorem (assuming ergodicity and admissibility).  As we said, 
$X_G$  is an operator $D_G$-bimodule and the canonical map $\Phi = \fE_X : X \to X_G$ satisfies the conditions in the last paragraph. 
In particular $a \fE_X(x)  =\fE_X(x) a$ for all $a \in D_G$,  since this is true for $a \in \alpha(G)$ and $\alpha(G)$ spans $D_G$.

\smallskip

\item
 We show   (in contrast to Example \ref{61}) that for some $H$ we have that  $D_G$ (and hence the algebras  $\cG^\ell$ and $\cG^r$) depend on
  $X$ and $Z(X)$. 
We claim that for $H = \bZ_5$, which
is   part of the admissible pair with $G = \bZ_{25}$, 
the $C^*$-algebra $D_G$ depends on $X$,  
and indeed can have different dimensions 
for different $X$.   To see this note that from the direct sum decomposition  
of $D_G$ we have dim$(D_G) = m \, {\rm dim}(D)$, where $m = |G/H|$.   So it is enough to show that 
dim$(D)$ depends on $X$. 

 Note that $Z_5$ acts ergodically on $X= l^\infty_4(\bC)$ via its representation 
  $$\{ (1,1, 1,1), (z, z^2, z^3, z^4), (z^2,z^4, z, z^3) , \cdots \}$$ (there are five n-tuples here) for a primitive 5th root of unity $z$.   
  This  representation of  $\bZ_5$ generates  a 4 dimensional C*-algebra, since the first four of these 
  vectors constitute an (invertible)  Vandermonde matrix.  Thus
  $D_G$ has a different dimension to what it has for the canonical  representation of $Z_5$  inside 
$X = \bC$ as the 5th roots of unity. 
\end{enumerate}
\end{remark}

\subsection*{Concrete representations of $\cG^\ell$ and $\cG^r$} 
Next, we give a characterization of those $G$-modular representations $(\pi, \rho, \cK)$ of the operator space $G$-ification $X_G$ of a central $H$-modular operator space $X$, for which $C^*_\pi(G)$ is canonically $*$-isomorphic to $\cG^\ell$.

A left-$G$-modular representation $(\pi, \rho, \cK)$ of the operator space $G$-ification $X_G$ is said to be \emph{$H$-monic} if for any $a\in {\rm span}\{\pi(h): h\in H\}$, $a\,\rho(\fE_X(X)) = 0$ implies $a=0$.

\begin{thm}\label{thm:conc-mod}
Let $(\pi, \rho, \cK)$ be a left-$G$-modular representation of the operator space $G$-ification $X_G$. The map $\pi(g)\mapsto \pi_X(g)$ extends to a $*$-isomorphism $C^*_\pi(G)\to \cG^\ell$ if and only if $(\pi, \rho, \cK)$ is $H$-monic.
\begin{proof}
The forward implication follows immediately from the equivariance properties of modular representations.

Conversely, assume $(\pi, \rho, \cK)$ is $H$-monic. If $\sum_{g\in G} c_g \pi(g) = 0$ for some scalars $c_g$, then $\rho\big(\sum_g c_g \pi_X(g)\vf\big) = \sum_g c_g \pi(g)(\rho(\vf)) = 0$ for every $\vf\in X_G$, hence $\sum_g c_g \pi_X(g) = 0$.
This shows that the map $\pi(g)\mapsto \pi_X(g)$ extends to a well-defined linear map $C^*_\pi(G)\to \cG^\ell$, which is a surjective $*$-homomorphism. 

To show that this map is injective, let $\sum_g c_g \pi_X(g) = 0$ for some scalars $c_g$. If $\{ g_i : i = 1, \cdots , m \}$ is a full set of mutually inequivalent representatives of $G/H$, then $$\sum_{i=1}^m \pi(g_i) \big(\sum_{h\in H}c_{g_ih} \pi(h)(\rho(\fE_X(x)))\big)= \rho\big(\sum_g c_g \pi_X(g)(\fE_X(x)) \big)
= 0$$ for every $x\in X$. 
Thus, by Lemma~\ref{lem:pol-id}, $\sum_{h\in H}c_{g_ih} \pi(h)(\rho(\fE_X(x)))= 0$, and it follows $\sum_{h\in H}c_{g_ih} \pi(h)= 0$ for every $i=1, 2, \dots, m$, by $H$-monicity.
Hence, $\sum_{g\in G} c_g \pi(g) = \sum_{i=1}^m \pi(g_i) \big(\sum_{h\in H}c_{g_ih} \pi(h)\big)=0$. This completes the proof.
\end{proof}
\end{thm}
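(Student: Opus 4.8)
The plan is to reduce everything to statements about the group algebra $\bF[G] = C^*(G)$ (since $G$ is finite, all $C^*$-algebras in sight are finite-dimensional, so no density or continuity issues arise), using two facts repeatedly: that $\rho$ is injective, being a completely isometric representation of $X_G$, and that it intertwines the two $G$-actions, $\rho(\pi_X(g)\vf) = \pi(g)\rho(\vf)$; and that $X_G = \oplus_{k=1}^m\,\pi_X(g_k)\fE_X(X)$ is an \emph{internal direct sum}, where $g_1,\dots,g_m$ are coset representatives of $G/H$ (Lemma~\ref{lem:pol-id}).

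For the forward implication, I would assume $\pi(g)\mapsto\pi_X(g)$ extends to a $*$-isomorphism $C^*_\pi(G)\to\cG^\ell$, and let $a = \sum_{h\in H}c_h\pi(h)$ satisfy $a\,\rho(\fE_X(X)) = 0$. Write $\tilde a = \sum_{h\in H}c_h\pi_X(h)\in\cG^\ell$ for its image. For every $\vf\in X_G$ we have $\rho(\tilde a\,\vf) = \sum_h c_h\pi(h)\rho(\vf) = a\,\rho(\vf)$; taking $\vf = \fE_X(x)$ and using injectivity of $\rho$ gives $\tilde a\,\fE_X(x) = 0$ for all $x\in X$. Since $H\le Z(G)$, the operator $\tilde a$ commutes with every $\pi_X(g)$, so $\tilde a\,\pi_X(g)\fE_X(x) = \pi_X(g)\,\tilde a\,\fE_X(x) = 0$; as these elements span $X_G$ by Lemma~\ref{lem:pol-id}, we conclude $\tilde a = 0$ in $\cG^\ell$, hence $a = 0$ by injectivity of the $*$-isomorphism. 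Thus $(\pi,\rho,\cK)$ is $H$-monic.

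For the converse, assume $H$-monicity. First the assignment $\pi(g)\mapsto\pi_X(g)$ is well defined: if $\sum_g c_g\pi(g) = 0$ then $\rho(\sum_g c_g\pi_X(g)\vf) = \sum_g c_g\pi(g)\rho(\vf) = 0$ for every $\vf\in X_G$, whence $\sum_g c_g\pi_X(g) = 0$ since $\rho$ is injective. This gives a surjective $*$-homomorphism $C^*_\pi(G)\to\cG^\ell$ onto the $C^*$-algebra $\cG^\ell$ (the $*$-property being immediate from $\pi(g)^* = \pi(g^{-1})$ and the corresponding identity for $\pi_X$), and only injectivity remains. Suppose $\sum_g c_g\pi_X(g) = 0$. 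Group the sum over cosets of $H$: using $H\le Z(G)$ write $\sum_g c_g\pi(g) = \sum_{i=1}^m \pi(g_i)\,a_i$ with $a_i := \sum_{h\in H}c_{g_ih}\pi(h)\in{\rm span}\{\pi(h): h\in H\}$. Applying the vanishing operator to $\fE_X(x)$ and pushing through $\rho$, one gets $0 = \rho\big(\sum_g c_g\pi_X(g)\fE_X(x)\big) = \sum_{i=1}^m \pi(g_i)\,a_i\,\rho(\fE_X(x))$. Each $a_i\,\rho(\fE_X(x))$ lies in $\rho(\fE_X(X))$, since $a_i\,\rho(\fE_X(x)) = \rho\big(\sum_h c_{g_ih}\pi_X(h)\fE_X(x)\big)$ and $\sum_h c_{g_ih}\pi_X(h)$ maps $\fE_X(X)$ into itself; and by Lemma~\ref{lem:pol-id} together with injectivity of $\rho$, the subspaces $\pi(g_i)\rho(\fE_X(X))$ are in direct sum inside $\rho(X_G)$. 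Hence each term $\pi(g_i)a_i\,\rho(\fE_X(x))$ vanishes, and since $\pi(g_i)$ is unitary, $a_i\,\rho(\fE_X(x)) = 0$ for every $x$, i.e.\ $a_i\,\rho(\fE_X(X)) = 0$. By $H$-monicity $a_i = 0$ for each $i$, so $\sum_g c_g\pi(g) = \sum_i \pi(g_i)a_i = 0$, which is the desired injectivity.

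The one genuinely non-formal step is this coset-decoupling: the entire role of the hypothesis is that $H$-monicity converts the vanishing of the ``$H$-block'' $a_i$ on $\rho(\fE_X(X))$ into the vanishing of $a_i$ as an element of $C^*_\pi(G)$, and this is only available \emph{after} the polarization/direct-sum decomposition of Lemma~\ref{lem:pol-id} has separated the $m$ cosets. Everything else is routine manipulation of the intertwining identity $\rho\circ\pi_X(g) = \pi(g)\circ\rho$ and the injectivity of $\rho$; no norm estimates are needed, as $G$ is finite and all the algebras are finite-dimensional.
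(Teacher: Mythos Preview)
Your proof is correct and follows essentially the same approach as the paper's: both use the intertwining relation $\rho\circ\pi_X(g)=\pi(g)\circ\rho$ together with injectivity of $\rho$ for well-definedness, and then the coset decomposition of Lemma~\ref{lem:pol-id} plus $H$-monicity for injectivity. Your treatment is in fact more careful on two points the paper leaves implicit: you spell out the forward direction (which the paper dismisses as ``immediate from equivariance''), and you explain precisely why the summands $\pi(g_i)a_i\rho(\fE_X(x))$ can be decoupled, namely because $a_i\rho(\fE_X(x))\in\rho(\fE_X(X))$ and the subspaces $\pi(g_i)\rho(\fE_X(X))=\rho(\pi_X(g_i)\fE_X(X))$ are in direct sum by Lemma~\ref{lem:pol-id} and injectivity of $\rho$.
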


The following lemma guarantees existence of $H$-monic left-$G$-modular representations.

\begin{lem}\label{lem:nondeg-ex}
Let $(\pi, \rho, \cK)$ be a left-$G$-modular representation of the operator space $G$-ification $X_G$. Let $p\in C^*_\pi(H)$ be the maximal projection such that $\rho(\fE_X(X))p = 0$. Then $(1-p)\cK$ reduces both $\pi$ and $\rho$, and the corresponding subrepresentation $(\pi^\prime, \rho^\prime, (1-p)\cK)$ is a $H$-monic left-$G$-modular representation of the operator space $G$-ification $X_G$. 
\begin{proof}
That $(1-p)\cK$ reduces both $\pi$ and $\rho$ follows from the facts that $\pi(H)$ commutes with both $\pi(G)$ and $\rho(X_G)$. Moreover, it follows that $\rho(X_G) p= 0$, and therefore $\rho^\prime$ is a completely isometric representation of $X_G$. 

 If $q$ is a projection in $C^*_{\pi^\prime}(H) = (1-p) C^*_\pi(H)$ such that $q \rho^\prime(\fE_X(X))=0$, then $q (1-p) \leq p$ by maximality of $p$, which forces 
 $q= (1-p)  q = 0$. This implies that $(\pi^\prime|_H, \rho^\prime\circ\fE_X, (1-p)\cK)$ is $H$-monic.
 Indeed  the set $\{a\in C^*_{\pi^\prime}(H) : a\rho^\prime(\fE_X(X))=0\}$ is 
easily argued
to be a $C^*$-algebra, so generated by its projections since it is finite-dimensional.
\end{proof}
\end{lem}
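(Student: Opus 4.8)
The plan is to produce, from an arbitrary left-$G$-modular representation $(\pi, \rho, \cK)$, a subrepresentation on which nothing in $\operatorname{span}\{\pi(h) : h \in H\}$ kills $\rho(\fE_X(X))$, and to do so via a single maximal projection sitting in $C^*_\pi(H)$. First I would record the two commutation facts that make everything go: $\pi(H)$ commutes with $\pi(G)$ (since $H \le Z(G)$) and $\pi(H)$ commutes with $\rho(X_G)$ (since $(\pi,\rho,\cK)$ is a $G$-modular representation and $H$ is central, so the left $H$-action on $X_G$ coincides with the right one and is implemented on both sides by $\pi(H)$). Granting these, the set $N := \{a \in C^*_\pi(H) : a\,\rho(\fE_X(X)) = 0\}$ is a left ideal of $C^*_\pi(H)$; because $H$ is abelian, $C^*_\pi(H)$ is commutative, so $N$ is a (closed, two-sided, self-adjoint) ideal, hence a $C^*$-subalgebra, and since it lives in the finite-dimensional algebra $C^*_\pi(H)$ it is generated by its projections and contains a largest one — this is the projection $p$ in the statement. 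I would spell out this last point (finite-dimensionality of $C^*_\pi(H)$ from finiteness of $H$, existence of a maximal projection in a finite-dimensional commutative $C^*$-algebra) as it is exactly the ``indeed'' sentence at the end of the proof.

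Next I would verify that $(1-p)\cK$ reduces both $\pi$ and $\rho$. Since $p \in C^*_\pi(H)$ and $\pi(H)$ commutes with $\pi(G)$, the projection $p$ commutes with every $\pi(g)$, so $(1-p)\cK$ is $\pi$-invariant; since $\pi(H)$ commutes with $\rho(X_G)$, $p$ commutes with every $\rho(\vf)$, so $(1-p)\cK$ is $\rho$-invariant. Thus we get a subrepresentation $(\pi', \rho', (1-p)\cK)$ obtained by compression, and $\pi'$ is still a unitary representation of $G$. I would then check it remains a left-$G$-modular representation: the defining intertwining relations $\pi'(g)\rho'(\vf) = \rho'(g\cdot\vf)$ pass to the compression because $p$ is central for both families. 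To see $\rho'$ is still completely isometric I would argue that $\rho(X_G)p = 0$: indeed $p\,\rho(\fE_X(X)) = 0$ by definition of $N$, and since $p$ commutes with $\rho(X_G)$ and $X_G$ is spanned (Lemma~\ref{lem:pol-id}) by $\pi_X(g)\fE_X(X)$, whose images under $\rho$ are $\pi(g)\rho(\fE_X(X))$, we get $p\,\rho(\pi_X(g)\fE_X(x)) = \pi(g) p\, \rho(\fE_X(x)) = 0$; hence $\rho(X_G)p = p\,\rho(X_G) = 0$, so $\rho$ and $\rho'$ have the same matrix norms and $\rho'$ is completely isometric.

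Finally I would establish $H$-monicity of $(\pi', \rho', (1-p)\cK)$. Suppose $a \in \operatorname{span}\{\pi'(h) : h \in H\} = (1-p)\operatorname{span}\{\pi(h):h\in H\}$ with $a\,\rho'(\fE_X(X)) = 0$. Replacing $a$ by the projection onto its (finite-dimensional) support, or more simply working inside the commutative $C^*$-algebra $C^*_{\pi'}(H) = (1-p)C^*_\pi(H)$ and passing to a projection $q$ in the ideal $\{b \in C^*_{\pi'}(H) : b\,\rho'(\fE_X(X)) = 0\}$, one sees that the corresponding projection $q' \in C^*_\pi(H)$ (namely $q$ viewed in $C^*_\pi(H)$, satisfying $q' \le 1-p$) kills $\rho(\fE_X(X))$: indeed $q'\rho(\fE_X(x)) = q'(1-p)\rho(\fE_X(x)) + q' p \rho(\fE_X(x)) = q\rho'(\fE_X(x)) + 0 = 0$. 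So $q' \in N$, hence $q' \le p$; combined with $q' \le 1-p$ this forces $q' = 0$, hence $q = 0$, hence the ideal in $C^*_{\pi'}(H)$ is $\{0\}$ and $a = 0$. This is the content of the lemma. The only genuinely delicate point — and the one I would be most careful about — is the reduction from an arbitrary element $a$ of the span to a projection in the annihilator ideal; the clean way is precisely the remark that the annihilator $\{a \in C^*_{\pi'}(H) : a\rho'(\fE_X(X)) = 0\}$ is a $C^*$-subalgebra (a closed self-adjoint ideal in the commutative algebra $C^*_{\pi'}(H)$), hence is the linear span of its projections by finite-dimensionality, so it suffices to rule out nonzero projections, which is what the maximality of $p$ does.
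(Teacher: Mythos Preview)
Your proposal is correct and follows essentially the same route as the paper: both arguments hinge on the commutation of $\pi(H)$ with $\pi(G)$ and with $\rho(X_G)$ to get the reduction and $\rho(X_G)p=0$, then invoke maximality of $p$ to rule out nonzero projections in the annihilator of $\rho'(\fE_X(X))$ inside $C^*_{\pi'}(H)=(1-p)C^*_\pi(H)$, with the final observation that this annihilator is a finite-dimensional $C^*$-algebra and hence spanned by its projections. Your write-up is simply more explicit (e.g.\ you spell out the use of Lemma~\ref{lem:pol-id} to pass from $p\,\rho(\fE_X(X))=0$ to $p\,\rho(X_G)=0$, and you justify the existence of $p$ via the ideal $N$), but the skeleton is identical to the paper's.
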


\begin{cor}
For every $a=\sum_{g\in G} c_g\pi_X(g)\in\cG^\ell$ we have
\[
\|a\|_{\cG^\ell} = \inf_{(\pi, \rho, \cK)}\|\sum_{g\in G} c_g\pi(g)\|,
\]
where the infimum is taken over all left-$G$-modular representations $(\pi, \rho, \cK)$ of the operator space $G$-ification $X_G$. This infimum is in fact achieved.
\begin{proof}
Note that in the proof of Theorem~\ref{thm:conc-mod}, for existence of a (surjective) $*$-homomorphism $C^*_\pi(G)\to \cG^\ell$ we did not use $H$-monicity condition. Thus, we have the inequality $\le$ in the above. By Lemma~\ref{lem:nondeg-ex}, there is a left-$G$-modular representation $(\pi, \rho, \cK)$ of $X_G$ that yields equality.
\end{proof}
\end{cor}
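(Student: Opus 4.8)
The plan is to sandwich $\|a\|_{\cG^\ell}$ between the two inequalities, using Theorem~\ref{thm:conc-mod} for the upper estimate on the right-hand side and Lemma~\ref{lem:nondeg-ex} to realise the infimum, while relying on the fact established in Section~\ref{sec:mod} that $\cG^\ell$, with its natural involution and the norm inherited from $\mathrm{B}(X_G)$, is a $C^*$-algebra.

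First I would prove the inequality $\le$. Fix an arbitrary left-$G$-modular representation $(\pi,\rho,\cK)$ of $X_G$; at least one exists, since the canonical concrete operator space $G$-ification of $X$ provides one in which the module action of $G$ on $X_G$ is exactly $\pi_X$. Reading off the converse direction in the proof of Theorem~\ref{thm:conc-mod}, the assignment $\pi(g)\mapsto\pi_X(g)$ extends to a well-defined \emph{surjective} $*$-homomorphism $\Phi\colon C^*_\pi(G)\to\cG^\ell$: the implication $\sum_g c_g\pi(g)=0\Rightarrow\sum_g c_g\pi_X(g)=0$ used there makes no use of $H$-monicity, which is invoked only for the injectivity of $\Phi$. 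Since $*$-homomorphisms between $C^*$-algebras are contractive and $\Phi\bigl(\sum_g c_g\pi(g)\bigr)=a$, we obtain $\|a\|_{\cG^\ell}\le\bigl\|\sum_g c_g\pi(g)\bigr\|$ for every such representation, and hence $\|a\|_{\cG^\ell}\le\inf_{(\pi,\rho,\cK)}\bigl\|\sum_g c_g\pi(g)\bigr\|$.

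For the reverse inequality together with the attainment claim I would exhibit a single representation giving equality. By Lemma~\ref{lem:nondeg-ex}, compressing an arbitrary left-$G$-modular representation of $X_G$ to $(1-p)\cK$ yields an $H$-monic left-$G$-modular representation $(\pi',\rho',\cK')$ of $X_G$. For such a representation Theorem~\ref{thm:conc-mod} promotes $\Phi$ to a $*$-isomorphism $C^*_{\pi'}(G)\cong\cG^\ell$, which is automatically isometric, so $\bigl\|\sum_g c_g\pi'(g)\bigr\|=\|\Phi(\sum_g c_g\pi'(g))\|=\|a\|_{\cG^\ell}$; this forces the infimum to equal $\|a\|_{\cG^\ell}$ and shows it is achieved at $(\pi',\rho',\cK')$. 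The only step requiring genuine care is the well-definedness and surjectivity of $\Phi$ for a general (not necessarily $H$-monic) left-$G$-modular representation; but as noted this is precisely the portion of the proof of Theorem~\ref{thm:conc-mod} that is independent of $H$-monicity (the latter entering there only via Lemma~\ref{lem:pol-id} in the injectivity step), so no real obstacle remains and the corollary follows formally from standard $C^*$-algebra facts.
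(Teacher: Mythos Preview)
Your proposal is correct and follows exactly the paper's own approach: the inequality $\le$ comes from the part of the proof of Theorem~\ref{thm:conc-mod} that constructs a surjective $*$-homomorphism $C^*_\pi(G)\to\cG^\ell$ without invoking $H$-monicity, and the attainment of the infimum comes from Lemma~\ref{lem:nondeg-ex} together with the full force of Theorem~\ref{thm:conc-mod}. Your write-up simply makes explicit the $C^*$-algebraic facts (contractivity of $*$-homomorphisms, isometry of $*$-isomorphisms, and that $\cG^\ell$ is a $C^*$-algebra as shown earlier in Section~\ref{sec:mod}) that the paper leaves implicit.
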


\section{Characterization of $G$-ified spaces}\label{sec:Gified}
A very natural question now is that given an admissible $H\le Z(G)$ and a $G$-modular operator space $Y$ that is $H$-central and $H$-ergodic, is there a central ergodic $H$-modular operator space $X$ such that $Y=X_G$? 

The answer is in general no, even already in the complexification case. Indeed, a complex $C^*$-algebra $A$ is the complexification of a real $C^*$-algebra if and only if $A$ is isomorphic to its opposite $C^*$-algebra $A^{\rm op}$. A celebrated result of Connes~\cite{Con75} provides examples of von Neumann algebras that do not satisfy this.

In contrast, we prove that for certain pairs $H\le G$, every $G$-modular real operator space is the $G$-ification of an $H$-modular real operator space. These examples include quaternionic operator spaces, as well as 
$D_4$-modular real operator spaces.
This is a consequence of a characterization result for $G$-modular operator spaces $Y$ that are $G$-ification of some $H$-modular operator space $X$, which we prove in this section.

\begin{defn}
Let $H\le Z(G)$. A subgroup $\G\le \Aut_H(G)$ is said to be \emph{balanced} (for $H\le G$) if $(G, H)$ is $\G$-admissible, ${\rm Inn}(G)\subseteq \G$, and for every $\sigma \in \G$ the set $L_\sigma:=\{g^{-1}\sigma(g) \,:\, g\in G\}$ is a subgroup of $G$. 
\end{defn}

When $G$ is abelian, then the last condition above obviously holds. In particular, in this case if $H$ is $\G$-admissible for any $\G\le \Aut_H(G)$, then $\Aut_H(G)$ is balanced.
Also, $\G = {\rm Inn}(Q_8)$ is balanced for $\bZ_2\le Q_8$.

\begin{defn}\label{def:align}
Let $H\le Z(G)$, and let $\G\le \Aut_H(G)$ be balanced for $H\le G$.
A $G$-modular operator space $Y$ is said to be \emph{$(G, H, \G)$-aligned} if 
\begin{itemize}
\item[(i)\ ]
it is ergodic for the restriction of the action to $H$ and $L_\sigma$ for every non-trivial $\sigma\in\G$;
\item[(ii)\,]
there is an action of $\G$ on $Y$ by complete isometries such that $\sigma(gy) = \sigma(g) \sigma(y)$ for all $\sigma\in \G$, $g\in G$ and $y\in Y$;
\item[(iii)]
for every $g\in G$ and $y\in Y$, $\sigma_g(y)= gyg^{-1}$, where $\sigma_g$ is the inner automorphism of $G$ 
(necessarily in $\Gamma$) defined by $g$, and $\sigma_g(y)$ is its action on $Y$ from (ii).
\end{itemize}
\end{defn}

{\bf Remark.}  For the reader curious about the bias towards the left action of $G$ in condition (ii) above, we note that the conclusion of the next theorem will imply that we also have $\sigma(y g) = \sigma(y) \sigma(g)$.

\begin{thm}\label{thm:gified-charac}
Let $G$ be finite, $H\le Z(G)$, and $Y$ is a $G$-modular operator space. If $Y$ is $(G, H, \G)$-aligned for some balanced $\G\le\Aut_H(G)$, then there is a central $H$-modular operator space $X$ such that $Y=X_G$.
\end{thm}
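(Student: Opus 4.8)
The plan is to recover $X$ as a fixed-point space inside $Y$ and then to verify that $Y$, with its $G$-action, satisfies the defining properties of the abstract $G$-ification of $X$; uniqueness (Theorem~\ref{thm:Gif-unqabs}) then forces $Y = X_G$. Concretely, define
\[
X := Y^{\G} = \{ y \in Y : \sigma(y) = y \text{ for all } \sigma \in \G \},
\]
using the $\G$-action from condition (ii) of Definition~\ref{def:align}. First I would check that $X$ is a closed subspace of $Y$ which is invariant under the $H$-action: for $h \in H$ and $y \in X$, since $\sigma(h) = h$ we get $\sigma(hy) = \sigma(h)\sigma(y) = hy$, so $hy \in X$. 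Thus $X$ inherits an $H$-action by complete isometries. I would then argue $X$ is central $H$-modular: $Y$ is $G$-modular hence in particular $H$-modular, and since $H \le Z(G)$ condition (iii) with $g = h \in H$ gives $\sigma_h = \mathrm{id}$ on $Y$, so $hyh^{-1} = y$, i.e. $hy = yh$ for $y \in Y$ and a fortiori for $y \in X$; so the $H$-action on $X$ is central. Ergodicity of the $H$-action on $X$ follows from condition (i) (an $H$-fixed vector in $X \subseteq Y$ is an $H$-fixed vector in $Y$, hence $0$). Faithfulness: if $h$ acts trivially on $X$ one must push this back to $Y$; here I would use that $\fE$-type embeddings and the decomposition below force enough room, or invoke that $\G \supseteq \mathrm{Inn}(G)$ together with $H \le Z(G)$ makes $H$ act on all of $Y$ the way it acts on $X$ — this needs a small argument.

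Next I would establish the direct-sum decomposition $Y = \bigoplus_{gH \in G/H} gX$ (equivalently $Xg$, once we know $xg=gx$ is replaced by the correct statement). The inclusion $gX \subseteq Y$ is clear, and $X \subseteq Y^{\G}$ is by definition an $H$-intertwiner image landing in $\{y : gy = yg \text{ for all } g\}$: indeed for $y \in X$ and $g \in G$, condition (iii) gives $gyg^{-1} = \sigma_g(y) = y$ since $\sigma_g \in \G$, so $gy = yg$. The substance is to show $Y = \sum_g gX$ and that the sum over coset representatives is direct. For this I would build the averaging/conditional-expectation operator
\[
E := \frac{1}{|\G|} \sum_{\sigma \in \G} \sigma : Y \to Y,
\]
a completely contractive idempotent (well-defined since the $\sigma$ are complete isometries forming a group) with range exactly $Y^{\G} = X$. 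Then, exactly as in the polarization identity after Lemma~\ref{lem:pol-id} and in Proposition~\ref{ifgFP}, for a full set $\{g_k\}$ of coset representatives of $G/H$ I would write, for $y \in Y$,
\[
y = \sum_k g_k \cdot E(g_k^{-1} y),
\]
provided one checks $E(g_k^{-1}y) \in X$ (clear) and that the "off-diagonal" terms vanish; the key computation is that for $g \notin H$, $E(gx) = \frac{1}{|\G|}\sum_\sigma \sigma(g)\sigma(x) = \frac{1}{|\G|}\big(\sum_\sigma \sigma(g)\big)x$, and $\G$-admissibility of $(G,H)$ together with the ergodicity hypotheses (i) — precisely the mechanism of Lemma~\ref{lem:sumzero} — forces this to be $0$. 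Directness of the sum follows by applying $E(g_j^{-1}(-))$ to a vanishing sum $\sum_k g_k x_k = 0$ and using $E(g_j^{-1}g_k x_k)=0$ for $k \neq j$ and $=x_j$ for $k=j$ (the latter using $g_j^{-1}g_j = e$ and $E|_X = \mathrm{id}$; the former using the $\G$-admissibility cancellation again, since $g_j^{-1}g_k \notin H$ for $k \neq j$).

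Having $Y = \bigoplus_{gH} gX$ with $X$ central $H$-modular, $Y$ a $G$-modular operator space with $hv = vh$ for $h \in H$ (which holds on all of $Y$ by centrality of $H$ in $G$ and condition (iii) with $g=h$), and the inclusion $X \hookrightarrow \{v \in Y : gv = vg \ \forall g\}$ a complete isometry and $H$-intertwiner, the pair $(Y, \iota)$ with $\iota : X \hookrightarrow Y$ is an abstract operator space $G$-ification of $X$ in the sense of Definition~\ref{def:abs-Gif}. It is moreover $\G$-reasonable by construction, since condition (ii) provides the required $\G$-action by complete isometries (and one should check this $\G$-action on $Y$ matches the canonical $\Aut_H(G)$-action induced on $X_G$ via the identification $Y \cong X_G$ as linear spaces — this is where the compatibility $\sigma(gy) = \sigma(g)\sigma(y)$ of (ii) does its job, making $\sigma$ act on the summand $gX$ by $gX \to \sigma(g)X$). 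By the uniqueness theorem for abstract $G$-ifications, Theorem~\ref{thm:Gif-unqabs} (applicable since $X$ is ergodic faithful central $H$-modular and $(G,H)$ is $\G$-admissible), we conclude $Y = X_G$.

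\textbf{Main obstacle.} The routine-looking but genuinely delicate points are: (a) verifying faithfulness of the $H$-action on $X = Y^\G$ and, relatedly, that $H$ acts on $Y$ "the same way" it acts on $X$ (so that nothing is lost passing to the fixed-point space) — this is where the hypotheses $\mathrm{Inn}(G) \subseteq \G$ and $H \le Z(G)$ must be combined carefully with the decomposition; and (b) the identification of the externally-given $\G$-action on $Y$ with the intrinsic $\Aut_H(G)$-action on $X_G$, so that "$\G$-reasonable for the given action" is the same notion that the uniqueness theorem consumes. I expect (b) to be the real heart of the argument: one must trace through how $\sigma \in \G$ permutes the coset summands $g_kX$ via $g_k \mapsto \sigma(g_k)$ and confirm this is exactly $\varphi \mapsto \varphi \circ \sigma^{-1}$ under $Y \cong X_G$, which requires the multiplicativity condition (ii) in an essential way and is the reason "balanced" bakes in $\mathrm{Inn}(G)\subseteq\G$ and the subgroup condition on each $L_\sigma$ (the latter feeding the ergodicity hypothesis (i) so that the Lemma~\ref{lem:sumzero}-style cancellations all go through).
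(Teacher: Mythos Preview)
Your overall architecture matches the paper's: take $X=Y^{\G}$ via the averaging idempotent $E=\frac{1}{|\G|}\sum_\sigma\sigma$, prove $E(gE(y))=0$ for $g\notin H$, obtain $Y=\bigoplus_k g_kX$, check the abstract $G$-ification axioms (including $X\subseteq\{y:gy=yg\}$ via condition~(iii)), and invoke Theorem~\ref{thm:Gif-unqabs}. Your directness argument is correct, and your worry~(b) is in fact a one-line check: for $x\in X=Y^{\G}$ condition~(ii) gives $\sigma(gx)=\sigma(g)\sigma(x)=\sigma(g)x$, which is precisely the intrinsic $\Aut_H(G)$-action on $X_G$, so reasonableness is immediate.

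The genuine gap is in the \emph{span} step, and you have misdiagnosed where the ``balanced'' hypothesis enters. From $E(gE(y))=0$ for $g\notin H$ you correctly obtain, with $z:=y-\sum_k g_kE(g_k^{-1}y)$, that $E(g_j^{-1}z)=0$ for every $j$. But this does \emph{not} force $z=0$: the polarization identity after Lemma~\ref{lem:pol-id} is proved by pointwise evaluation of functions in $X_G$, a device unavailable in an abstract $Y$. The paper closes the gap as follows. Summing $|\G|\,g_jE(g_j^{-1}z)=0$ over $j$ and expanding $E$ gives
\[
\sum_{\sigma\in\G}\ \sum_{j=1}^m g_j\,\sigma(g_j)^{-1}\,\sigma(z)\;=\;0 .
\]
For each nontrivial $\sigma\in\G$ the list $(g_j\sigma(g_j)^{-1})_{j=1}^m$ runs over the subgroup $L_\sigma$ with constant multiplicity, so the $L_\sigma$-ergodicity in condition~(i) annihilates that inner sum; the surviving $\sigma=\mathrm{id}$ contribution is $mz$, whence $z=0$. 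This is precisely why ``balanced'' requires each $L_\sigma$ to be a subgroup and why condition~(i) demands $L_\sigma$-ergodicity --- it drives the span, not the identification of $\G$-actions you flag in~(b). Your obstacle~(a) is also not where the work lies: once $Y=\bigoplus_k g_kX$ is established, any $h\in H$ fixing $X$ pointwise fixes each $g_kX$ (since $hg_k=g_kh$) and hence all of $Y$.
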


\begin{proof}
Define $E_0:Y\to Y$ by $E_0(y) :=\frac{1}{|\G|}\sum_{\sigma \in \G} \sigma(y)$ for all $y\in Y$. 
Observe that $E_0$ is an idempotent, and for every $g\notin H$ we have
\begin{align*}
|\G|^2 E_0(gE_0(y)) 
&= 
\sum_{\tau \in \G} \tau\big(g\sum_{\sigma \in \G} \sigma(y)\big)
= \sum_{\tau \in \G}\sum_{\sigma \in \G} \tau\big(g\sigma(y)\big)
\\&= \sum_{\tau \in \G}\tau(g)\sum_{\sigma \in \G} \tau\sigma(y)
= \sum_{\tau \in \G}\tau(g)\sum_{\sigma \in \G}\sigma(y) .
\end{align*}
Similarly to Lemma~\ref{lem:sumzero}, it follows $\sum_{\tau \in \G} \tau(g) =0$, thus $E_0(gE_0(y)) =0$.

Let $\{ g_i : i = 1, \cdots , m \}$ be a full set of mutually inequivalent representatives of the coset space $G/H$ with $g_1=1$.
We show that $y = \sum_{i=1}^m g_iE_0(g_i^{-1}y)$ for every $y\in Y$.
For this, let $z:= y - \sum_{i=1}^m g_iE_0(g_i^{-1}y)$. Then for every $j = 1, \cdots , m$,
\begin{align*}
E_0(g_j^{-1}z)
&= E_0(g_j^{-1}y) - \sum_{i=1}^m E_0(g_j^{-1}g_iE_0(g_i^{-1}y))
\\&= E_0(g_j^{-1}y) - E_0(E_0(g_i^{-1}y))
= 0 .
\end{align*}
It follows $\sum_{\sigma\in\G}\sum_{j=1}^m g_j\sigma(g_j^{-1})\sigma(z) = |\G|\sum_{j=1}^m g_jE_0(g_j^{-1}z) = 0$.
Observe that for every $\sigma\in\G$, the list $(g_j\sigma(g_j^{-1}))_{j=1}^m$ consists of elements of the subgroup $L_\sigma$, each of which appears exactly $[G_\sigma:H]$ times, where $G_\sigma=\{g\in G :\sigma(g)=g\}$.
For every non-trivial $\sigma\in\G$, since $Y$ is $L_\sigma$-ergodic,  
it follows 
$\sum_{j=1}^m g_j\sigma(g_j^{-1})\sigma(z) = [G_\sigma:H] \sum_{k\in L_\sigma} k\sigma(z) = 0$.
This implies, $z = \frac{1}{m}\sum_{\sigma\in\G}\sum_{j=1}^m g_j\sigma(g_j^{-1})\sigma(z) = 0$. 

Let $X:=E_0(Y)$ be the image of $E_0$, which is the fixed point subspace of $\G$-action on $Y$. 
The above shows $Y = \sum_{j=1}^m g_j X$. Then arguing similarly to Lemma~\ref{lem:pol-id}, we get $Y = \oplus_{j=1}^m g_j X$.

By part (iii) of Definition~\ref{def:align}, for every $g\in G$ and $x\in X$, $gxg^{-1} = \sigma_g(x) = x$, which implies that $X$ is contained in the subspace $\{y\in Y : gy=yg\}$.

For $\sigma\in \G$, $x\in X$ and $h\in H$, $\sigma(hx) = \sigma(h) \sigma(x) = hx$, which implies $hx\in X$, and so $X$ is a central $H$-modular operator space.

Thus, denoting by $\iota: X\to Y$ the inclusion map, the pair $(Y, \iota)$ is an abstract $G$-ification of $X$, which is reasonable by part (ii) of Definition~\ref{def:align}. Hence, by Theorem~\ref{thm:Gif-unqabs}, $Y=X_G$ is the reasonable operator space $G$-ification of $X$.
\end{proof}

Note that the canonical reasonable $G$-ification of any ergodic central $H$-modular operator space $X$ satisfies conditions (ii) 
and (iii) of Definition~\ref{def:align} for $\G=\Aut_H(G)$. If $H$ is $\G$-admissible for a $\G\le \Aut_H(G)$ such that $L_\sigma=H$ for every $\sigma \in \G$, then the the canonical reasonable representation also satisfies condition (i) of Definition~\ref{def:align}.
In particular, in this case a $G$-modular operator space $Y$ is the $G$-ification of an $H$-modular operator space $X$ if and only if $Y$ is $(G, H, \G)$-aligned. 

In the case of complexification, $L_\sigma=\bZ_2$ for every $\sigma\in\Aut_{\bZ_2}(\bZ_4)$. An action of $\Aut_{\bZ_2}(\bZ_4)$ on a real 
$\bZ_2$-modular operator space $Y$ is just a degree-2 bijective completely isometric real linear $\theta$ on $Y$, where condition (ii) of Definition~\ref{def:align} translates to $\theta(iy) = -i\theta(y)$. In particular, we recover the well-known fact that a complex operator space $Y$ is the reasonable complexification of a real space $X$ if and only if there exists a conjugate linear degree-2 bijective real complete isometry $\theta$ on $Y$.

Given a group $G$, we continue to use the notation $\sigma_g$ for the inner automorphism of $G$ defined by the element $g\in G$.

\begin{thm}\label{thm:all-G-if}
Let $G$ be finite and $H\le Z(G)$. If ${\rm Inn}(G)$ is balanced for $H\le G$, then every $G$-modular $H$-ergodic real operator space $Y$ is the reasonable $G$-ification of a central $H$-modular real operator space $X$.
\begin{proof}
We show that $Y$ is $(G, H, \G)$-aligned for $\G={\rm Inn}(G)$, hence the result follows by Theorem~\ref{thm:gified-charac}.

For every $g\notin Z(G)$, we have $\{k^{-1}\sigma_g(k) : k\in G\} = \{g\sigma_{k^{-1}}(g^{-1}) : k\in G\}^{-1}$ includes $H$ by the admissibility assumption. 
Thus, condition (i) of Definition~\ref{def:align} is satisfied by $H$-ergodicity.

Define the completely isometric action of $\G$ on $Y$ by $\sigma_g(y) = gyg^{-1}$ for every $g\in G$.  
 For every $g, k\in G$ and $y\in Y$, we have $$\sigma_g(ky) = gkyg^{-1} = gkg^{-1}gyg^{-1} = \sigma_g(k)\sigma_g(y).$$ This is condition
  (ii) of Definition~\ref{def:align}, and (iii) holds by definition of the $\G$-action.
\end{proof}
\end{thm}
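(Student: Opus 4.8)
The plan is to verify that any $G$-modular $H$-ergodic real operator space $Y$ satisfies the three conditions of Definition~\ref{def:align} with the choice $\G = {\rm Inn}(G)$, after which Theorem~\ref{thm:gified-charac} immediately produces the central $H$-modular real operator space $X$ with $Y = X_G$ as a reasonable $G$-ification. By hypothesis ${\rm Inn}(G)$ is balanced for $H \le G$, so in particular $(G,H)$ is $\G$-admissible, ${\rm Inn}(G) \subseteq \G$ trivially, and each $L_{\sigma_g} = \{k^{-1}\sigma_g(k) : k \in G\}$ is a subgroup of $G$; thus the balancedness requirement needed to even apply Theorem~\ref{thm:gified-charac} is met, and only the three ``aligned'' conditions must be checked.

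First I would check condition (i), the ergodicity on $H$ and on each $L_\sigma$ for non-trivial $\sigma$. Ergodicity on $H$ is exactly the $H$-ergodicity hypothesis on $Y$. For a non-trivial $\sigma = \sigma_g \in {\rm Inn}(G)$ (so $g \notin Z(G)$), I would observe that $L_{\sigma_g} = \{k^{-1}gkg^{-1} : k \in G\} = \{g \sigma_{k^{-1}}(g^{-1}) : k \in G\}^{-1} \supseteq H$ by the admissibility of $(G,H)$ with respect to $\G \supseteq {\rm Inn}(G)$ applied to the element $g \in G \setminus H$ (note $g \notin Z(G) \subseteq$ does not immediately give $g\notin H$, but since $H \le Z(G)$ we have $g \notin Z(G) \implies g \notin H$). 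Since $H \subseteq L_{\sigma_g}$, any $L_{\sigma_g}$-fixed vector in $Y$ is in particular $H$-fixed, hence zero by $H$-ergodicity; so $Y$ is $L_{\sigma_g}$-ergodic.

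Next I would define the action of $\G = {\rm Inn}(G)$ on $Y$ by $\sigma_g(y) := g y g^{-1}$, using the $G$-modular (two-sided) structure of $Y$. This is by complete isometry since left and right actions of group elements are complete isometries on a $G$-modular operator space. Condition (ii) is the computation $\sigma_g(ky) = gkyg^{-1} = (gkg^{-1})(gyg^{-1}) = \sigma_g(k)\,\sigma_g(y)$ for $g,k \in G$, $y \in Y$, using associativity of the module actions and the fact that $hv = vh$ in the $G$-modular setting is not needed here — only that conjugation by $g$ respects products. Condition (iii) holds by the very definition of the $\G$-action: for $\sigma_g \in {\rm Inn}(G)$ and $y \in Y$, $\sigma_g(y) = gyg^{-1}$ as required. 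Having verified all three conditions, Theorem~\ref{thm:gified-charac} yields a central $H$-modular operator space $X$ with $Y = X_G$, and this is the reasonable $G$-ification by the concluding assertion of that theorem's proof. The only mild subtlety — and the closest thing to an obstacle — is confirming that the admissibility/balancedness hypotheses transfer correctly to the inner-automorphism group and that $g \notin Z(G)$ indeed places $g$ in $G \setminus H$ so that admissibility applies; everything else is a direct unwinding of the definitions.
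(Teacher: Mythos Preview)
Your proposal is correct and follows essentially the same approach as the paper: define the $\G$-action on $Y$ by conjugation $\sigma_g(y)=gyg^{-1}$, verify condition (i) by showing $H\subseteq L_{\sigma_g}$ via the identity $L_{\sigma_g}=\{g\sigma_{k^{-1}}(g^{-1}):k\in G\}^{-1}$ together with admissibility, and then check (ii) and (iii) by direct computation. Your explicit remark that $g\notin Z(G)\Rightarrow g\notin H$ (since $H\le Z(G)$) is a helpful clarification that the paper leaves implicit.
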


\begin{cor}
Every $Q_8$-modular real operator space $Y$ is the reasonable quaternification of a real operator space $X$. 
\begin{proof}
The change-of-sign action of $H = \bZ_2$ on any real space is ergodic, and as we remarked after Definition~\ref{def:align}, 
$\Gamma = {\rm Inn}(Q_8)$ is balanced for $\bZ_2\le Q_8$. Hence, the result follows immediately from Theorem~\ref{thm:all-G-if}.
\end{proof}
\end{cor}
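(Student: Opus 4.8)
The plan is to recognize this as the special case $G = Q_8$, $H = Z(Q_8)\cong \bZ_2$ of Theorem~\ref{thm:all-G-if}, so that the only work is checking its hypotheses. First I would verify that ${\rm Inn}(Q_8)$ is balanced for $\bZ_2\le Q_8$. The containment $H\le Z(G)$ is immediate since $H = Z(G)$. That $(Q_8,\bZ_2)$ is ${\rm Inn}(Q_8)$-admissible was already recorded after Definition~\ref{def:erg+good}; concretely, for $g$ in $\{i,j,k\}$ and their inverses one checks by a short quaternion multiplication that $g^{-1}\sigma_h(g) = -1$ for a suitable $h\in Q_8$, so $H\subseteq\{g^{-1}\sigma(g):\sigma\in{\rm Inn}(Q_8)\}$ for every $g\in Q_8\setminus H$. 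The inclusion ${\rm Inn}(G)\subseteq\G$ is automatic with $\G={\rm Inn}(G)$, and since ${\rm Inn}(Q_8)\cong\bZ_2\times\bZ_2$ has only four elements a direct computation shows $L_{\rm id}=\{1\}$ and $L_\sigma=\{1,-1\}=H$ for each of the three nontrivial $\sigma$, so every $L_\sigma$ is a subgroup. Hence ${\rm Inn}(Q_8)$ is balanced, and in fact $L_\sigma = H$ for all nontrivial $\sigma$.

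Second, I would observe that for a $Q_8$-modular real operator space the central involution $-1\in Q_8$ acts by $y\mapsto -y$ (this is the convention under which $X_{Q_8}$ is the quaternification, as in Example~\ref{ex:quat}), so the restricted $H$-action is the change-of-sign action, which is faithful and ergodic: the only $H$-fixed vector is $0$. Therefore $Y$ is a $Q_8$-modular $H$-ergodic real operator space, and Theorem~\ref{thm:all-G-if} applies directly, producing a central $\bZ_2$-modular real operator space $X$ with $Y = X_{Q_8}$, the reasonable $Q_8$-ification of $X$. Since the $\bZ_2$-action on $X$ is change of sign, $X_{Q_8}$ is by definition the quaternification $X_\bH$ of $X$ (Example~\ref{ex:quat}), which is the claimed conclusion.

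I do not anticipate any genuine obstacle: all the substance is contained in Theorem~\ref{thm:all-G-if} and its ingredients (Theorem~\ref{thm:gified-charac}, Theorem~\ref{thm:Gif-unqabs}). The only points needing care are the short group-theoretic verification that ${\rm Inn}(Q_8)$ is balanced and the explicit identification of the change-of-sign $\bZ_2$-action with $-1\in Q_8$, which is exactly what makes $Y$ ergodic over $H$ and identifies $X_{Q_8}$ with the quaternification.
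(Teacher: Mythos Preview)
Your proposal is correct and follows essentially the same approach as the paper: both recognize the corollary as the special case $G=Q_8$, $H=\bZ_2=Z(Q_8)$ of Theorem~\ref{thm:all-G-if}, invoke that ${\rm Inn}(Q_8)$ is balanced for $\bZ_2\le Q_8$ (which you verify explicitly and the paper records after Definition~\ref{def:align}), and note that the $\bZ_2$-action is change of sign and hence ergodic. Your version is simply more detailed in spelling out the balanced verification and the identification of $X_{Q_8}$ with the quaternification.
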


One may  define a {\em quaternionic operator space} to be a   $Q_8$-modular real operator space with the canonical left and right action of $\bH$ as in e.g.\ 
Example \ref{61}.  The last result then says that, remarkably, every quaternionic operator space is the quaternification of a real operator space.   (We point out that 
similar sounding statements were shown in \cite{Ng} for the categories of quaternionic $C^*$-algebras, quaternionic Hilbert spaces etc.) 
However a similar phenomenon holds in very many other
situations.  For example:

\begin{cor}
Every $D_4$-modular real operator space $Y$ is the reasonable $D_4$-ification of a real operator space $X$. 
\begin{proof}
One can readily check that ${\rm Inn}(D_4)$ is balanced for $\bZ_2= Z(D_4)\le D_4$. 
Again, since the change-of-sign action of $\bZ_2$ on any real space is ergodic, the result follows from Theorem~\ref{thm:all-G-if},
as in the last proof.
\end{proof}
\end{cor}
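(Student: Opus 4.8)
The plan is to deduce this from Theorem~\ref{thm:all-G-if}, exactly as the preceding two corollaries are deduced from it. So the entire content of the proof is to verify the two hypotheses of that theorem for the pair $H = Z(D_4) = \bZ_2 \le G = D_4$, namely: (a) the change-of-sign action of $\bZ_2$ on any real operator space $Y$ is ergodic, and (b) ${\rm Inn}(D_4)$ is balanced for $\bZ_2 \le D_4$.

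Point (a) is immediate and already noted in the excerpt: if $y \in Y$ is fixed by $x \mapsto -x$ then $y = -y$, so $2y = 0$, so $y = 0$; hence $0$ is the unique fixed vector. Point (b) requires checking the three conditions in the definition of \emph{balanced}: that $(D_4, \bZ_2)$ is ${\rm Inn}(D_4)$-admissible, that ${\rm Inn}(D_4) \subseteq {\rm Inn}(D_4)$ (trivially true), and that for each $\sigma \in {\rm Inn}(D_4)$ the set $L_\sigma = \{ g^{-1}\sigma(g) : g \in G \}$ is a subgroup of $G$. The admissibility is already asserted in the excerpt for $(D_4, Z(D_4))$, and by the remark of Spronk mentioned after Definition~\ref{def:erg+good}, since ${\rm Inn}(D_4)$ contains all inner automorphisms, the set $\bigcup_\sigma L_\sigma$ contains all commutators $[g,k]$; the commutator subgroup of $D_4$ is precisely its center $\bZ_2 = H$, so indeed $H \subseteq \{ g^{-1}\sigma(g) : \sigma \in {\rm Inn}(D_4)\}$ for every $g \in G \setminus H$ (one checks directly that for a non-central $g$, conjugating $g$ by a suitable element produces the non-trivial central element). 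The one genuine verification is that each $L_{\sigma_k}$ is a subgroup: writing $D_4 = \langle r, s \mid r^4 = s^2 = 1, srs = r^{-1}\rangle$, one computes $L_{\sigma_k} = \{ g^{-1} k g k^{-1} : g \in G \}$ (inverted commutators) — for $k = r$ this is $\{1, r^2\}$, for $k = s$ it is $\{1, r^2\}$ again, and more generally each $L_{\sigma_k}$ is either trivial or equals the center $\{1, r^2\}$, which is a subgroup in every case. This is the "one can readily check" step.

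Thus the proof reads: By point (a) the change-of-sign action of $\bZ_2 = Z(D_4)$ on $Y$ is ergodic, and by point (b) ${\rm Inn}(D_4)$ is balanced for $Z(D_4) \le D_4$; hence Theorem~\ref{thm:all-G-if} applies and yields a central $Z(D_4)$-modular real operator space $X$ with $Y = X_G$, this $G$-ification being the reasonable one. I would keep the written proof as terse as the quaternion corollary's, citing Theorem~\ref{thm:all-G-if} and merely recording the two observations, since the detailed group-theoretic bookkeeping for $D_4$ is short and routine.

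I do not anticipate a real obstacle here; the only place where a careless error could creep in is the claim that every $L_{\sigma}$ for $\sigma \in {\rm Inn}(D_4)$ is a subgroup, so that is the step I would actually carry out on paper before writing, but it comes out cleanly because $[D_4, D_4] = Z(D_4)$ has order $2$, forcing each $L_\sigma \subseteq Z(D_4)$ and hence each $L_\sigma \in \{ \{1\}, Z(D_4)\}$, both subgroups.

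\begin{proof}
The change-of-sign action of $H = \bZ_2 = Z(D_4)$ on any real operator space $Y$ is ergodic, since $y = -y$ forces $y = 0$. Moreover ${\rm Inn}(D_4)$ is balanced for $\bZ_2 = Z(D_4) \le D_4$: the pair $(D_4, Z(D_4))$ is ${\rm Inn}(D_4)$-admissible (as recorded after Definition~\ref{def:erg+good}, using that the commutator subgroup of $D_4$ equals its center), ${\rm Inn}(D_4) \subseteq {\rm Inn}(D_4)$ trivially, and for each inner automorphism $\sigma_k$ the set $L_{\sigma_k} = \{ g^{-1}\sigma_k(g) : g \in D_4 \}$ is contained in the commutator subgroup $[D_4, D_4] = Z(D_4)$, which has order $2$; hence each $L_{\sigma_k}$ is either $\{1\}$ or $Z(D_4)$, in either case a subgroup of $D_4$. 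Therefore Theorem~\ref{thm:all-G-if} applies and produces a central $Z(D_4)$-modular real operator space $X$ with $Y = X_G$, the reasonable $D_4$-ification of $X$.
\end{proof}
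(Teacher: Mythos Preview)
Your proof is correct and follows exactly the paper's approach: verify that ${\rm Inn}(D_4)$ is balanced for $\bZ_2 = Z(D_4) \le D_4$, note the $\bZ_2$-action is ergodic, and invoke Theorem~\ref{thm:all-G-if}. You simply flesh out the paper's ``one can readily check'' via the observation $[D_4,D_4] = Z(D_4)$ has order $2$, which forces each $L_{\sigma_k}$ to be $\{1\}$ or $Z(D_4)$; this is precisely the intended routine verification.
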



\section{Examples}\label{sec:ex}

Let $H=\bZ_2 \cong \{ \pm 1 \}$, acting on $X$ as `change of sign'.   In this case $G(\bR) \subset M_m(\bR)$  is an $m$ dimensional real $C^*$-algebra 
(where $m = |G|/2$), and $X_G \cong X \otimes_{\rm min} G(\bR)$.   Nearly all the examples below are of this latter form.    We leave the details in these examples to the reader.  Computing the matrix form of $X_G$ amounts to using the group multiplication table to write each  $g_i g_j^{-1}$ as $h g_k$, for  a complete set $\{ g_n \}$ of representatives for the coset space $G/H$ with $g_1 = 1$, and $h \in H$.   See Section \ref{marp}.

\subsection{Complexification}
Let $H=\bZ_2$ and $G=\bZ_4$. As seen in Example~\ref{ex:comp}, if $X$ is a real operator space and $\bZ_2\act X$ by change of sign $x\mapsto -x$, then $X_G = X_c$ is the complexification of $X$.
 The reasonable operator space norm on $X_G = X_c$ is defined via its matrix form (see Section \ref{marp})
$$\Big\{ \begin{bmatrix}
       a    & -b \\
       b & a 
\end{bmatrix} \,:\, a,b \in X \Big\}.$$ 

\subsection{Quaternification} \label{ExQ} 
Let $H=\bZ_2$ and $G=Q_8$. As seen in Example~\ref{ex:quat}, if $X$ is a real operator space and $\bZ_2\act X$ by change of sign $x\mapsto -x$, then $X_G = X_\bH$ is the quaternification of $X$.
 The reasonable operator space norm on $X_G = X_\bH$ is defined via its matrix form (see Section \ref{marp})
$$\Big\{ \begin{bmatrix}
       a    & -b & -c & -d \\
       b & a & -d  & c \\
      c &  d  & a    & -b \\
      d  & -c & b & a
\end{bmatrix} \,:\, a,b,c,d \in X \Big\}.$$
In this case $\bR_G = \bH$ is a real $C^*$-algebra, real operator space, and real Hilbert space with its natural norm and real inner product (isometric to $l^2_4(\bR$)).

\subsection{Further examples}      
Let $H=\bZ_2 \cong \{ 1, r^2 \}$ where $r$ has order $4$ in $D_4$, and $G=D_4$. Consider the action of $\bZ_2$ on $X$ defined by $x\mapsto -x$. Then $X_G$ has matrix form (see Section \ref{marp})
$$\Big\{ \begin{bmatrix}
       a    & -b & c & d \\
       b & a & d  & -c \\
      c &  d  & a    & -b \\
      d  & -c & b & a
\end{bmatrix} \,:\, a,b,c,d \in X \Big\}.$$ 
  Note how similar this is to the matrix form for the quaternions. 
    
Similarly for the pair $(\bZ_8,\bZ_2)$, $X_G$ has matrix form 
$$\Big\{ \begin{bmatrix}
       a    & -d & -c & -b \\
       b & a & -d  & -c \\
      c &  b  & a    & -d \\
      d  & c & b & a
\end{bmatrix} \,:\, a,b,c,d \in X \Big\},$$
which is (not obviously)  completely isometric to the matrix space obtained by induction in the stages $(\bZ_8,\bZ_4)$ and $(\bZ_4,\bZ_2)$.
    
For the nonadmissible pair $(\bZ_6,\bZ_2)$ on the other hand, $X_G$ has matrix form 
    $$\Big\{ \begin{bmatrix}
       a    & -c &  -b \\
       b & a & -c \\
      c &  b  & a   
    \end{bmatrix} \,:\, a,b,c \in X \Big\}.$$  The subspace corresponding to setting $b = -c$ is the fixed point space of the two-member 
group Aut$_H(G)$, from which it is easy to see that  this is not a reasonable G-fication of its fixed point space.  It is also easy to see from this that several of our earlier lemmas and propositions requiring $(G,H)$ admissible fail for this example.

For a (real or complex) Hilbert  space the $G$-ification  is not a Hilbert space  isometrically except in a few known cases.  For example if $\cH$ is one dimensional and $H = \{ 1, -1 \}$ then $H^{\rm col}_G$  is a unital finite dimensional
$C^*$-algebra.   The only such that are also isometrically Hilbert spaces are known to be isomorphic to $\bR, \bC,$ and the quaternions \cite{Ing}. 
The complexification or quaternification of a real Hilbert column space is a Hilbert space.  More generally, it seems possible that
for example for certain 
groups generated by anticommuting anti-symmetric unitaries,  the $G$-ification of a Hilbert column space may yield new operator spaces of interest in 
quantum physics.  This is related to the subject of spin systems, and anticommutator relations 
in quantum physics (using ideas in the Remark on p.\ 174 in \cite{Pisbk}, multiplying those unitaries by $i$).

\end{document}